\theoremstyle{plain}
\newtheorem{theorem}{Theorem}[section]
\newtheorem{lemma}[theorem]{Lemma}
\newtheorem{proposition}[theorem]{Proposition}
\newtheorem{corollary}[theorem]{Corollary}
\theoremstyle{definition}
\newtheorem{definition}[theorem]{Definition}
\theoremstyle{remark}
\newtheorem{remark}[theorem]{Remark}
\begin{document}

   \title{On smooth and isolated curves in general complete intersection Calabi-Yau threefolds}

\author{Xun Yu}
\address{Department of Mathematics, The Ohio State University, Columbus, OH, 43210-1174, USA}
\email{yu@math.ohio-state.edu}

\begin{abstract}
Recently Knutsen found criteria for the curves in a complete linear system $|\mathcal{L}|$ on a smooth surface $X$ in a nodal K-trivial threefold $Y_0$ to deform to a scheme of finitely many smooth isolated curves in a general deformation $Y_t$ of $Y_0$. In this article we develop new methods to check whether  the set of nodes of $Y_0$ imposes independent conditions on $|\mathcal{L}|$. As an application, we find new smooth isolated curves in complete intersection Calabi-Yau threefolds.

\end{abstract}

\maketitle
\section*{0. Introduction}

Calabi-Yau threefolds $Y$ have two related and interesting properties: a) $Y$ is unobstructed; b) the expected dimension of the deformation space of any l.c.i. curve lying in $Y$ is zero.

However, it is difficult to show even the existence of a rational curve of given degree and genus on $Y$, let alone that such a curve is geometrically rigid. Thus, there is interest in measuring the known families of geometrically rigid curves of given degree and genus on general $Y$. For genus zero it is known that there exist rigid rational curves of given degree on the general member of many complete intersection Calabi-Yau (CICY) threefolds $Y$. For higher genus Knutsen has provided many examples in [5]. Knutsen's technique is to construct a curve $C$ of given degree and genus lying in a linear system on a smooth surface $X$ lying in a nodal complete intersection Calabi-Yau threefold $Y_0$. He then uses deformation theory to show that only a finite number of the $C$ deform when $Y_0$ is deformed generally. 

In [5] he lists and proves a set of conditions sufficient to ensure this construction.

To introduce Knutsen's criterion, we first state the assumptions.

$\bold{Setting \; and \;assumptions}.$ Let $P$ be a smooth projective variety of dimension $r\geq 4$ and $\mathcal{E}$ a vector bundle of rank $r-3$ on $P$ that splits as a direct sum of line bundles $\mathcal{E}=\oplus_{i=1}^{r-3}\mathcal{M}_i$.

Let  $s_0=s_{0,1}\oplus ...\oplus s_{0,r-3}\in H^0(P, \mathcal{E})=\oplus_{i=1}^{r-3}H^0(P, \mathcal{M}_i)$ be a regular section, where $s_{0,i}\in H^0(P, \mathcal{M}_i)$ for $i=1,..., r-3$. Set $Y=Z(s_0)$ and $Z=Z(s_{0,1}\oplus ...\oplus s_{0, r-4})$\\ (where $Z=P$ if $r=4$).

Let $X\subset Y$ be a smooth , regular surface (i.e. $H^1(X, \mathcal{O}_X)=0$) and $\mathcal{L}$ a line bundle on $X$. 

We make the following additional assumptions:\\ (A1) $Y$ has trivial canonical bundle; \\(A2) $Z$ is smooth along $X$ and the only singularities of $Y$ which lie in $X$ are $l$ nodes $\xi_1, ..., \xi_l$. Furthermore $$ l\geq dim|\mathcal{L}+2|;$$\\ (A3) $|\mathcal{L}|\neq\emptyset$ and the general element of $|\mathcal{L}|$ is a smooth, irreducible curve;\\ (A4) for every $\xi_i\in S:=\{\xi_1,...,\xi_l\},$ if $|\mathcal{L}\otimes \mathcal{J}_{\xi_i}|\neq \emptyset$, then its general member is nonsingular at $\xi_i$; \\ (A5) $H^0(C, \mathcal{N}_{C/X})\cong  H^0(C, \mathcal{N}_{C/Y})$ for all $C\in |\mathcal{L}|$;\\ (A6) $H^1(C, \mathcal{N}_{C/P})=0$ for all $C\in |\mathcal{L}|$;\\ (A7) the image of the natural restriction map $$H^0(P, \mathcal{M}_{r-3})\longrightarrow H^0(S, \mathcal{M}_{r-3}\otimes\mathcal{O}_S)\cong \mathbb{C}^l$$

has codimension one.

Then Knutsen's criterion is the following:
\begin{theorem}

([5, Theorem1.1]). Under the above setting and assumptions (A1)-(A7), the members of $|\mathcal{L}|$ deform to a length 
\[\left( \begin{array}{c}
l-2  \\
 dim|\mathcal{L}| \end{array} \right) \] 
scheme of curves that are smooth and isolated in the general deformation $Y_t=Z(s_0+ts)$ of $Y_0=Y$. In particular, $Y_t$ contains a smooth, isolated curve that is a deformation of a curve in $|\mathcal{L}|$.

\end{theorem}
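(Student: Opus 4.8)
The plan is to follow the strategy of Clemens and Kley: build an explicit one–parameter smoothing $\mathcal{Y}\to\Delta$ of $Y_0=Y$, decide order by order which members of $|\mathcal{L}|$ survive the smoothing, and check that the survivors are smooth and rigid in the general fibre. \emph{Constructing the family.} Choose $s\in H^0(P,\mathcal{E})$ general and put $\mathcal{Y}=Z(s_0+ts)\subset P\times\Delta$, $Y_t=Z(s_0+ts)$. Since $Z$ is smooth along $X$ by (A2), near $X$ only the deformation of the last factor $s_{0,r-3}$ matters, and (A7) says that the values of $H^0(P,\mathcal{M}_{r-3})$ at the $l$ nodes fill out exactly a hyperplane in $\mathbb{C}^l$; hence for general $s$ the local smoothing parameters $f_1,\dots,f_l$ at $\xi_1,\dots,\xi_l$ are all nonzero (so each node is smoothed and the total space $\mathcal{Y}$ is smooth there) while satisfying exactly one forced linear relation. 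A standard Bertini/dimension argument then gives that $\mathcal{Y}$ is smooth and that $Y_t$ is a smooth Calabi–Yau threefold for $0<|t|\ll 1$.

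\emph{Which curves lift.} Fix $C=Z(\sigma)\in|\mathcal{L}|$ with $\sigma\in H^0(X,\mathcal{L})$. The obstruction to extending $C\subset Y_0$ to a flat family $C_t\subset Y_t$ is localized at the nodes lying on $C$. In local coordinates at such a node, where $Y_0=\{xy=zw\}$, $X=\{x=z=0\}$ and the family is $\{xy-zw=tf_i+\cdots\}$, the surface $X$ is a non–Cartier Weil divisor through the node, and a direct computation expresses the first–order obstruction along $C$ in terms of the numbers $\sigma(\xi_i)$ and $f_i$. Assembling over the nodes and iterating in $t$ — the higher–order obstructions being killed by the vanishing coming from (A5)–(A6) together with the adjunction $\chi(\mathcal{N}_{C/Y_t})=0$ on the CY threefold $Y_t$ — one shows that $C$ deforms into $Y_t$ precisely when the evaluation vector $(\sigma(\xi_1),\dots,\sigma(\xi_l))$ satisfies an explicit algebraic condition $\Phi$ determined by the $f_i$ and by the functional $\lambda$ annihilating the image in (A7).

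\emph{Smoothness, rigidity, and the count.} For a lifting curve, compare $\mathcal{N}_{C/Y_t}$ with $\mathcal{N}_{C/P}$ through the Koszul resolution of $\mathcal{E}|_C$: (A6) gives $H^1(\mathcal{N}_{C/P})=0$, while (A5) together with (A3), (A4) identifies $H^0(\mathcal{N}_{C/Y_t})$ with the expected space and forces the limit curve to be smooth at every $\xi_i$. Since $\chi(\mathcal{N}_{C/Y_t})=0$, the vanishing $H^1(\mathcal{N}_{C/Y_t})=0$ that falls out of the same bookkeeping yields $H^0(\mathcal{N}_{C/Y_t})=0$, i.e. the lift is smooth, isolated and unobstructed in $Y_t$. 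Finally, the locus $W\subset|\mathcal{L}|=\mathbb{P}^{\dim|\mathcal{L}|}$ of members that lift is the zero scheme of $\Phi$ pulled back along the evaluation map $H^0(X,\mathcal{L})\to\mathbb{C}^l$; using (A2) in the form $l\ge\dim|\mathcal{L}+2|$ and the codimension–one statement of (A7), a direct intersection computation identifies $W$ with a zero–dimensional scheme of length $\binom{l-2}{\dim|\mathcal{L}|}$, which is the asserted count, and the corresponding curves in $Y_t$ are the desired smooth isolated curves.

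\emph{Main obstacle.} The crux is the middle step: pinning down the correct local obstruction term at a node, verifying that every higher–order obstruction vanishes along the curves passing the first–order test, and turning the resulting condition $\Phi$ into the clean combinatorial number $\binom{l-2}{\dim|\mathcal{L}|}$. This is exactly the place where the numerical hypotheses (A2) and (A7) enter essentially; by comparison the cohomological vanishings (A5), (A6) are routine once the Koszul and normal–bundle sequences are set up.
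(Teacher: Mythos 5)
The first thing to note is that the paper does not prove this statement at all: Theorem 0.1 is quoted verbatim from Knutsen ([5, Theorem 1.1]) and is used as a black box throughout; its proof lives in [5], building on Clemens--Kley [3] and Kley [7]. So there is no in-paper argument to compare you against, and your attempt has to be judged against the known proof.

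Measured that way, your outline identifies the correct strategy --- deform $Y_0$ inside the family $Y_t=Z(s_0+ts)$, localize the obstruction to deforming $C\in|\mathcal{L}|$ at the nodes, and count the survivors --- but the decisive steps are asserted rather than carried out, as you yourself concede in your closing paragraph. Concretely: (i) the ``direct computation'' of the local obstruction at a node, and the claim that all higher-order obstructions vanish for curves passing the first-order test, is precisely the hard content; in the actual argument the surviving curves are pinned down as the members of $|\mathcal{L}|$ through suitable subsets of $\dim|\mathcal{L}|$ of the nodes (this is where (A4) and the independence of node conditions encoded in (A5)/(A5$'$) enter), not by an order-by-order iteration whose convergence you never address; (ii) the ``explicit algebraic condition $\Phi$'' is never written down, so the ``direct intersection computation'' producing $\binom{l-2}{\dim|\mathcal{L}|}$ has no starting point --- the $-2$ comes from the codimension-one condition in (A7) together with a normalization, and obtaining a scheme of exactly that length (each admissible choice of nodes contributing one reduced point, with the limiting curve smooth) needs the independence statement and (A6), none of which is verified here; (iii) the assertion that $H^1(\mathcal{N}_{C_t/Y_t})=0$ ``falls out of the same bookkeeping,'' giving isolatedness via $\chi=0$, is unsupported --- that rigidity/semicontinuity step is a separate argument in [5]. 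As it stands your proposal is a plan for a proof in the right spirit, with the core of Knutsen's argument still missing.
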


\vspace{2 mm}

Using Knutsen's criterion, we find some new smooth and isolated curves in general Calabi-Yau complete intersection (CICY) threefolds in this paper. 


To apply Theorem 0.1, we need to choose appropriate surfaces $X$ and line bundles $\mathcal{L}$ on $X$. Then we need to show that we can find a nodal Calabi-Yau threefold $Y$ containing $X$ such that all the  conditions (A1)-(A7) are satisfied. [5, Proposition 4.3] shows that under certain assumptions,  (A5) is equivalent to (A5'). Actually, all cases in this paper satisfy those assumptions and we will always check (A5') instead of (A5). (A5') consists of two parts:

\vspace{1mm}

$(1)$ The set of nodes $S$ imposes independent conditions on $|\mathcal{L}|$; ($\star$)

$(2)$  The natural map $\gamma_{C} :H^0(C, \mathcal{N}_{X/Y}\otimes \mathcal{O}_C) \longrightarrow H^1(C, \mathcal{N}_{C/X})$(cf.[5] (4.4)) is an isomorphism for all $C\in |\mathcal{L}|$.

\vspace{1mm}

The part (1) of (A5') is critical to this paper, and we denote it as $(\star)$. 

In \S 1 we develop the tools we will need to prove $(\star)$ in our cases. In \S 2 we treat cases in which $X$ is a K3 surface. Finally in \S 3 we treat the cases in which $X$ is a rational surface.

\vspace{2mm}

Our main results are Theorem 2.9, Theorem 2.12 and Theorem 3.1. 

\vspace{2mm}
$\mathbf{Theorem \;2.9}$: Let $d\geq 1$ and $g\geq 0$ be integers. Then in any of the following cases the general Calabi-Yau complete intersection threefold $Y$ of a particular type contains an isolated, smooth curve of degree $d$ and genus $g$:

\vspace{1mm}

(a) $Y=(5)\subset \mathbb{P}^4:  g=23$ and $d=18$; $g=24$ and $d=19$; $g=26$ and $d=20$; $g=27$ and $d=20$; $g=29$ and $d=21$.

(b)  $Y=(4,2)\subset \mathbb{P}^5:$ $g=16$ and $17\le d\le 19$; $g=17$ and $17\le d\le 20$; $g=18$ and $d=20$; $g=19$ and $d=18,20 \; and\; 21$; $g=20$ and $d=19, 20$; $g=21$ and $d=20, 21$; $g=22$ and $d=20, 21$; $g=23$ and $d=20, 22$; $g=25$ and $d=21, 22$; $g=26$ and $d=22$; $g=27$ and $d=22$; $g=29$ and $d=23$.

(c) $Y=(3,3)\subset \mathbb{P}^5: g=8$ and $d=12$.

(d) $Y=(2,2,3 )\subset \mathbb{P}^6: g=11$ and $d=16$.

(e) $Y=(2,2,2,2 )\subset \mathbb{P}^7:$ $g=4$ and $d=9$; $g=5$ and $d=10, 11$.

\vspace{2mm}

$\mathbf{Theorem \;2.12}$: Let $d\geq 1$ and $g\geq 0$ be integers. Then in any of the following cases the general Calabi-Yau complete intersection threefold $Y$ of a particular type contains an isolated, smooth curve of degree $d$ and genus $g$:

\vspace{2mm}

(a) $Y=(5)\subset \mathbb{P}^4:  g=23$ and $d=19$; $g=24$ and $d=20$; $g=25$ and $d=19, 20$.

(b)  $Y=(4,2)\subset \mathbb{P}^5:$ $g=16$ and $d=20$; $g=18$ and $d=18, 19$; $g=19$ and $d=19$; $g=20$ and $d=21$; $g=21$ and $d=19$; $g=23$ and $d=21$;  $g=24$ and $d=21, 22$; $g=27$ and $d=23$; $g=28$ and $d=23$.

(c) $Y=(2,2,3 )\subset \mathbb{P}^6: g=4$ and $d=8$.

(d) $Y=(2,2,2,2 )\subset \mathbb{P}^7:$ $g=4$ and $d=10$; $g=6$ and $d=11$.

\vspace{1mm}

In order to use Theorem 0.1 to prove Theorem 2.9 and Theorem 2.12, the surfaces $X$ are complete intersection $K3$ surfaces with $PicX=\mathbb{Z}H\oplus\mathbb{Z}C$. (cf.[6, Theorem 1.1]), where $H$ is the hyperplane section of $X$ and $C$ is a smooth irreducible curve of desired genus and degree. The K3 surfaces used in the proof of Theorem 2.9 do not have -2 divisors, but the K3 surfaces used in the proof of Theorem 2.12 have -2 divisors. Furthermore, we define the line bundle $\mathcal{L}$ to be $\mathcal{O}_X(C)$.  Let $S$ be nodes of general CICY $Y$ containing $X$. By the construction of $X$ and $Y$,  $S=A\cap B$, where $(A,B)$ is a general member of  $|\mathcal{O}_X(a)|\times |\mathcal{O}_X(b)|$ and $a\leq b$. The integers $a$ and $b$ vary case by case. The good news is that for all cases in Theorem 2.9 and Theorem 2.12, all the conditions [5] (A1)-(A7) except the part (1) of (A5'), say ($\star$), can be easily verified by using results in [5, \S6 \& \S7] (Lemma 6.1, Lemma 6.2, Prop.6.5 and the proof of Prop 7.2). The bad news is that ($\star$) is hard to verify. Actually, in [5, Proposition 7.2] Knutsen uses  the  condition (7.2) to guarantee ($\star$), but all cases in Theorem 2.9 and Theorem 2.12 do not satisfy the condition (7.2). For this reason, in \S 1 and Appendix A we develop two new methods which can be used to verify ($\star$). 

Actually, we can appy Corollary 1.8 to show $(\star)$ for all cases in Theorem 2.9 and Theorem 2.12. In order to apply Corollary 1.8, we will need to show that $H^1(X, \mathcal{L}(-a-b))=0$. If the K3 surfaces $X$ do not have -2 divisors like in Theorem 2.9, we can just apply Lemma 2.8 to show  $H^1(X, \mathcal{L}(-a-b))=0$. Roughly speaking, Lemma 2.8 gives a numerical criterion for the vanishing of the first cohomology group of line bundles on a K3 surface without -2 divisors. Essentially, Lemma 2.8 comes from the fact that on a K3 surface without -2 divisors all complete linear system are base points free. On the other hand, if the K3 surfaces $X$ have -2 divisors, we first compute the closed cone of curves $\overline{NE(X)}$ and the nef cone $Nef(X)$ and then we can prove $H^1(X, \mathcal{L}(-a-b))=0$ by using the information about these cones.

\vspace{2mm}

$\mathbf{Theorem \; 3.1}$: Let $d\geq 1$ and $g\geq 0$ be integers. Then in any of the following cases the general Calabi-Yau complete intersection threefold $Y$ of a particular type contains an isolated, smooth curve of degree $d$ and genus $g$:

(i) $Y=(3,3)\subset \mathbb{P}^5: g=3$ and $d=6$.

(ii) $Y=(2,4)\subset \mathbb{P}^5: g=3$ and $d=6$.

\vspace{2mm}
The proof of Theorem 3.1 is a quite different story since in this case we use complete intersection rational surfaces instead of complete intersection K3 surfaces. For case (i) of Theorem 3.1 we choose $X=\mathbb{P}^2$ blow-up at six general points and $\mathcal{L}=\mathcal{O}_X(H+l)$, where the hyperplane class $H=$proper transform of a cubic through the six points and $l$ is the pull-back of $\mathcal{O}_{\mathbb{P}^2}(1)$; for case (ii) we choose $X=(2)\subset\mathbb{P}^3, \mathcal{L}=(2,4)\in \mathbb{Z}\times\mathbb{Z}\cong Pic\;X$.

\vspace{2mm}
To prove Theorem 3.1, we will also check all conditions (A1)-(A7) for each case. Notice that all these rational surfaces used are Fano varieties,  which makes verifying the conditions (A1)-(A7) much easier since some cohomology groups in question vanish by applying Kodaira Vanishing to -$K$.

(A1) and (A2) are easily verified. Because the line bundles $\mathcal{L}$ are very ample, (A3) and (A4) are easily verified. The way to check (A7) is similar to [5, Lemma 6.2]. The way to check (A6) is similar to certain parts of the proof of [5, Proposition 7.2]. Like before we check (A5') instead of (A5). The second part of (A5') is easily checked since it's easy to show $H^0(C,\mathcal{N}_{X/Y}\otimes\mathcal{O}_C)\cong H^1(C, \mathcal{N}_{C/X})=0,\forall C\in|\mathcal{L}|$. In order to show $(\star)$, we just apply Corollary 1.8.

Because this paper cites many results from reference [5] and some of them are not described here, the reader probably need a copy of  reference [5] in order to read the rest of this paper.

In the Appendix C, a summary for the existence of smooth isolated curves in general CICY threefolds known so far is given by putting results in [5] and this paper together.

\subsection*{Acknowledgments} I would like to thank my advisor Herb Clemens for his insight, advice and continuous support. This paper grew out of numerous conversations with him. I would also like to thank Andreas Knutsen for reading the first draft of this paper and for giving very helpful comments. One conversation with David Morrison was also very helpful, and is gratefully acknowledged.

\section{DEFINITIONS AND LEMMAS}

It is convenient to consider the first half of [5] (A5') i.e. ``the nodes $S$ imposes independent conditions on $|\mathcal{L}|$'', say ($\star$),  in a general setting.

\vspace{2 mm}
Let $X$ be a smooth surface in projective space and let $\mathcal{L}$ be a line bundle on $X$ such that $h^0(X,\mathcal{L})>1$. Let $n=h^0(X, \mathcal{L}).$

\begin{definition}
Let $S$ be a reduced 0-cycle on $X$. We say $S$ imposes independent conditions on $|\mathcal{L}|$ if $\forall C\in|\mathcal{L}|$, the natural evaluation map $H^0(X, \mathcal{L})\longrightarrow H^0(S\cap C, \mathcal{O}_{S\cap C})$ is surjective. 
\end{definition}

Clearly,  $S$ imposes independent conditions on $|\mathcal{L}|$ if and only if  $\forall$ subset $D\subset S$, the natural restriction map $H^0(X, \mathcal{L})\longrightarrow H^0(D, \mathcal{L}|_{D})$ of maximal rank.

\begin{remark}
Note that if $S$ imposes independent conditions on $|\mathcal{L}|$, then, in particular, the points in $S$ are different from the possible base points of $|\mathcal{L}|$, so that the locus of curves in $|\mathcal{L}|$ passing through at least one point of $S$ is an effective divisor in $|\mathcal{L}|$ consisting of hyperplanes. Therefore the condition that  $S$ imposes independent conditions on $|\mathcal{L}|$ can be rephrased as saying that the locus of curves in $|\mathcal{L}|$ passing through at least one point of $S$ is an effective, simple normal crossing divisor consisting of hyperplanes.

If the cardinality $|S| >n$, then  the condition that $S$ imposes independent conditions on $|\mathcal{L}|$ is equivalent to the condition that at most $dim|\mathcal{L}|=n-1$ points of $S$ can lie on an element of $|\mathcal{L}|$.
\end{remark}

For positive integers $a\le b$ let $$l=c_1(\mathcal{O}_X(a))\cdot c_1(\mathcal{O}_X(b)).$$

We  will assume through out that
\begin{equation}
l>n \; \text{and} \; H^0(X,\mathcal{L}(-a))=0.
\end{equation}

\begin{definition}
Let $A\in |\mathcal{O}_X(a)|$ be a smooth irreducible curve. We say the pair $(A, \mathcal{O}_X(b))$ can impose independent conditions on $|\mathcal{L}|$ if there exists $B\in |\mathcal{O}_X(b)|$ such that $B\cap A$ is a set of $l$ distinct points and $B\cap A$ imposes independent conditions on $|\mathcal{L}|.$
\end{definition}

\begin{remark}
 In later sections, the surface $X$ will be a smooth and complete intersection surface (K3 or rational). The nodes $S$ of a general CICY threefold $Y$ containing $X$  will be a complete intersection on $X$, say $S=A\cap B$ where $A\in |\mathcal{O}_X(a)|$ and $B\in  |\mathcal{O}_X(b)|$. The positive integers $a$ and $b$ are determined by complete intersection types of $X$ and $Y$. (cf. Table 1 in section 2).  We want to show $S$ imposes independent conditions on the chosen complete linear system $|\mathcal{L}|$ on $X$. Actually, our goal is to show that there is a dense subset $\mathcal{U}$ of $|\mathcal{O}_X(a)|\times|\mathcal{O}_X(b)|$ such that for any $(H_0,H_1)\in \mathcal{U}$ the intersection $H_0\cap H_1$ is a reduced 0-cycle on $X$ and it imposes independent conditions on $|\mathcal{L}|$.  Clearly  this is  reduced to show the following statement:  for any fixed smooth irreducible member $A_0\in |\mathcal{O}_X(a)|$ there exists a member $B\in |\mathcal{O}_X(b)|$ such that $A_0\cap B$ is a set of $l$ distinct points that impose independent conditions on $|\mathcal{L}|$. This simple observation motivates the above definition, and this reduction will allow us to bring to bear the classical theory of divisors on a Riemann surface.
 
 \end{remark}

 From now on, let $A_0\in |\mathcal{O}_X(a)|$ be any fixed irreducible smooth curve.

In this paper two different new methods that can be used to show ``the pair $(A_0, \mathcal{O}_X(b))$ can imposes independent conditions on $|\mathcal{L}|$. '' will be presented. The first one, that we call Method I,  is a sort of generalization of Knutsen's method in [5] (cf. [5, Lemma 6.3]). The second one, Method II, is completely new. For application purposes, in all cases in Theorem 2.9, Theorem 2.12 and Theorem 3.1 of this paper we apply Method I to show $(\star)$ without using Method II at all. Some cases in Theorem 2.9, Theorem 2.12 and Theorem 3.1 also follow from Method II but the others do not. However, theoretically it is  possible that in certain situations we can not apply Method I (for example, the condition i) of Corollary 1.8 is not satisfied.) but we can still apply Method II. It is hoped that Method II can find applications somewhere else. Because we only use Method I in \S 2 and \S 3, Method II is put in the Appendix A.

\subsection{Method I}

\begin{proposition}
 Define $W:=\{(D, B)\in A_{0}^{(n)}\times | \mathcal{O}_{A_0}(b)|| D\subset B \}$, where $A_{0}^{(n)}$ is the n-th symmetric product of $A_0$. We assume $H^1(X, \mathcal{O}_{X}(b-a))=0$. Then $(A_0,\mathcal{O}_X(b))$ can impose independent conditions on $|\mathcal{L}|$ if the following two conditions are satisfied:

 i) $W$ is irreducible;

ii) $\exists B_0\in |\mathcal{O}_{A_0}(b)|, \ni$ the following restriction map is injective $$ H^0(X,\mathcal{L})\longrightarrow H^0(B_0, \mathcal{L}\otimes\mathcal{O}_{B_0})$$

\end{proposition}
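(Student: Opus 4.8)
The plan is to transport the whole question to the fixed curve $A_0$ and then combine a semicontinuity (degeneracy-locus) argument with the irreducibility hypothesis i). First I would record that $H^1(X,\mathcal{O}_X(b-a))=0$ makes the restriction $H^0(X,\mathcal{O}_X(b))\to H^0(A_0,\mathcal{O}_{A_0}(b))$ surjective, via the exact sequence $0\to\mathcal{O}_X(b-a)\to\mathcal{O}_X(b)\to\mathcal{O}_X(b)\otimes\mathcal{O}_{A_0}\to 0$ (note $\mathcal{J}_{A_0}=\mathcal{O}_X(-a)$ since $A_0\in|\mathcal{O}_X(a)|$). Hence every $B\in|\mathcal{O}_{A_0}(b)|$ — a divisor of degree $c_1(\mathcal{O}_X(a))\cdot c_1(\mathcal{O}_X(b))=l$ on $A_0$ — is cut out on $A_0$ by some $\widetilde B\in|\mathcal{O}_X(b)|$ with $A_0\not\subset\widetilde B$, so that $\widetilde B\cap A_0=B$ scheme-theoretically. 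By Definition 1.3 it therefore suffices to produce a single reduced $B\in|\mathcal{O}_{A_0}(b)|$, i.e. a set of $l$ distinct points of $A_0\subset X$, that imposes independent conditions on $|\mathcal{L}|$; and since $l>n=h^0(X,\mathcal{L})$, by Remark 1.2 this is equivalent to asking that no degree-$n$ subdivisor $D\subset B$ lie on a member of $|\mathcal{L}|$, i.e. that for every such $D$ the evaluation map $\mathrm{ev}_D\colon H^0(X,\mathcal{L})\to H^0(D,\mathcal{L}\otimes\mathcal{O}_D)$ be injective (hence an isomorphism, both sides having dimension $n$).

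With this reformulation in hand I would set $Z:=\{(D,B)\in W : \mathrm{ev}_D\text{ is not injective}\}$. Globalizing $\mathrm{ev}$ over the universal degree-$n$ divisor on $A_0^{(n)}$ realizes $\{D\in A_0^{(n)} : \mathrm{ev}_D\text{ not injective}\}$ as the degeneracy locus of a morphism between two vector bundles of rank $n$, so it is closed, and pulling back along $W\to A_0^{(n)}$ shows $Z$ is closed in $W$. I would also observe that the second projection $p_2\colon W\to|\mathcal{O}_{A_0}(b)|$ is proper ($W$ is projective), surjective (an effective divisor of degree $l>n$ always contains a degree-$n$ subdivisor), and generically finite (over a reduced $B$ the fibre is the finite set of the $\binom{l}{n}$ degree-$n$ subdivisors of $B$); as $W$ is irreducible by i), this forces $\dim W=\dim|\mathcal{O}_{A_0}(b)|$.

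The heart of the matter is to verify that $Z\neq W$, and here hypothesis ii) is used. The set of $B\in|\mathcal{O}_{A_0}(b)|$ for which $H^0(X,\mathcal{L})\to H^0(B,\mathcal{L}\otimes\mathcal{O}_B)$ is injective is open (it is where a vector-bundle map attains its maximal rank $n$) and nonempty by ii), hence dense; and since $\mathcal{O}_{A_0}(b)$ is base-point free (it is the restriction of the very ample $\mathcal{O}_X(b)$), the reduced members of $|\mathcal{O}_{A_0}(b)|$ form a dense open set as well. Intersecting, I may take $B_0$ reduced with $H^0(X,\mathcal{L})$ injecting into $H^0(B_0,\mathcal{L}\otimes\mathcal{O}_{B_0})\cong\mathbb{C}^l$; its image is an $n$-dimensional subspace, so the associated $n\times l$ matrix has rank $n$ and hence an invertible $n\times n$ submatrix, and the $n$ points of $B_0$ indexing its columns form a subdivisor $D_0\subset B_0$ with $\mathrm{ev}_{D_0}$ injective. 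Thus $(D_0,B_0)\in W\setminus Z$, so $Z\subsetneq W$.

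To conclude: since $W$ is irreducible and $Z\subsetneq W$ is closed, $\dim Z<\dim W=\dim|\mathcal{O}_{A_0}(b)|$, so by properness $p_2(Z)$ is a proper closed subset of $|\mathcal{O}_{A_0}(b)|$. Intersecting its complement with the dense open locus of reduced divisors gives a reduced $B$ with $B\notin p_2(Z)$, i.e. every degree-$n$ subdivisor $D\subset B$ has $\mathrm{ev}_D$ injective; by the reformulation above, $B$ imposes independent conditions on $|\mathcal{L}|$, and lifting $B$ to $\widetilde B\in|\mathcal{O}_X(b)|$ finishes the proof. I expect the main obstacle to be the step $Z\neq W$: one must first replace the $B_0$ furnished by ii) with a \emph{reduced} divisor (the openness argument) and then extract a suitable degree-$n$ subdivisor by the rank argument, and one must be careful that $Z$ is genuinely closed. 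The irreducibility hypothesis i) is precisely what keeps the bad locus $Z$ from being a whole component of $W$, and it enters decisively both in forcing $\dim W=\dim|\mathcal{O}_{A_0}(b)|$ and in the final dimension count.
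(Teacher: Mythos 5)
Your proposal is correct and follows essentially the same route as the paper: you introduce the same bad locus (the paper's $W'$, your $Z$), exhibit a point $(D_0,B_0)$ outside it by the same linear-algebra extraction from the injective restriction to $B_0$, use irreducibility of $W$ and the equality $\dim W=\dim|\mathcal{O}_{A_0}(b)|$ to see the bad locus cannot dominate $|\mathcal{O}_{A_0}(b)|$, and lift a good divisor from $A_0$ to $|\mathcal{O}_X(b)|$ via the surjectivity coming from $H^1(X,\mathcal{O}_X(b-a))=0$. Your added care about reducedness of $B_0$ and closedness of the degeneracy locus only makes explicit steps the paper leaves implicit.
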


\begin{proof}
Suppose conditions i) and ii) are satisfied.

Define $W':=\{ (D,B)\in W | $ the restriction map  $ H^0(X,\mathcal{L})\longrightarrow H^0(D, \mathcal{L}\otimes\mathcal{O}_{D})$ is not injective$\}$. Let $p: W\longrightarrow |\mathcal{O}_{A_0}(b)|$ be the natural projection map. $dimW=dim|\mathcal{O}_{A_0}(b)|$ since $p$ is a finite surjective map.

Define $U=\{B\in |\mathcal{O}_{A_0}(b)| |$ the restriction map $ H^0(X,\mathcal{L})\longrightarrow H^0(B, \mathcal{L}\otimes\mathcal{O}_{B})$ is injective$  \}.$ By condition ii), $U$ is open dense in $|\mathcal{O}_{A_0}(b)|.$ So without loss of generality, we can assume the $B_0$ in ii) consists of $l$ distinct points.

Now because the restriction map $ H^0(X,\mathcal{L})\longrightarrow H^0(B_0, \mathcal{L}\otimes\mathcal{O}_{B_0})$ is injective, by linear algebra  $\exists $ subset $D_0\subset B_0$, $\ni D_0$ is a set of $n$ distinct points and the restriction map $H^0(X,\mathcal{L})\longrightarrow H^0(D_0, \mathcal{L}\otimes\mathcal{O}_{D_0})$ is injective too. Therefore, no member of $|\mathcal{L}|$ can contain $D_0$, which implies $(D_0, B_0)\in W$ but $(D_0, B_0)\notin W'.$ By i) $dim W'< dim W=dim|\mathcal{O}_{A_0}(b)|.$ So $W'$ cannot dominate $|\mathcal{O}_{A_0}(b)|$ via the map $p$. 

On the other hand, all divisors in the complementary of $p(W')$ in $|\mathcal{O}_{A_0}(b)|$ impose independent conditions on $|\mathcal{L}|$. By the assumption $H^1(X, \mathcal{O}_X(b-a))=0$, the natural restriction map $ H^0(X,\mathcal{O}_X(b))\longrightarrow H^0(A_0, \mathcal{O}_{A_0}(b))$ is surjective. So $(A_0,\mathcal{O}_X(b))$ can impose independent conditions on $|\mathcal{L}|.$ 
 
\end{proof}

The following general result gives us a nice criterion for the irreducibility of $W$.

\begin{lemma} Let $A$ be any smooth projective curve and $\mathcal{B}$ is a very ample line bundle on $A$. $deg\mathcal{B}=l$, $dim|\mathcal{B}|=N$. Define the incidence variety $W=\{(D,B)| D\subset B \}\subset A^{(n)}\times |\mathcal{B}|$, where $0\leq n\leq l$. Then $W$ is irreducible if $min\{n, l-n\}\leq N$. 

\end{lemma}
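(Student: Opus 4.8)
The plan is to exhibit $W$ as the image of an irreducible variety under a morphism, handling the two sides of the inequality $\min\{n,l-n\}\le N$ separately. First I would observe that $W$ carries two natural projections: $p\colon W\to|\mathcal{B}|$, whose fiber over $B$ is the set of $n$-element subschemes of the length-$l$ divisor $B$ (hence finite, of cardinality $\binom{l}{n}$), and $q\colon W\to A^{(n)}$, whose fiber over $D\in A^{(n)}$ is the linear system $|\mathcal{B}-D|\subset|\mathcal{B}|$ of divisors containing $D$. The idea is that one of these two fiber structures is "large and connected enough" to force irreducibility, depending on which of $n$ or $l-n$ is the smaller number.

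For the case $n\le N$: here I would use the projection $q\colon W\to A^{(n)}$. The source $A^{(n)}$ is irreducible (symmetric product of an irreducible curve). The key point is that for \emph{every} $D\in A^{(n)}$ the fiber $q^{-1}(D)\cong|\mathcal{B}(-D)|$ is nonempty and irreducible of the \emph{same} dimension $N-n$: nonemptiness and constancy of dimension follow because $\mathcal{B}$ is very ample of degree $l$, so $\mathcal{B}(-D)$ is globally generated (indeed base-point-free, since removing $n\le N$ general conditions from a very ample system keeps it base-point-free — more carefully, $h^0(\mathcal{B}(-D))=N+1-n$ for all effective $D$ of degree $n\le N$ because very ampleness gives that any $n\le N$ points impose independent conditions on $|\mathcal{B}|$), and a linear system is irreducible. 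A morphism whose source-target base is irreducible and all of whose fibers are irreducible of constant dimension has irreducible source; this gives $W$ irreducible. For the case $l-n\le N$: I would instead exploit the map $p\colon W\to|\mathcal{B}|$ together with the "complementary divisor" trick. Sending $(D,B)$ to $(B-D,B)$ identifies $W$ with $W^{\vee}:=\{(D',B): D'\subset B,\ \deg D'=l-n\}$, the analogous incidence variety with $n$ replaced by $l-n$; since $l-n\le N$, the previous paragraph applies to $W^{\vee}$, and hence $W\cong W^{\vee}$ is irreducible.

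I expect the main obstacle to be the bookkeeping in the first case: verifying cleanly that $h^0(A,\mathcal{B}(-D))$ is \emph{constant} (not merely lower-semicontinuous-bounded) as $D$ ranges over all of $A^{(n)}$, including non-reduced $D$, so that $q$ is genuinely a fibration with equidimensional irreducible fibers rather than just a dominant map. This is exactly where the hypothesis $n\le N$ enters: very ampleness of $\mathcal{B}$ says any two points of $A$ are separated and any tangent vector is separated, which by the standard argument upgrades to "every effective divisor of degree $\le N+1$ imposes independent conditions on $|\mathcal{B}|$," forcing $h^1(\mathcal{B}(-D))$ to have the minimal possible value and $h^0$ to be constant. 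Once equidimensionality is in hand, the conclusion "irreducible base, irreducible equidimensional fibers $\Rightarrow$ irreducible total space" is standard (e.g. via the fact that such a morphism is open, or by a dimension-counting argument on the irreducible components of $W$, each of which must dominate $A^{(n)}$). The complementary-divisor reduction then disposes of the symmetric case with no extra work.
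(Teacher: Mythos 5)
Your overall skeleton (project via $q$ to $A^{(n)}$, use that the fibers are linear systems, then handle $l-n\le N$ by the complementary-divisor isomorphism) matches the paper's, and the complementary-divisor step is fine. But there is a genuine gap at exactly the point you flag as "bookkeeping": the assertion that very ampleness of $\mathcal{B}$ "upgrades" to the statement that \emph{every} effective divisor of degree $n\le N$ imposes independent conditions on $|\mathcal{B}|$ is false, and there is no standard argument giving it. Very ampleness only controls divisors of degree $2$. Concretely, embed a smooth curve $A$ of genus $4$ and degree $6$ in $\mathbb{P}^3$ (a $(2,3)$ complete intersection, the very curve used in \S 3 of the paper): a line of a ruling of the quadric meets $A$ in a divisor $D$ of degree $3=N$, and hyperplanes through two of those points automatically contain the third, so $\dim|\mathcal{B}(-D)|=1>N-n=0$. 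Thus $h^0(\mathcal{B}(-D))$ genuinely jumps on $A^{(n)}$, the map $q$ is not a fibration with equidimensional fibers, and your "irreducible base, irreducible equidimensional fibers" conclusion applies only to $q^{-1}(V)$, where $V\subset A^{(n)}$ is the open locus of divisors imposing independent conditions. That alone does not rule out extra irreducible components of $W$ lying over $A^{(n)}\setminus V$; nor does "every component dominates $A^{(n)}$" follow for free.

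The missing ingredient — and the actual content of the paper's proof — is an argument that the bad locus contributes no new components. The paper does this in two steps: (i) by Castelnuovo's general position theorem there is a dense open $U\subset|\mathcal{B}|$ of hyperplane sections consisting of $l$ distinct points \emph{any} $n$ of which impose independent conditions (a statement about general hyperplane sections, not about all divisors, which is why it cannot be deduced from very ampleness alone); hence $W_U:=p^{-1}(U)\subset q^{-1}(V)$ is irreducible by your fiber argument restricted to $V$; (ii) a one-parameter specialization argument: any $(D_0,B_0)\in W$ is a limit of pairs $(D_t,H_t\cap A)$ with $H_t\in U$, using the finiteness of $p$ to follow subdivisors in the limit, so $W_U$ is dense in $W$ and irreducibility of $W$ follows. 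Your proposal contains neither (i) nor (ii), and without them the proof does not close; with them, it becomes the paper's proof.
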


\begin{proof}
If $n=0$ or $l$, obviously $W$ is irreducible. 

Suppose $0<n\le N$. The complete linear system $|\mathcal{B}|$ induces an embedding $\phi : A\hookrightarrow \mathbb{P}^N$ with $\phi^{*}(\mathcal{O}_{\mathbb{P}^N}(1))=\mathcal{B}$  and  $\phi^{*}: H^0(\mathbb{P}^{N}, \mathcal{O}_{\mathbb{P}^N}(1))\longrightarrow H^0(A,\mathcal{B})$ is an isomorphism. Then $A\subset \mathbb{P}^N$ is a nondegenerate algebraic curve of degree $l$. Let $(\mathbb{P}^N)^*$ be the set of hyperplanes of $\mathbb{P}^N$. By abusing notation, we can identify $(\mathbb{P}^N)^*$ with $|\mathcal{B}|$, and use them interchangeably later on.

Define $U=\{H\in (\mathbb{P}^N)^* | 
H\cap A=\{x_1,x_2, ..., x_l \} \; l\;$ distinct points,  and any set of $ \; n\; \text{points}\; \{x_{i_1},..., x_{i_n}\} $ imposes $n$ independent conditions on $|\mathcal{B}|  \}$. By Castelnuovo's general position theorem, $U$ is open dense in $(\mathbb{P}^N)^*$.

 Let $W_{U}=\{(D, B)\in W | B\in U \}$. Fix any $(D_0, B_0)\in W$. Because $U$ is dense in $(\mathbb{P}^N)^{*}$, we can find a one-parameter family of hyperplanes $H_t\in (\mathbb{P}^N)^{*}$ such that: $H_0\cap A=B_0$, and $\forall t\in \bigtriangleup \setminus\{0\}, H_t\in U$. Clearly then we can find $D_t\subset (H_t\cap A)$, $\ni (D_t, H_t\cap A)$ specialize to $(D_0, B_0)$. Therefore, $(D_0, B_0)\in \overline{W_U}$, which implies $W_U$ is open dense in $W$.

\vspace{2mm}

In order to show $W$ is irreducible, we only need to show $W_U$ is irreducible. To this end, consider the projection map $q: W\longrightarrow A^{(n)}$, where $q(D,B)=D$. Let $V=\{D\in A^{(n)}| dim|\mathcal{B}(-D)|=N-n$, i.e. $D$ imposes $n$ independent conditions on $|\mathcal{B}|$ $ \}$. Clearly $V$ is open dense in $A^{(n)}$. Furthermore, $q^{-1}(V)$ is irreducible since $\forall D\in V,$ the fibers $q^{-1}(D)$ are irreducible and of the same dimension. Because $W_U\subset q^{-1}(V)$ open subset, $W_U$ is irreducible too.

\vspace{2mm}

Next, suppose $0<l-n\le N$.  Define $W'=\{(D',B)| D'\subset B \}\subset A^{(l-n)}\times |\mathcal{B}|$. By the argument above, we know that $W'$ is irreducible. However, obviously $W\cong W'$ so $W$ is irreducible too.

The lemma is proved. 

\end{proof}

\vspace{2mm}

\begin{remark}
 All cases in Knutsen's paper [5] satisfy $n\le N$ in Lemma 1.6. Actually,  in [5] the method used to prove $(\star)$ requires $n\le N$ (cf.[5, Lemma 6.3]).   All cases in this paper can only satisfy $l-n\le N$.

\end{remark}

\begin{corollary}
We assume $H^1(X, \mathcal{O}_{X}(b-a))=0$. Then $(A_0,\mathcal{O}_X(b))$ can impose independent conditions on $|\mathcal{L}|$ if the following two conditions are satisfied:

i) $\frac{l}{2}\leq dim|\mathcal{O}_{A_0}(b)|$;

ii) $H^0(A_0, \mathcal{L}\otimes \mathcal{O}_{A_0}(-b))=0$.

Furthermore,  $H^1(X, \mathcal{L}(-a-b))=0$ implies the condition ii).
\end{corollary}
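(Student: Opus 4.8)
The plan is to deduce Corollary 1.8 from Proposition 1.5 by checking that hypotheses i) and ii) of that proposition follow from the two numerical/cohomological conditions stated here, together with the standing assumption $H^1(X,\mathcal{O}_X(b-a))=0$. First I would address the irreducibility of the incidence variety $W\subset A_0^{(n)}\times|\mathcal{O}_{A_0}(b)|$. Here $\mathcal{B}:=\mathcal{O}_{A_0}(b)$ has degree $l=c_1(\mathcal{O}_X(a))\cdot c_1(\mathcal{O}_X(b))$ (this is exactly $\deg(\mathcal{O}_X(b)|_{A_0})$ by the projection/adjunction-type formula for curves on surfaces), and I would need $\mathcal{B}$ to be very ample on $A_0$ so that Lemma 1.6 applies; since $\mathcal{O}_X(b)$ is very ample on $X$ (it is a multiple of a hyperplane bundle in all the intended applications — I would state this as part of the ambient hypotheses already in force), its restriction to the smooth curve $A_0$ is very ample. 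With $N:=\dim|\mathcal{O}_{A_0}(b)|$, Lemma 1.6 gives irreducibility of $W$ as soon as $\min\{n,\,l-n\}\le N$. Since $n\le N$ already by $h^0(X,\mathcal{L})=n$ and the standing assumption $l>n$ (so $l-n\ge 1$), the cleanest sufficient condition is $l-n\le N$; but condition i) says $\tfrac{l}{2}\le N$, and combined with $n\le N$ this forces $\min\{n,l-n\}\le \tfrac l2\le N$ regardless of which of $n$, $l-n$ is smaller. Hence i) of Proposition 1.5 holds.

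Next I would verify hypothesis ii) of Proposition 1.5: the existence of some $B_0\in|\mathcal{O}_{A_0}(b)|$ for which the restriction $H^0(X,\mathcal{L})\to H^0(B_0,\mathcal{L}\otimes\mathcal{O}_{B_0})$ is injective. It suffices to show that the map $H^0(X,\mathcal{L})\to H^0(A_0,\mathcal{L}\otimes\mathcal{O}_{A_0})$ is injective and then restrict further to a general divisor $B_0$ in the very ample (hence base-point-free) system $|\mathcal{O}_{A_0}(b)|$, because the kernel of $H^0(A_0,\mathcal{L}|_{A_0})\to H^0(B_0,\mathcal{L}|_{B_0})$ is $H^0(A_0,\mathcal{L}\otimes\mathcal{O}_{A_0}(-b))$, which is precisely condition ii). So the chain of reductions is: condition ii), i.e. $H^0(A_0,\mathcal{L}\otimes\mathcal{O}_{A_0}(-b))=0$, kills the second kernel; and the injectivity of $H^0(X,\mathcal{L})\to H^0(A_0,\mathcal{L}|_{A_0})$ has kernel $H^0(X,\mathcal{L}(-a))$, which vanishes by the standing assumption (1.1). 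Composing, $H^0(X,\mathcal{L})\to H^0(A_0,\mathcal{L}\otimes\mathcal{O}_{A_0}(-b))^{\perp}\hookrightarrow H^0(B_0,\mathcal{L}|_{B_0})$ is injective for general $B_0$, giving ii) of Proposition 1.5. Applying Proposition 1.5 then yields the conclusion.

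For the final sentence, I would show $H^1(X,\mathcal{L}(-a-b))=0$ implies condition ii). Consider the restriction exact sequence $0\to \mathcal{O}_X(-a)\to \mathcal{O}_X\to \mathcal{O}_{A_0}\to 0$ twisted by $\mathcal{L}(-b)$, namely $0\to \mathcal{L}(-a-b)\to \mathcal{L}(-b)\to \mathcal{L}\otimes\mathcal{O}_{A_0}(-b)\to 0$. The long exact sequence shows $H^0(A_0,\mathcal{L}\otimes\mathcal{O}_{A_0}(-b))$ is squeezed between $H^0(X,\mathcal{L}(-b))$ and $H^1(X,\mathcal{L}(-a-b))$. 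The latter vanishes by hypothesis, and $H^0(X,\mathcal{L}(-b))\hookrightarrow H^0(X,\mathcal{L}(-a))=0$ since $a\le b$; hence $H^0(A_0,\mathcal{L}\otimes\mathcal{O}_{A_0}(-b))=0$, which is ii).

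The main obstacle, and the step needing the most care, is the bookkeeping around which ambient positivity hypotheses are genuinely available: Lemma 1.6 requires $\mathcal{B}=\mathcal{O}_{A_0}(b)$ to be \emph{very ample} on $A_0$, so I must make sure this is legitimately part of the setup (it is, because in all intended applications $\mathcal{O}_X(b)$ is a positive multiple of a hyperplane class on a projectively embedded $X$) rather than an extra assumption slipped in. The rest is a routine assembly of long exact sequences and the inequality chain $\min\{n,l-n\}\le l/2\le N$; the only subtlety there is remembering that $n\le N$ is automatic (it is literally $h^0(X,\mathcal{L})\le h^0(A_0,\mathcal{O}_{A_0}(b))$, which follows once the surjectivity from $H^1(X,\mathcal{O}_X(b-a))=0$ and the vanishing $H^0(X,\mathcal{L}(-a))=0$ are in hand, or more simply from $l>n$ forcing $l-n\le l/2$ to be the relevant minimum whenever $n>l/2$).
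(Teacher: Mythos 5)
Your proposal is correct and follows essentially the same route as the paper: reduce to Proposition 1.5, obtain irreducibility of $W$ from Lemma 1.6 via $\min\{n,\,l-n\}\le \frac{l}{2}\le \dim|\mathcal{O}_{A_0}(b)|$, obtain the required injective restriction from $H^0(X,\mathcal{L}(-a))=0$ combined with $H^0(A_0,\mathcal{L}\otimes\mathcal{O}_{A_0}(-b))=0$, and prove the final claim with the exact sequence $0\to\mathcal{L}(-a-b)\to\mathcal{L}(-b)\to\mathcal{L}\otimes\mathcal{O}_{A_0}(-b)\to 0$ together with $H^0(X,\mathcal{L}(-b))\subseteq H^0(X,\mathcal{L}(-a))=0$. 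One aside in your write-up is wrong but harmless: $n\le N$ is \emph{not} automatic (Remark 1.7 even notes that all cases in this paper satisfy only $l-n\le N$, and your claimed bound $h^0(X,\mathcal{L})\le h^0(A_0,\mathcal{O}_{A_0}(b))$ does not follow from the stated hypotheses), yet your argument never actually needs it, since $\min\{n,l-n\}\le \frac{l}{2}\le N$ already suffices.
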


\begin{proof}
$\frac{l}{2}\leq dim|\mathcal{O}_{A_0}(b)|$ implies either  $n\leq dim|\mathcal{O}_{A_0}(b)|$ or  $l-n\leq dim|\mathcal{O}_{A_0}(b)|$, so $W$ is irreducible by Lemma 1.6.

Clearly $H^0(A_0, \mathcal{L}\otimes \mathcal{O}_{A_0}(-b))=0$ implies the condition ii) in Proposition 1.5.

Next, we need to show $H^1(X, \mathcal{L}(-a-b))=0$ implies $H^0(A_0, \mathcal{L}\otimes \mathcal{O}_{A_0}(-b))=0$.To this end let's consider the following exact sequence of sheaves, $$0\longrightarrow \mathcal{L}(-a-b)\longrightarrow \mathcal{L}(-b)\longrightarrow \mathcal{L}\otimes\mathcal{O}_{A_0}(-b)\longrightarrow 0.$$
Taking cohomology groups, we have $H^0(X, \mathcal{L}(-b))\longrightarrow H^0(A_0, \mathcal{L}\otimes \mathcal{O}_{A_0}(-b))\longrightarrow H^1(X, \mathcal{L}(-a-b))) $ exact. By assumption  $H^0(X,\mathcal{L}(-a))=0$ and hence $ H^0(X,\mathcal{L}(-b))=0$ since $a\leq b$, so $H^1(X, \mathcal{L}(-a-b))=0$ implies $H^0(A_0, \mathcal{L}\otimes \mathcal{O}_{A_0}(-b))=0$.

\end{proof}

\begin{remark}
In \S 2 and \S 3, we will just use Corollary 1.8. to show $(\star)$ for all cases.
\end{remark}

\vspace{3mm}

\begin{center}
\section{Curves on K3 surfaces in nodal Calabi-Yau threefolds}
\end{center}

In the rest of this paper, we will use Theorem 0.1 to show the existence of  smooth and isolated curves in general complete intersection Calabi-Yau (CICY) threefolds. In this section, these curves are obtained by deforming a careful chosen continuous family of curves on a complete intersection K3 surface in a nodal CICY threefold.  

We first recall how we can embed a complete intersection K3 surface into a nodal CICY threefold. We will follow notations used in [5, \S 6]. 

It is well known that there are three types of complete intersection K3 surfaces in projective space, namely the intersection types (4) in $\mathbb{P}^3$, (2,3) in $\mathbb{P}^4$ and $(2,2,2)$ in $\mathbb{P}^5$. Similarly, there are five types of CICY threefolds in projective space, namely the intersection types $(5)$ in $\mathbb{P}^4$, $(3,3)$ and $(4,2)$ in $\mathbb{P}^5$,  $(3,2,2)$ in $\mathbb{P}^6$ and $(2,2,2,2)$ in $\mathbb{P}^7$.

\newpage
\centerline{Table 1} 
\begin{tabular}{|c|c|c|}
\hline 
$Y=(a_1,a_2,...,a_{r-4},a_{r-3}+a_{r-2})\subset \mathbb{P}^r$  &$X=(a_1, a_2,...,a_{r-2})\subset \mathbb{P}^r$  & $r$\\
\hline 
\hline 
 $(5)$ & $(4,1)$  & 4 \\
\hline 
  $(5)$ &$(3,2)$   & 4 \\
\hline
  $(4,2)$ &$(4,1,1)$   & 5 \\
\hline
 $(2,4)$ &$(2,3,1)$   &5\\
\hline 
$(2,4)$  & $(2,2,2)$  &5  \\
\hline 
$(3,3)$  &  $(3,2,1)$  & 5 \\
\hline 
$(3,2,2)$  &  $(3,2,1,1)$  &6  \\
\hline 
$(2,2,3)$ &  $(2,2,2,1)$  &6  \\
\hline 
$(2,2,2,2)$  & $(2,2,2,1,1)$   &7  \\
\hline 
\end{tabular}

\vspace{10 mm}

\begin{remark}
Table 1 here is a part of  [5, Table 1 in \S 6]. Notice that in [5, Table 1 in \S 6] the complete intersection types of $Y$ are denoted as $(b_i)$.
\end{remark}

Our goal is to embed a given smooth complete intersection K3 surface $X$ of type $(a_1, a_2,...,a_{r-2})$ into a nodal CICY threefold $Y$ of type $(a_1, a_2,...,a_{r-4}, a_{r-3}+a_{r-2})$. To this end, we first choose generators $g_i$ of degrees $a_i$ for the ideal of $X$. So $X=Z(g_1,...,g_{r-2}).$ 

Define $$f_i:=\sum\alpha_{ij}g_{j}$$ where $\alpha_{ij}$ are general in $H^0( \mathbb{P}^{r},\mathcal{O}_{\mathbb{P}^r}(a_i-a_j))$ if $1\leq i\leq r-4$ and  $\alpha_{(r-3)j}$ are general in $H^0( \mathbb{P}^{r},\mathcal{O}_{\mathbb{P}^r}(a_{r-3}+a_{r-2}-a_j))$.

 Then define $$Y:=Z(f_1,...,f_{r-3}).$$ 
 
 If the coefficient forms $\alpha_{ij}$ are chosen in a sufficient general way, $Y$ has only $l=a_1a_2...a_{r-4}a_{r-3}^2a_{r-2}^2$ ordinary double points and they lie on $X$. This can be checked using Bertini's theorem. In fact, the $l$ nodes are the intersection points of two general elements of $|\mathcal{O}_X(a_{r-2})|$ and $|\mathcal{O}_X(a_{r-3})|$ (distinct, when $a_{r-2}=a_{r-3}$). As above, we denote the set of nodes by $S$.

Moreover, for general $\alpha_{ij}$, Bertini's theorem yields that the fourfold $$Z:=Z(f_1,...,f_{r-4})$$ is smooth. (Note that $Z=\mathbb{P}^r$ if $r=4$.)

We are therefore in the setting of Theorem 0.1 with $P=\mathbb{P}^r$, $$\mathcal{E}:=(\oplus_{i=1}^{r-4}\mathcal{O}_{\mathbb{P}^r}(a_i))\oplus \mathcal{O}_{\mathbb{P}^r}(a_{r-3}+a_{r-2})$$ and $\mathcal{M}_{r-3}:=\mathcal{O}_{\mathbb{P}^r}(a_{r-2}+a_{r-3})$.

\vspace{3mm}
\begin{remark}
Actually, as mentioned in \S 1, the integers $a_{r-2}$ and $a_{r-3}$ correspond to the integers $a$ and $b$ used in \S 1.
\end{remark}

As mentioned in the introduction, in this section we will prove the main theorems Theorem 2.9 and Theorem 2.12.

To apply Theorem 0.1 to prove Theorem 2.9 and Theorem 2.12, we first need a smooth regular surface $X$ and a line bundle $\mathcal{L}$ on $X$. All surfaces $X$ which will be used in the proofs of Theorem 2.9 and Theorem 2.12 are complete intersection K3 surfaces as in Table 1 with $Pic\; X=\mathbb{Z}H\oplus \mathbb{Z}C$, where $H$ is the hyperplane section of $X$ and $C$ is a smooth irreducible curve on $X$. Furthermore, the genus $g$ and degree $d$ of $C$ are exactly the genus and degree of the desired smooth and isolated curves in general CICY threefolds. Then we define $\mathcal{L}$ to be $\mathcal{O}_X(C)$. The existence of these K3 surfaces are guaranteed by the following theorem due to Knutsen.
\vspace{2mm}

\begin{theorem}
 ([6, Theorem 1.1]). Let $n\geq 2, d>0, g\geq 0 $ be integers. Then there exists a K3 surface $X$ of degree $2n$ in $\mathbb{P}^{n+1}$ containing a smooth curve $C$ of degree d and genus $g$ if and only if
 
 \vspace{2mm}

 (i) $g=\frac{d^2}{4n}+1$ and there exist integers $k, m\geq 1$ and $(k,m)\neq (2,1)$ such that $n=k^2m$ and $2n$ divides $kd$,
 
 (ii) $\frac{d^2}{4n}<g<\frac{d^2}{4n}+1$ except in the following cases 
 
 \hspace{.2in} (a) $d\equiv \pm 1, \pm 2\; (mod\; 2n)$,
 
 \hspace{.2in} (b)  $d^2-4n(g-1)=1$ and $d\equiv n\pm 1\; (mod\; 2n)$,
 
 \hspace{.2in} (c) $d^2-4n(g-1)=n$ and $d\equiv n\; (mod\; 2n)$,
 
 \hspace{.2in} (d) $d^2-4n(g-1)=1$  and $d-1$ or $d+1$ divides $2n$,
 
 (iii) $g=\frac{d^2}{4n}$ and $d$ is not divisible by $2n$,
 
 (iv) $g<\frac{d^2}{4n}$ and $(d,g)\neq (2n+1, n+1)$.
 
 \vspace{3mm}
 
 Furthermore, in case (i) $X$ can be chosen such that $Pic \;X= \mathbb{Z}\frac{2n}{dk}C=\mathbb{Z}\frac{1}{k}H$ and in cases (ii)-(iv) such that $Pic\; X=\mathbb{Z}H\oplus \mathbb{Z}C$, where $H$ is the hyperplane section of $X$.
 
 If $n\geq 4, \; X$ can be chosen to be scheme-theoretically an intersection of quadrics in cases (i), (iii) and (iv), and also in case (ii), except when $d^2-4n(g-1)=1$ and $3d\equiv \pm 3\; (mod\; 2n)$ or $d^2-4n(g-1)=9$ and $d\equiv \pm 3\; (mod\; 2n)$, in which case $X$ has to be an intersection of both quadrics and cubics.

\end{theorem}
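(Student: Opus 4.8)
The plan is to deduce this from the lattice theory of $K3$ surfaces together with Saint-Donat's description of linear systems on them and Mori's description of their homogeneous ideals. Throughout, let $\Lambda$ denote the rank-two even lattice with Gram matrix having diagonal entries $2n$ and $2g-2$ and off-diagonal entry $d$ in a basis $\{H,C\}$; its discriminant is $d^{2}-4n(g-1)$.

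\emph{The ``only if'' direction.} Suppose $X\subset\mathbb{P}^{n+1}$ is a smooth $K3$ of degree $2n$ containing a smooth curve $C$ of degree $d$ and genus $g$, and let $H$ be the hyperplane class. Adjunction on $X$ gives $C^{2}=2g-2$, and the projective data give $H^{2}=2n$, $H\cdot C=d$, so $\Lambda$ is realized as a sublattice of $\mathrm{Pic}\,X$ containing the very ample class $H$. By the Hodge index theorem $\Lambda$ has signature $(1,0)$ or $(1,1)$, hence $d^{2}-4n(g-1)\ge 0$, i.e. $g\le \frac{d^{2}}{4n}+1$, which already rules out everything outside the ranges of (i)--(iv). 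If equality holds then $H$ and $C$ are proportional in $\mathrm{Pic}\,X\otimes\mathbb{Q}$; writing the primitive generator $h$ of the corresponding rank-one subgroup with $h^{2}=2m$ and $H=kh$, one extracts $n=k^{2}m$ and $2n\mid kd$, while $(k,m)=(2,1)$ is excluded because $|2h|$ maps a $K3$ with $\mathrm{Pic}=\mathbb{Z}h$, $h^{2}=2$ two-to-one onto the Veronese surface rather than onto a degree-$8$ $K3$: this is (i). In the non-degenerate range, each excluded configuration corresponds, via an explicit search inside $\Lambda$, to a class forbidding $H$ from being very ample (or projectively normal): for instance $g=\frac{d^{2}}{4n}$ with $2n\mid d$ forces $C-\frac{d}{2n}H$ to be an effective $(-2)$-class orthogonal to $H$, which is impossible; similarly the exceptions (a)--(d) and the pair $(d,g)=(2n+1,n+1)$ each produce an effective $(-2)$-class of small $H$-degree, an elliptic pencil $|E|$ with $H\cdot E\le 2$, or a splitting $H\sim 2B$ with $B$ of arithmetic genus $1$, all of which Saint-Donat's criteria exclude for a smooth $X\subset\mathbb{P}^{n+1}$ of the stated degree.

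\emph{The ``if'' direction.} Assume $(n,d,g)$ satisfies (i)--(iv). First realize $\Lambda$ abstractly: in the non-degenerate cases it is even of signature $(1,1)$ and rank $2$, so by Nikulin's criterion it admits a primitive embedding into the $K3$ lattice $U^{\oplus 3}\oplus E_{8}(-1)^{\oplus 2}$ (case (i) reduces to the rank-one lattice $\mathbb{Z}h$). By the surjectivity of the period map and the Torelli theorem there is a $K3$ surface $X$ with $\mathrm{Pic}\,X\cong\Lambda$ and a class $H$ with $H^{2}=2n$; acting by the Weyl group generated by reflections in $(-2)$-classes we may assume $H$ is nef, and the numerical hypotheses are precisely what is needed to ensure $H$ is ample. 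Then Saint-Donat's theorems give that $|H|$ is base-point free and that $\varphi_{|H|}$ embeds $X$ as a projectively normal $K3$ of degree $2n$ in $\mathbb{P}^{n+1}$, the possible obstructions (unigonal or hyperelliptic image, an elliptic pencil of $H$-degree $\le 2$, a $(-2)$-curve of small $H$-degree) being exactly those ruled out by (a)--(c). Finally, since $C^{2}=2g-2\ge -2$, Saint-Donat's analysis of $|C|$ on a $K3$, combined with a check inside $\Lambda$ that $C$ (up to a reflection fixing $H$) is nef and not a proper multiple of an elliptic pencil, yields a smooth irreducible member of $|C|$ of degree $d$ and genus $g$, the unique failure being $(d,g)=(2n+1,n+1)$; so $X$ contains the desired curve. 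The statement on $\mathrm{Pic}\,X$ is automatic by construction, and in the hyperelliptic-free range the last assertion follows from Saint-Donat's and Mori's description of the homogeneous ideal of a projectively normal $K3$: it is generated by quadrics unless $\varphi_{|H|}(X)$ is trigonal or contains a line, which a computation in $\Lambda$ identifies with $d^{2}-4n(g-1)=1$, $3d\equiv\pm 3\pmod{2n}$ and with $d^{2}-4n(g-1)=9$, $d\equiv\pm 3\pmod{2n}$ respectively, in which cases cubics are also needed.

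The standard inputs here — primitive lattice embeddings, surjectivity of the period map, Saint-Donat's and Mori's structure theorems — are available off the shelf, so I expect the real work, and the main obstacle, to be the two-way bookkeeping that matches each arithmetic exception (a)--(d), the divisibility restrictions in (i) and (iii), and the sporadic pair $(2n+1,n+1)$ to the precise geometric degeneracy it encodes: showing that $\Lambda$ represents an effective $(-2)$-class of $H$-degree $0$ or $1$, an isotropic class of $H$-degree $\le 2$, a $2$-divisible decomposition of $H$, or forces $|C|$ to have no reduced irreducible member, exactly on the excluded triples and no others.
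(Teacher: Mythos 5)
This statement is not proved in the paper at all: it is Knutsen's result, quoted verbatim from reference [6] (Math.\ Scand.\ 90 (2002)) and used as a black box to supply the K3 surfaces $X$ with $\mathrm{Pic}\,X=\mathbb{Z}H\oplus\mathbb{Z}C$; so there is no in-paper proof to measure you against. Your outline does follow the same route as Knutsen's actual argument in [6] (and Mori's earlier quartic case): realize the rank-two lattice via Nikulin's embedding theorem, produce $X$ by surjectivity of the period map and Torelli, make $H$ ample after reflecting by $(-2)$-classes, and then invoke Saint-Donat for very ampleness of $H$, smoothness of a member of $|C|$, and generation of the ideal by quadrics (and cubics in the listed exceptions). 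Several of your spot checks are sound, e.g.\ the Hodge-index bound $g\le\frac{d^2}{4n}+1$, and the case (iii) obstruction: if $2n\mid d$ then $D=C-\frac{d}{2n}H$ satisfies $D^2=-2$, $D\cdot H=0$, contradicting ampleness of $H$ whatever the rest of $\mathrm{Pic}\,X$ is.

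The genuine gap is that the content of the theorem is exactly the part you defer as ``two-way bookkeeping.'' In the ``if'' direction you must verify, case by case in the lattice $\Lambda$ depending on the arithmetic of $(n,d,g)$, that one can simultaneously arrange $H$ very ample (no elliptic pencil $E$ with $E\cdot H\le 2$, no decomposition $H\sim 2B$ with $B^2=2$, etc.) and $|C|$ containing a smooth irreducible member (not merely $C^2\ge -2$; on these rank-two lattices $|C|$ can have fixed components or be a multiple of a pencil, and the sporadic exclusion $(d,g)=(2n+1,n+1)$ arises precisely here); nothing in your sketch shows these checks succeed for every non-excluded triple and fail exactly on the excluded ones. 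In the ``only if'' direction you assert that each exception (a)--(d) ``produces'' a forbidden class, but you neither exhibit the class nor show the argument is independent of the (possibly much larger) Picard lattice of an arbitrary such $X$, and the final paragraph on quadrics versus quadrics-and-cubics rests on an unproven identification of the conditions $d^2-4n(g-1)=1,\,3d\equiv\pm3$ and $d^2-4n(g-1)=9,\,d\equiv\pm3\pmod{2n}$ with the trisecant/line obstructions in Saint-Donat and Mori. As written this is a correct plan reproducing the strategy of [6], not a proof; if you intend to use the statement the way this paper does, the honest move is to cite [6] rather than re-derive it.
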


 In order to apply Theorem 0.1, we need to show that $X$ can be embedded into a nodal CICY threefold $Y$ such that the conditions (A1)-(A7) mentioned in the introduction are all satisfied. As explained in the proofs of Theorem 2.9 and Theorem 2.12, all of them except the condition $(\star)$ can be easily verified by using results in [5]. We will use Corollary 1.8 to check $(\star)$.

\begin{definition}
Let X be a K3 surface. A divisor $D\in Div(X)$ is called -2 divisor if the self-intersection $D^2=-2$.
\end{definition}

\begin{remark}
It is easy to see that a K3 surface $X$ has a -2 divisor if and only if it contains a smooth rational curve.
\end{remark}
In order to use Corollary 1.8, we need to show $H^0(A_0, \mathcal{L}\otimes \mathcal{O}_{A_0}(-b))=0$, where $A_0$ is any smooth irreducible member in $|\mathcal{O}_X(a)|$. As explained in the Corollary 1.8, it suffices to show $H^1(X, \mathcal{L}(-a-b))=0$. Then there are two different situations: 1) the K3 surfaces $X$ do not have -2 divisors; 2) the K3 surfaces $X$ have -2 divisors.  In the first situation, it is very easy to check $H^1(X, \mathcal{L}(-a-b))=0$. (Cf. Lemma 2.8) In the second situation,  we explicitly compute the closed cone of curves $\overline{NE(X)}$ and the nef cone $Nef(X)$ and then use the information about these cones to check $H^1(X, \mathcal{L}(-a-b))=0$.
\vspace{2mm}
\subsection{$X$ doesn't have -2 divisors}

\subsubsection{Vanishing of the first cohomology group of line bundles on a K3 surface without -2 divisors}
\begin{proposition}
 ([10, Cor. 3.2]).  Let $\Sigma $ be a complete linear system on a K3 surface. Then $\Sigma$ has no base points outside its fixed components.

\end{proposition}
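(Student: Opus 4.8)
The statement is classical --- it is Saint-Donat's analysis of linear systems on K3 surfaces --- and here is the route I would follow. Write $\Sigma=|D|$ and decompose $D=F+M$, where $F$ is the fixed part and $|M|$ has no fixed component; it suffices to prove that $|M|$ is base-point free, and we may assume $M\neq 0$. The first observation is that $M$ is nef: for any irreducible curve $\Gamma\subset X$, since $|M|$ has no fixed component a general member $C'\in|M|$ does not contain $\Gamma$, whence $M\cdot\Gamma=C'\cdot\Gamma\geq 0$. In particular $M^2\geq 0$, and Riemann--Roch on the K3 surface gives $\chi(\mathcal{O}_X(M))=2+\tfrac12 M^2\geq 2$; since $M$ is effective and nonzero, $h^2(\mathcal{O}_X(M))=h^0(\mathcal{O}_X(-M))=0$, so $h^0(\mathcal{O}_X(M))\geq 2$. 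If $M^2=0$, the structure theory of K3 surfaces identifies $M$: being nef with vanishing self-intersection and moving, $M\sim kE$ for some $k\geq 1$ and a smooth elliptic curve $E$ with $E^2=0$ whose pencil $|E|$ is base-point free, so $|M|=|kE|$ is the pullback of $\mathcal{O}_{\mathbb{P}^1}(k)$ under the elliptic fibration $X\to\mathbb{P}^1$, hence base-point free.

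The case $M^2>0$ is the heart of the argument; here $M$ is nef and big. The key point is the adjunction identity available because $\omega_X\cong\mathcal{O}_X$: for an integral member $C\in|M|$, adjunction gives $\mathcal{O}_C(M)=\mathcal{O}_C(C)\cong\omega_C$, a line bundle of degree $2p_a(C)-2=M^2>0$ with $p_a(C)=1+\tfrac12 M^2\geq 2$. From the exact sequence $0\to\mathcal{O}_X\to\mathcal{O}_X(M)\to\mathcal{O}_C(M)\to 0$ and $H^1(\mathcal{O}_X)=0$, the restriction $H^0(\mathcal{O}_X(M))\to H^0(C,\omega_C)$ is surjective, so the trace of $|M|$ on $C$ is the full canonical system $|\omega_C|$. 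A base point $x$ of $|M|$ lies on every member, in particular on this general integral $C$; but $|\omega_C|$ is base-point free for an integral Gorenstein curve of arithmetic genus $\geq 2$, so $|M|$ has no base point on $C$, a contradiction. The one gap to fill is the existence of an integral general member of $|M|$: this is precisely Saint-Donat's classification, which shows that when $|M|$ has no fixed part and $M^2>0$ the general member of $|M|$ is integral unless $|M|$ is one of a short explicit list of systems (built from genus-one pencils and the ``hyperelliptic'' cases), each of which is checked directly to be base-point free.

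The main obstacle is therefore not the adjunction trick but the structural input invoked twice above: that an effective non-fixed divisor with $M^2=0$ is a multiple of an elliptic pencil, and that one with $M^2>0$ has an integral general member outside an explicit exceptional list. These facts rest on Riemann--Roch, the Hodge index theorem, and an induction on $M^2$ that peels off genus-one pencils. Once they are granted, base-point freeness of $|M|$ follows as above; in the body of the paper I would simply invoke [10, Cor.~3.2] (equivalently, Saint-Donat) for this input rather than reproduce the classification.
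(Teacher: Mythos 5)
The paper offers no proof of this proposition at all: it is quoted directly from Saint-Donat [10, Cor.~3.2] (just as Proposition 2.7 is quoted from [6, Prop.~2.3]) and is then used as a black box in the proof of Lemma 2.8. So there is no internal argument to compare yours against; what you have written is the standard route to the classical result, and in outline it is sound. The reduction to the moving part $M$, the nefness of $M$, the Riemann--Roch estimate, the case $M^2=0$ via a multiple of an elliptic pencil, and the adjunction step identifying the trace of $|M|$ on an integral member $C$ with the complete canonical system $|\omega_C|$ are all correct; and you are right to insist on base-point freeness of $\omega_C$ for an \emph{integral Gorenstein} curve of arithmetic genus at least $2$, since a putative base point forces every member, including your chosen $C$, to pass through it, and $C$ may be singular precisely there, so Bertini gives no smoothness and the smooth-curve statement would not suffice. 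That auxiliary fact is true but is itself a nontrivial input (it needs an argument at singular points, e.g.\ via duality or Rosenlicht/Catanese), so it deserves a citation rather than a passing clause.

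The honest caveat, which you flag yourself, is that the two decisive structural inputs---that a fixed-part-free class with $M^2=0$ is a multiple of an elliptic pencil (exactly the paper's Proposition 2.7), and that for $M^2>0$ a fixed-part-free system has an integral general member---are themselves pieces of Saint-Donat's analysis, so as a self-contained proof your argument is circular at the hard step. Since the paper simply cites [10] for the whole statement, deferring those inputs to the same source is consistent with how the result is used here, and nothing is lost for the purposes of this paper. One small correction of framing: for $M^2>0$ with no fixed component the general member of $|M|$ is integral with no exceptional list; the short list of exceptional (``hyperelliptic'' and unigonal) cases in Saint-Donat concerns the behaviour of the morphism defined by $|M|$, not the integrality of its members, so no separate case-check of such systems is needed for base-point freeness.
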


\begin{proposition}
 ([6, Prop. 2.3]) Let $|D|\neq \emptyset$ be a complete linear system without fixed components on a K3 surface such that $D^2=0$. Then every member of $|D|$ can be written as a sum $E_1+E_2+...+E_k,$ where $E_i\in|E|$ for $i=1,...,k$ and $E$ is a smooth curve of genus 1. 
 
 In other words, $|D|$ is a multiple $k$ of an elliptic pencil. 
 
 In particular, if $D$ is part of a basis of $Pic\; X$, then the generic member of $|D|$ is smooth and irreducible.

\end{proposition}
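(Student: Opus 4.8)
The plan is to realize the base-point-free system $|D|$ as the pullback of the complete degree-$k$ linear system on $\mathbb{P}^1$ under an elliptic fibration $X\to\mathbb{P}^1$. We may assume $D\neq 0$, the case $D=0$ being trivial. Since $D$ is effective and nonzero, $h^2(X,\mathcal{O}_X(D))=h^0(X,\mathcal{O}_X(-D))=0$, so Riemann--Roch on the K3 surface gives $h^0(X,\mathcal{O}_X(D))=2+\frac12 D^2+h^1(X,\mathcal{O}_X(D))\geq 2$, i.e. $\dim|D|\geq 1$. Because $|D|$ has no fixed components, Proposition 2.6 shows it is base-point-free, hence defines a morphism $\phi\colon X\to\mathbb{P}^N$ with $N=\dim|D|\geq 1$ and $\phi^{*}\mathcal{O}_{\mathbb{P}^N}(1)=\mathcal{O}_X(D)$. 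Since $D^2=0$ the morphism $\phi$ cannot be generically finite onto its image (otherwise $D^2=(\deg\phi)\cdot\deg\phi(X)>0$), so $\phi(X)$ is a curve. Let $X\xrightarrow{\,f\,}B\xrightarrow{\,g\,}\mathbb{P}^N$ be the Stein factorization: $B$ is a smooth projective curve, $f$ is surjective with connected fibers, $f_{*}\mathcal{O}_X=\mathcal{O}_B$, and $\mathcal{O}_X(D)=f^{*}\mathcal{A}$ where $\mathcal{A}:=g^{*}\mathcal{O}_{\mathbb{P}^N}(1)$ has degree $e\geq 1$; by the projection formula $h^0(B,\mathcal{A})=h^0(X,\mathcal{O}_X(D))=N+1$.

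There are then two things to prove. First, a general fiber $F$ of $f$ is a smooth curve of genus $1$: by generic smoothness $F$ is smooth and connected with $F^2=0$, and adjunction together with $K_X\sim 0$ gives $2g(F)-2=F^2+F\cdot K_X=0$. Second --- and this is the step I expect to be the main obstacle --- one must show $B\cong\mathbb{P}^1$. For this I would use the fact that on a K3 surface numerical equivalence coincides with linear equivalence (since $\mathrm{Pic}^0 X=0$ and the intersection form on the torsion-free group $\mathrm{NS}(X)$ is nondegenerate): for any points $p,q\in B$ the fibers $f^{-1}(p)$ and $f^{-1}(q)$ are numerically equivalent, hence linearly equivalent, so $f^{*}\mathcal{O}_B(p-q)\cong\mathcal{O}_X$; and since $f_{*}\mathcal{O}_X=\mathcal{O}_B$ the projection formula makes $f^{*}\colon\mathrm{Pic}(B)\to\mathrm{Pic}(X)$ injective, so $\mathcal{O}_B(p-q)\cong\mathcal{O}_B$ for all $p,q$, which forces $g(B)=0$. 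It is worth noting that this is exactly where the hypotheses ``$D^2=0$'', ``no fixed components'' and ``$X$ is a K3 surface'' all enter in an essential way.

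Granting $B\cong\mathbb{P}^1$, we have $\mathcal{A}=\mathcal{O}_{\mathbb{P}^1}(e)$, and $h^0(B,\mathcal{A})=N+1$ forces $e=N$. Put $E:=f^{*}\mathcal{O}_{\mathbb{P}^1}(1)$, the fiber class. From $0\to\mathcal{O}_X\to\mathcal{O}_X(E)\to\mathcal{O}_E(E)\to 0$ and the fact that $\mathcal{O}_X(E)|_E\cong\mathcal{O}_E$ (being pulled back from a point), one gets $h^0(X,\mathcal{O}_X(E))=2$, hence by Riemann--Roch $h^1(X,\mathcal{O}_X(E))=0$ and $\dim|E|=1$; so $|E|$ is a base-point-free pencil whose general member is smooth of genus $1$, i.e. an elliptic pencil. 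Now $D=f^{*}\mathcal{O}_{\mathbb{P}^1}(e)\sim eE$, and $f^{*}\colon|\mathcal{O}_{\mathbb{P}^1}(e)|\to|D|$ is an injective morphism between projective spaces of the same dimension $e=N$, hence an isomorphism; therefore every member of $|D|$ has the form $f^{-1}(q_1)+\cdots+f^{-1}(q_e)$, a sum of $e$ members of $|E|$, which proves the statement with $k=e$. Finally, if $D$ is part of a basis of $\mathrm{Pic}\,X$ then $D$ is primitive, so $D\sim kE$ forces $k=1$ and $|D|=|E|$, whose generic member is a smooth irreducible curve of genus $1$.
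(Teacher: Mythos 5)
Your argument is correct. Note, though, that the paper does not prove this statement at all: it is quoted verbatim as Proposition 2.7 with a citation to [6, Prop.~2.3] (Knutsen), and ultimately goes back to Saint-Donat's classical analysis of linear systems on K3 surfaces; so there is no in-paper proof to compare against. What you have written is essentially a self-contained reconstruction of that classical argument: base-point-freeness via Proposition 2.6, the morphism $\phi$ defined by $|D|$ having one-dimensional image because $D^2=0$, Stein factorization $X\to B$, adjunction with $K_X\sim 0$ giving genus-one fibers, $B\cong\mathbb{P}^1$, and then the identification $f^*\colon H^0(\mathbb{P}^1,\mathcal{O}(e))\xrightarrow{\sim}H^0(X,\mathcal{O}_X(D))$ forcing every member of $|D|$ to be a sum of $e$ fibers, with $e=1$ when $D$ is primitive. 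All the steps check out; the only place where you work harder than necessary is the proof that $B\cong\mathbb{P}^1$: rather than invoking ``numerical $=$ linear equivalence on a K3'' plus injectivity of $f^*$ on Picard groups, one can simply use that for a fibration $f\colon X\to B$ with connected fibers one has $g(B)\le h^1(X,\mathcal{O}_X)=0$ (pull back a nonzero holomorphic $1$-form, or use $\mathcal{O}_B\hookrightarrow$ the Leray argument $H^1(B,\mathcal{O}_B)\hookrightarrow H^1(X,\mathcal{O}_X)$), which is the standard shortcut; your route is nevertheless valid since $\operatorname{Pic}X=\operatorname{NS}(X)$ is torsion-free for a K3. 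Also, when you say every member of $|D|$ is $f^{-1}(q_1)+\cdots+f^{-1}(q_e)$, it is worth being explicit that these are the scheme-theoretic pullbacks $f^*(q_i)$, each linearly equivalent to the general fiber $E$ (they need not be smooth), which is exactly what the proposition's phrasing $E_i\in|E|$ requires.
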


\begin{lemma}
Let $X$ be a K3 surface without -2 divisors. Let $D$ be a divisor on $X$. Then $H^1(X,\mathcal{O}_X(D))=0$ if and only if the following two conditions are satisfied

(i) $D^2\geq -4$

(ii) $\nexists$ a smooth elliptic curve $E$ on $X$ and an integer $k$, $\ni D\sim k E$ and $|k|>1$.

In particular, if $D$ is part of a basis of $Pic\; X$, then (ii) is automatically true.
\end{lemma}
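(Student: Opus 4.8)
The plan is a case analysis on the (even) integer $D^2$, built on two elementary consequences of $X$ having no $-2$ divisor. First, any irreducible curve $\Gamma\subset X$ satisfies $\Gamma^2=2p_a(\Gamma)-2\ge 0$ (the value $-2$ being excluded), so every effective divisor on $X$ has nonnegative self-intersection and, by the usual ``common component'' computation, is nef. Second, every nonempty complete linear system on $X$ is base-point free: its fixed part, which by the classical structure theory of linear systems on K3 surfaces is supported on smooth rational curves, must be empty, so Proposition 2.6 applies. Throughout I use Riemann--Roch on a K3, $\chi(\mathcal{O}_X(D))=2+D^2/2$, and Serre duality, which gives $h^2(\mathcal{O}_X(D))=h^0(\mathcal{O}_X(-D))$ and $h^1(\mathcal{O}_X(D))=h^1(\mathcal{O}_X(-D))$; in particular the statement is symmetric under $D\leftrightarrow -D$.

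For the forward implication, assume $h^1(\mathcal{O}_X(D))=0$. Then $\chi(\mathcal{O}_X(D))=h^0(\mathcal{O}_X(D))+h^0(\mathcal{O}_X(-D))\ge 0$, forcing $D^2\ge -4$, which is (i). For (ii), suppose $D\sim kE$ with $E$ a smooth elliptic curve and $|k|>1$; by the $D\leftrightarrow -D$ symmetry we may take $k\ge 2$. Adjunction gives $\mathcal{O}_X(E)|_E\cong\omega_E\cong\mathcal{O}_E$, so twisting the structure sequence of $E$ yields $0\to\mathcal{O}_X((k-1)E)\to\mathcal{O}_X(kE)\to\mathcal{O}_E\to 0$. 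Since $(k-1)E$ is a nonzero effective divisor, $H^2(\mathcal{O}_X((k-1)E))\cong H^0(\mathcal{O}_X(-(k-1)E))^\vee=0$, so in the long exact sequence the map $H^1(\mathcal{O}_X(kE))\to H^1(\mathcal{O}_E)\cong\mathbb{C}$ is surjective; hence $h^1(D)>0$, contradicting our assumption. So (ii) holds.

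For the converse, assume (i) and (ii); since $D^2$ is even and no $-2$ divisor exists, either $D^2=-4$ or $D^2\ge 0$. If $D^2=-4$, then neither $D$ nor $-D$ is effective (effective divisors have nonnegative square), so $h^0(\mathcal{O}_X(D))=h^2(\mathcal{O}_X(D))=0$, and $\chi(\mathcal{O}_X(D))=0$ forces $h^1(\mathcal{O}_X(D))=0$. If $D^2\ge 0$, then $\chi\ge 2$ gives $h^0(\mathcal{O}_X(D))+h^0(\mathcal{O}_X(-D))\ge 2$, so by symmetry we may assume $D$ is effective (if $D=0$ we are done). When $D^2\ge 2$, $D$ is nef and big, so Ramanujam's vanishing theorem gives $h^1(\mathcal{O}_X(-D))=0$, hence $h^1(\mathcal{O}_X(D))=0$. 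When $D^2=0$ and $D\ne 0$, $|D|$ is base-point free, hence has no fixed components, so Proposition 2.7 gives $D\sim kE$ with $E$ a smooth elliptic curve and $k\ge 1$; (ii) forces $k=1$, and then the sequence $0\to\mathcal{O}_X\to\mathcal{O}_X(E)\to\mathcal{O}_E\to 0$ (again using $\mathcal{O}_X(E)|_E\cong\mathcal{O}_E$), together with $H^1(\mathcal{O}_X)=0$, $H^2(\mathcal{O}_X(E))=0$, and the fact that the connecting map $H^1(\mathcal{O}_E)\to H^2(\mathcal{O}_X)$ is a surjection of one-dimensional spaces, gives $h^1(\mathcal{O}_X(E))=0$. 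Finally, if $D\sim kE$ with $|k|>1$ then $D$ lies in the proper sublattice $k\cdot\mathrm{Pic}\,X$, so $D$ is not primitive and in particular not part of a basis; hence (ii) is automatic for such $D$.

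I expect the only real dependency, beyond routine Riemann--Roch and long-exact-sequence bookkeeping, to be the structure statement that nonempty complete linear systems on a K3 surface without $-2$ curves are base-point free (equivalently, that fixed components of linear systems on a K3 are smooth rational curves). This is what simultaneously licenses the reduction of the $D^2=0$ case to an elliptic pencil via Proposition 2.7 and the nefness that feeds Ramanujam's theorem in the $D^2\ge 2$ case, and I would cite it rather than reprove it.
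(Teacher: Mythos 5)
Your proof is correct and follows essentially the same route as the paper's: Riemann--Roch gives (i) and the $D^2=-4$ case, effective $\Rightarrow$ nef (no $-2$ curves) plus nef-and-big vanishing handles $D^2\ge 2$, and the $D^2=0$ case is reduced to an elliptic pencil via Propositions 2.6--2.7 exactly as in the paper. The only deviations are cosmetic and harmless: for the necessity of (ii) you restrict $\mathcal{O}_X(kE)$ to $E$ instead of the paper's use of $0\to\mathcal{O}_X(-kE)\to\mathcal{O}_X\to\mathcal{O}_{kE}\to 0$ together with $h^0(\mathcal{O}_{kE})=k$, and you prove nefness of effective divisors directly by intersecting with components (and spell out $h^1(\mathcal{O}_X(E))=0$) where the paper argues through base-point freeness and asserts the vanishing.
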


\begin{proof}
By Riemann-Roch,  $H^1(X,\mathcal{O}_X(D))=0$ easily implies  $D^2\geq -4$. $H^1(X,\mathcal{O}_X(D))=0$ also implies (ii). Otherwise, $\exists$ a smooth elliptic curve $E$ on $X$ and an integer $k$, $\ni D\sim k E$ and $|k|>1$. We may assume $k$ is positive, then we have an exact sequence of cohomology groups $0\rightarrow H^0(X,\mathcal{O}_X(-D))\rightarrow H^0(X, \mathcal{O}_X)\rightarrow H^0(kE,\mathcal{O}_{kE})\rightarrow H^1(X,\mathcal{O}_X(-D))\rightarrow 0$. But $dim\;H^0(X, \mathcal{O}_X)=1$ and $dim\; H^0(kE,\mathcal{O}_{kE})=k>1$, so $H^1(X,\mathcal{O}_X(-D))\neq 0$ and hence $H^1(X,\mathcal{O}_X(D))\neq 0$ by Serre duality, contradiction.

On the other hand, suppose both (i) and (ii) are satisfied. Firstly, if $D^2>0$, by R-R, either $|D|$ or $|-D|$ is non-empty. We may assume $D$ is effective. Every irreducible curve $C$ on $X$ has non-negative self-intersection, so the linear system $|C|$ has no fixed components, and hence $|C|$ is base point free by Proposition 2.6. Therefore, every irreducible curve on $X$ is a nef divisor, and hence every effective divisor on $X$ is nef. So $D$ is nef and big, then $H^1(X,\mathcal{O}_X(D))=0$ by Kawamata-Viehweg vanishing Theorem. Secondly, if $D^2=0$, again we may assume $D$ is effective. Clearly $|D|$ doesn't have fixed components, then by Proposition 2.7 and (ii) $|D|$ is actually an elliptic pencil. Therefore, $|D|$ contains an irreducible member, and hence $H^1(X,\mathcal{O}_X(D))=0$. Lastly, if $D^2=-4$,  then both $|D|$ and $|-D|$ are empty, $H^1(X,\mathcal{O}_X(D))=0$ by R-R.

\end{proof}

\vspace{3mm}

\subsubsection{Curves on $X$ deformed with CICY threefolds}

\begin{theorem}
Let $d\geq 1$ and $g\geq 0$ be integers. Then in any of the following cases the general Calabi-Yau complete intersection threefold $Y$ of a particular type contains an isolated, smooth curve of degree $d$ and genus $g$:

\vspace{1mm}

(a) $Y=(5)\subset \mathbb{P}^4:  g=23$ and $d=18$; $g=24$ and $d=19$; $g=26$ and $d=20$; $g=27$ and $d=20$; $g=29$ and $d=21$.

(b)  $Y=(4,2)\subset \mathbb{P}^5:$ $g=16$ and $17\le d\le 19$; $g=17$ and $17\le d\le 20$; $g=18$ and $d=20$; $g=19$ and $d=18,20 \; and\; 21$; $g=20$ and $d=19, 20$; $g=21$ and $d=20, 21$; $g=22$ and $d=20, 21$; $g=23$ and $d=20, 22$; $g=25$ and $d=21, 22$; $g=26$ and $d=22$; $g=27$ and $d=22$; $g=29$ and $d=23$.

(c) $Y=(3,3)\subset \mathbb{P}^5: g=8$ and $d=12$.

(d) $Y=(2,2,3 )\subset \mathbb{P}^6: g=11$ and $d=16$.

(e) $Y=(2,2,2,2 )\subset \mathbb{P}^7:$ $g=4$ and $d=9$; $g=5$ and $d=10, 11$.

\end{theorem}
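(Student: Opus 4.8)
The plan is to verify, for each case $(d,g)$ listed in the statement, the full list of hypotheses (A1)--(A7) of Theorem 0.1 for a suitable choice of surface $X$, line bundle $\mathcal{L}$, and nodal CICY $Y$, so that the conclusion (existence of a smooth isolated curve of degree $d$ and genus $g$ in the general deformation $Y_t$) follows. Concretely, for a given case I would first use Theorem 2.4 (Knutsen's existence theorem [6, Theorem 1.1]) to produce a smooth complete intersection K3 surface $X$ of the appropriate intersection type from Table 1 with $\mathrm{Pic}\,X=\mathbb{Z}H\oplus\mathbb{Z}C$, where $C$ is a smooth irreducible curve of degree $d$ and genus $g$; I set $\mathcal{L}=\mathcal{O}_X(C)$. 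One must check the numerical conditions (i)--(iv) of Theorem 2.4 hold for each pair $(d,g)$ and for the relevant $n$ (i.e. that the K3 lies in the stated projective space and is cut out the right way), and --- crucially for this first theorem --- that the resulting $X$ has \emph{no} $-2$ divisor; by Remark 2.5 this means no smooth rational curve, which by Lemma 2.3 amounts to a numerical nonexistence statement on $\mathbb{Z}H\oplus\mathbb{Z}C$ with the known intersection form.

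Next I would embed $X$ into a nodal CICY threefold $Y$ exactly as in the construction recalled at the start of \S2 (choosing generators $g_i$ of the ideal of $X$ and generic coefficient forms $\alpha_{ij}$), so that $Y$ has precisely $l=a_1\cdots a_{r-4}a_{r-3}^2a_{r-2}^2$ nodes, all lying on $X$, and $S=A\cap B$ for general $A\in|\mathcal{O}_X(a)|$, $B\in|\mathcal{O}_X(b)|$ with $a=a_{r-2}\le b=a_{r-3}$ read off from Table 1. With this setup, conditions (A1), (A2) (trivial canonical bundle and the node count $l\ge\dim|\mathcal{L}+2|$, plus smoothness of $Z$ along $X$), (A3), (A4), (A6), (A7) and the second half of (A5') are handled by citing and applying the relevant lemmas of [5, \S6--\S7] (Lemma 6.1, Lemma 6.2, Proposition 6.5, and the argument of Proposition 7.2), after checking the numerical inequalities those results require for each case. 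This is largely bookkeeping: for every listed $(d,g)$ I would tabulate $d$, $g$, the intersection type, $a$, $b$, $l=ab$, $n=h^0(X,\mathcal{L})=\mathcal{L}^2/2+2$ by Riemann--Roch on $X$, and $\dim|\mathcal{L}|=n-1$, and confirm $l>n$ and $H^0(X,\mathcal{L}(-a))=0$ (which hold by the Picard-lattice description since $\mathcal{L}(-a)=C-aH$ is not effective for the chosen $a$).

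The heart of the proof is the verification of $(\star)$, i.e. that the $l$ nodes $S$ impose independent conditions on $|\mathcal{L}|$; by the reduction in Remark 1.4 and Definition 1.3 this is implied by ``$(A_0,\mathcal{O}_X(b))$ can impose independent conditions on $|\mathcal{L}|$'' for a fixed smooth irreducible $A_0\in|\mathcal{O}_X(a)|$, and I would deduce that from Corollary 1.8. So for each case I must check the two hypotheses of Corollary 1.8: (i) $\tfrac{l}{2}\le\dim|\mathcal{O}_{A_0}(b)|$, which is a Riemann--Roch computation on the curve $A_0$ (its genus is computed by adjunction on $X$, and $\mathcal{O}_{A_0}(b)=bH|_{A_0}$ has degree $abH^2$... so $\dim|\mathcal{O}_{A_0}(b)|=\deg-g(A_0)$ when nonspecial), and (ii) $H^1(X,\mathcal{L}(-a-b))=0$, which here I get from Lemma 2.8: since $X$ has no $-2$ divisor I only need $(\mathcal{L}(-a-b))^2\ge -4$ --- a single numerical inequality in $d$, the intersection numbers, and $a+b$ --- together with the observation that $\mathcal{L}(-a-b)=C-(a+b)H$ is part of a basis of $\mathrm{Pic}\,X$ so condition (ii) of Lemma 2.8 is automatic. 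One must also check $H^1(X,\mathcal{O}_X(b-a))=0$ (the standing hypothesis of Proposition 1.5 and Corollary 1.8), but since $b-a$ is a small nonnegative multiple of $H$ this is standard for a complete intersection K3 (or follows again from Lemma 2.8).

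The main obstacle will be the self-intersection inequality $(\mathcal{L}(-a-b))^2=(C-(a+b)H)^2\ge -4$: this is exactly the numerical constraint that carves out the finite list of pairs $(d,g)$ in the statement, so each case must be checked individually, and there is no conceptual shortcut --- the list is precisely the set of $(d,g)$ for which (a) a no-$(-2)$-divisor K3 of the right type with a degree-$d$ genus-$g$ curve exists (via Theorem 2.4), (b) $l>n$, and (c) $(C-(a+b)H)^2\ge -4$. A secondary point requiring care is confirming in each case that $X$ genuinely has \emph{no} $-2$ divisor, since otherwise Lemma 2.8 does not apply and one would instead be in the situation of Theorem 2.12; establishing this requires knowing (or computing from the Picard lattice) that the relevant quadratic form represents no $-2$. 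Once these numerical facts are assembled into a table, the proof is the routine verification, case by case, that all of (A1)--(A7) hold, followed by a single invocation of Theorem 0.1.
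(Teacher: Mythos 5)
Your proposal follows essentially the same route as the paper: construct the K3 $X$ with $\mathrm{Pic}\,X=\mathbb{Z}H\oplus\mathbb{Z}C$ and no $-2$ divisors via [6, Theorem 1.1], set $\mathcal{L}=\mathcal{O}_X(C)$, embed $X$ in a nodal CICY as in \S 2, verify (A1)--(A7) case by case using the lemmas and the argument of [5, \S 6--\S 7], and establish $(\star)$ through Corollary 1.8, with condition (ii) reduced to $H^1(X,\mathcal{L}(-a-b))=0$ via Lemma 2.8 and the absence of $-2$ divisors. The only minor divergences (deducing $H^0(X,\mathcal{L}(-a))=0$ directly from the lattice rather than from condition (7.1) of [5, Prop.\ 7.2], and the overstated claim that the listed $(d,g)$ are exactly those passing the numerical tests) do not change the substance of the argument.
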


\begin{proof}

Case (a) g=23 and d=18:

By Theorem 2.3, there exists a $K3$ surface $X$ of degree 6 in $\mathbb{P}^4$ with $Pic\; X\cong \mathbb{Z}H\oplus \mathbb{Z}C$, where $H$ is the hyperplane section of $X$ and $C$ is a smooth irreducible curve of degree 18 and genus 23. Clearly, $X$ is actually a complete intersection K3 surface of type (2,3) in $\mathbb{P}^4$. $\mathcal{L}$ is defined to be the line bundle $\mathcal{O}_X(C)$.

Using notations introduced above (also the same notations as in [5, Table 1 in \S 6] except that [5] doesn't use $a$ and $b$).  : $r=4, \mu =4$ (cf. [5, Table 1 in \S 6]), $a_1=3, a_2=2, b_1=5, l=36, g=23, d=18, a=2$ and $b=3$.

There are two conditions in [5, Prop 7.2]:

\vspace{1mm}

(7.1)                                        \centerline{ $d\leq 2a_{r-2}(\mu-1)$ or $da_{r-2}>a_{r-2}^2(\mu-1)+g$}

\vspace{1mm}

(7.2)   \[  a_{r-2}(2a_{r-3}-a_{r-2})(\mu-1)\geq \left\{ \begin{array}{ll}
         g+2 & \mbox{if $a_{r-3}\neq a_{r-2}$};\\
        g+1 & \mbox{if $a_{r-3}=a_{r-2}$}.\end{array} \right. \]

\vspace{1mm}

The condition (7.1) is satisfied since $18\cdot 2>2^2\cdot(4-1)+23$.
The trouble is that condition (7.2) is not satisfied (notice that in the language of Lemma 1.6, condition (7.2) is exactly $n\le N$). However, by looking closely at the proof of [5, Prop 7.2], the condition (7.2) is only used to prove the following two statements: 

Statement 1). $l\geq g+2$, where $l$ is the number of nodes on a general quintic threefold containing $X$ as before.

Statement 2). For general $\alpha_{ij}$, the set of nodes $S$ imposes independent conditions on $|\mathcal{L}|$, $(\star)$.

Therefore, in order to get the conclusion of [5, Prop. 7.2] we only need to show statements 1) and 2).

$l=36>25=g+2$. Statement 1) is proved.

As in \S 1, we let $n=h^0(X, \mathcal{L})=24$.  Next we are going to use Corollary 1.8 to show $(\star)$. 

Notice that in \S 1, we assume throughout $H^0(X,\mathcal{L}(-a))=0$. So we need to show that in current situation we do have $H^0(X,\mathcal{L}(-2))=0$. (Actually, the following proof for $H^0(X,\mathcal{L}(-a))=0$ can be found in the proof of [5, Prop. 7.2]. For the reader's convenience, we repeat it here.)  By [6, Prop. 1.3] $h^1(C', \mathcal{O}_{C'}(a_{r-2}))=0$ for all $C'\in |\mathcal{L}|$ if and only if $$d\leq 2a_{r-2}(\mu-1) \;\text{or}\; da_{r-2}>a_{r-2}^2(\mu-1)+g,$$ which is condition (7.1). Next We note from the cohomology of $$0\longrightarrow \mathcal{L}^{\vee}\longrightarrow \mathcal{O}_X\longrightarrow \mathcal{O}_C\longrightarrow 0$$ twisted by $\mathcal{O}_X(a_{r-2})$, Kodaira vanishing and Serre duality, that $$h^0(X, \mathcal{L}\otimes\mathcal{O}_X(-a_{r-2}))=h^1(\mathcal{O}_C(a_{r-2})),$$ so that also $h^0(X, \mathcal{L}\otimes\mathcal{O}_X(-a_{r-2}))=0$ if condition (7.1) holds, as we have just seen.

 Now as in \S 1 we fix any smooth irreducible $A_0\in |\mathcal{O}_X(2)|$. First of all, condition i) is satisfied since 
$l=36$ and $dim|\mathcal{O}_{A_0}(3)|=23$. In order to show condition ii), we only need to prove $H^1(X,\mathcal{L}(-5))=0$. However, using some softwares (e.g. Mathematica) it is very easy to check that $X$ has no -2 divisors. For example, the following picture shows us how to do this by using Mathematica. We use $(x,y)$ to represent a divisor $xH+yC$.

\vspace{2mm}

\includegraphics{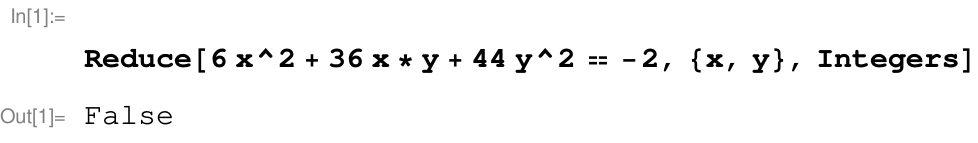}

\vspace{2mm}

By Lemma 2.8, $H^1(X,\mathcal{O}_X(5H-C))=0$ since $(5H-C)^2=14$. By Serre duality, $H^1(X,\mathcal{L}(-5))\cong H^1(X,\mathcal{O}_X(5H-C))^{\vee}=0$. Therefore, condition ii) in Corollary 1.8 is also true. So $(A_0, \mathcal{O}_X(3))$ can impose independent conditions on $|\mathcal{L}|$. Then by Bertini's Theorem, for general $\alpha_{ij}$, the set of nodes $S$ imposes independent conditions on $|\mathcal{L}|$. Therefore, we have the conclusion of [5, Prop7.2], which says the conditions (A1)-(A7)are satisfied. Then by Theorem 0.1, the general quintic threefold in $\mathbb{P}^4$ contains an isolated, smooth curve of degree 18 and genus 23.

\vspace{2 mm}
 The proofs for all the other cases are similar to that for the case (a) $g=23$ and $d=18$. However, we need to specify what the complete intersection type of K3 surface is for each case. The information needed is listed in Table 2 in the Appendix B. (All K3 surfaces $X$ in this table have no -2 divisors.)

 \end{proof}

\subsection{X has -2 divisors}
\subsubsection{Nef cone of K3 surfaces with -2 divisors}

If a K3 surface $X$ does not have -2 divisors, it is very easy to compute  the closed cone of curves $\overline{NE(X)}$ and hence the nef cone $Nef(X)$. (Cf. [8, Corollary 2.3]). If there are -2 divisors in $Pic\; X$, it could be difficult to compute the nef cone of $X$ in general. However, if the Picard number of $X$ is 2, it is not hard to do so. In this subsection we will assume throughout that the K3 surface $X$ has a -2 divisor and its Picard number is 2.

\begin{lemma}
Let X be a smooth projective K3 surface with Picard number 2. Then $\exists H, C\in Pic\;X,\ni Pic\;X=\mathbb{Z}H\oplus \mathbb{Z}C$, $H$ is an ample divisor, $H.C>0,$ and $C^2>0$.
\end{lemma}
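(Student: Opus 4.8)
The plan is to start from an ample class, replace it by a primitive representative, and then correct the second basis vector by an integer multiple of that ample class so as to force positivity of the needed intersection numbers.

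First I would use the fact that for a K3 surface the Picard group $Pic\;X$ agrees with the Néron--Severi group and is a finitely generated free abelian group, here of rank two by hypothesis. Since $X$ is projective, pick any ample divisor and write its class as $mH$ with $m\geq 1$ and $H\in Pic\;X$ primitive. The first step is to observe that $H$ is itself ample: from $(mH)^2>0$ we get $H^2>0$, and from $(mH)\cdot D>0$ for every irreducible curve $D$ we get $H\cdot D>0$, so $H$ is ample by the Nakai--Moishezon criterion. Hence we may assume from the outset that $H$ is a primitive ample class, in particular $H^2>0$.

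Next, since $H$ is primitive in the free $\mathbb{Z}$-module $Pic\;X$ of rank two, it can be completed to a basis, so there is some $C_0\in Pic\;X$ with $Pic\;X=\mathbb{Z}H\oplus\mathbb{Z}C_0$. For any integer $n$ the element $C:=C_0+nH$ also satisfies $Pic\;X=\mathbb{Z}H\oplus\mathbb{Z}C$. I would then simply compute
\[
H\cdot C=H\cdot C_0+nH^2,\qquad C^2=C_0^2+2n\,(H\cdot C_0)+n^2H^2.
\]
Because $H^2>0$, the first quantity is positive for all sufficiently large $n$, and the second, being a quadratic polynomial in $n$ with positive leading coefficient, is positive for all sufficiently large $n$ as well. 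Choosing any $n$ large enough that both inequalities hold produces a basis $H,C$ of $Pic\;X$ with $H$ ample, $H\cdot C>0$ and $C^2>0$, which is exactly the assertion.

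The argument is essentially formal, so I do not anticipate a real obstacle; the only point that needs an honest (though entirely routine) argument is the first step, namely that a primitive representative of an ample class is again ample, handled above via Nakai--Moishezon. Everything after that is bookkeeping about bases of a rank-two lattice.
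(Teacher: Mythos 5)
Your proposal is correct and follows essentially the same route as the paper: take a primitive ample class $H$ (the paper phrases this as choosing the coefficients of $H$ in a basis to be coprime, while you justify ampleness of the primitive representative via Nakai--Moishezon), complete it to a basis $H, C_0$ of $Pic\,X$, and replace $C_0$ by $C = C_0 + nH$ with $n \gg 0$ to force $H\cdot C>0$ and $C^2>0$. The only difference is that you spell out the (routine) ampleness of the primitive class, which the paper leaves implicit.
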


\begin{proof}
Choose any basis for $Pic\; X$, say $Pic\; X=\mathbb{Z}A\oplus\mathbb{Z}B$. Because $X$ is projective, suppose $H=aA+bB$ is an ample class, and we may assume integers a and b are coprime. Then $\exists D\in Pic\;X,\ni Pic\;X=\mathbb{Z}H\oplus\mathbb{Z}D $. Let $C=nH+D$. Obviously, if $n$ is sufficiently large, we have $H.C>0,$ and $C^2>0$.
\end{proof}

Let $H,C$ be as in Lemma 2.2. Let $h=H^2, d=H.C$, and $c=C^2$. By [8, Theorem 2], we know the two boundary rays of $\overline{NE(X)}$ are spanned by: i) either two smooth rational curves; ii) or one smooth rational curve and one rational curve of self-intersection 0.

We will use $(x,y)$ to denote the $\mathbb{R}-$divisor $xH+yC$. Let's look at the following picture.

\includegraphics{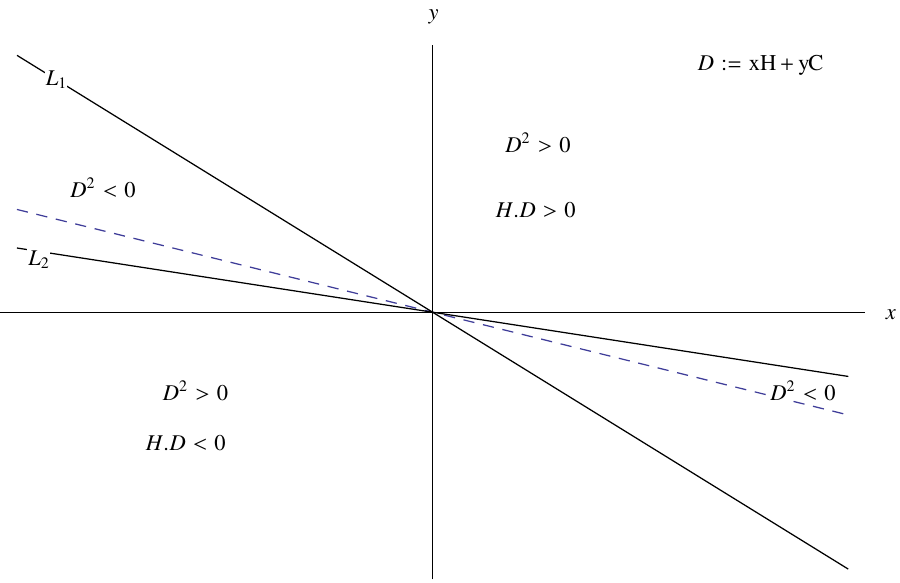}

The two lines $L_1$ and $L_2$ consist of  $\mathbb{R}-$divisors with self-intersection 0. Their slopes are $-\frac{d+\sqrt{d^2-hc}}{c}$ and $-\frac{d-\sqrt{d^2-hc}}{c}$. The dashed line is defined by the set of $\mathbb{R}-$divisors orthogonal to $H$. So its slope is $-\frac{h}{d}$. Notice that we always have $$0<\frac{d-\sqrt{d^2-hc}}{c}<\frac{h}{d}<\frac{d+\sqrt{d^2-hc}}{c}.$$

\begin{lemma}
Let $D=x_0H+y_0C$ be a -2 divisor, where $x_0$ and $y_0$ are integers. Then $D$ represents a smooth rational curve and hence spans a boundary ray of $\overline{NE(X)}$ if and only if the following two conditions are satisfied:

i) $D$ is effective, which implies that $D$ is on the right side of the dashed line,

ii) $\nexists$ integral -2 divisor $E=x_1H+y_1C$, $\ni$ the ratio $\frac{y_1}{x_1}$ is between $\frac{y_0}{x_0}$ and $-\frac{h}{d}$.
\end{lemma}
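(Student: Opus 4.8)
The plan is to prove the two implications by reading the statement as a description of the boundary rays of $\overline{NE(X)}$. Three ingredients carry the argument. First, Riemann--Roch on a K3 surface gives $\chi(\mathcal{O}_X(D))=2+\tfrac12 D^2=1$ for any class with $D^2=-2$, so by Serre duality either $D$ or $-D$ is effective; note also that $D^2=-2$ forces $x_0\neq 0$ (and likewise any $-2$ divisor $E=x_1H+y_1C$ has $x_1\neq 0$), so the ratios in the statement make sense. Second, any irreducible curve $\Gamma\subset X$ with $\Gamma^2<0$ spans a boundary ray of $\overline{NE(X)}$: if $\Gamma\equiv\sum a_iC_i$ with $a_i>0$ and the $C_i$ irreducible and distinct from $\Gamma$, then $\Gamma^2=\sum a_i\,\Gamma\cdot C_i\geq 0$, a contradiction; in particular every smooth rational curve, being a $-2$ curve, spans a boundary ray, which already supplies the word ``hence'' in the statement. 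Third, by [8, Theorem 2] each of the two boundary rays of $\overline{NE(X)}$ is spanned by a curve that is either a smooth rational curve or a rational curve of self-intersection $0$, the latter necessarily lying on one of the two null lines $L_1,L_2$. I will use freely that all nonzero effective classes lie in the open half plane $\{D':D'\cdot H>0\}$ bounded by the dashed line (the locus orthogonal to $H$), that the positive cone is the region strictly above both $L_1$ and $L_2$, and that $\overline{NE(X)}$ is a pointed cone (by projectivity of $X$) containing the closure of the positive cone and contained in $\{D':D'\cdot H>0\}$; hence its two extremal ratios satisfy $\rho_-<-\tfrac hd<\rho_+$, so $-\tfrac hd$ separates them, and $D$ lies strictly on one side of the dashed line according to the sign of $x_0$.

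For $\Rightarrow$: if $D$ represents a smooth rational curve $\Gamma$, then $\Gamma$ is effective and $\Gamma\cdot H>0$ since $H$ is ample; this is exactly (i) (``the right side of the dashed line'' being $\{D':D'\cdot H>0\}$), and $\Gamma$ spans a boundary ray $R$ by the second ingredient. Suppose (ii) failed, i.e.\ there were an integral $-2$ divisor $E=x_1H+y_1C$ with $y_1/x_1$ strictly between $y_0/x_0$ and $-\tfrac hd$. Replacing $E$ by $-E$ if necessary (which does not change the ratio), the first ingredient lets us assume $E$ effective, so $E\in\overline{NE(X)}$. But then $E$ lies on the same side of $-\tfrac hd$ as $\Gamma$, with ratio strictly closer to $-\tfrac hd$ than the ratio of $R$; that is, $E$ lies strictly beyond the boundary ray $R$, contradicting $E\in\overline{NE(X)}$. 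Hence (ii) holds.

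For $\Leftarrow$: assume (i) and (ii), so $D$ is an effective class with $D^2=-2$ and therefore $D\in\overline{NE(X)}$. Let $R$ be the boundary ray of $\overline{NE(X)}$ whose ratio lies on the same side of $-\tfrac hd$ as $y_0/x_0$; its ratio then lies between $y_0/x_0$ and $-\tfrac hd$, while the other boundary ray has ratio on the far side of $-\tfrac hd$ and is irrelevant. If $R$ has the same ratio as $D$, then $R$ cannot be spanned by a self-intersection-$0$ curve $E_0$, since that would make $[D]$ a positive multiple of $[E_0]$ and force $D^2=0\neq-2$; so by [8, Theorem 2] it is spanned by a smooth rational curve $\Gamma$, and writing $[D]=t[\Gamma]$ and comparing $-2=D^2=t^2\Gamma^2=-2t^2$ forces $t=1$, hence $D\sim\Gamma$ (numerical and linear equivalence coincide on a K3), as desired. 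If instead the ratio of $R$ is strictly between $y_0/x_0$ and $-\tfrac hd$, then the curve spanning $R$ is, by [8, Theorem 2], either a smooth rational curve --- an integral $-2$ divisor violating (ii) --- or a rational curve $E_0$ with $E_0^2=0$, whose class lies on $L_1$ or $L_2$ and strictly between $D$ and the dashed line, which forces $D$ into the open positive cone and contradicts $D^2=-2<0$. Both alternatives being impossible, $R$ has the same ratio as $D$, so $D$ represents a smooth rational curve.

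I expect the main obstacle to be the bookkeeping with the ratios in the picture: the quantity $y/x$ does not record which of two opposite rays a class lies on, so one must consistently track the sign of $x_0$ and the side of $-\tfrac hd$, and one must verify that the self-intersection-$0$ alternative for $R$ in the last step is genuinely vacuous --- this is the one place where it matters that $D^2=-2$ rather than merely $D^2<0$, being precisely the observation that a class on a null line strictly between $D$ and the dashed line would be forced into the open positive cone. A minor additional point is that [8, Theorem 2] is needed not only for the trichotomy of extremal curves but to guarantee that each boundary ray of $\overline{NE(X)}$ is spanned by an honest rational curve at all (rather than being an irrational limiting ray).
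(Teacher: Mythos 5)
Your proof is correct and follows essentially the same route as the paper's: effectivity of a $-2$ class (or its negative) via Riemann--Roch, the fact that an irreducible negative curve spans a boundary ray, and Kov\'acs' theorem [8, Theorem 2] for the converse. The paper's own proof is just a terse version of this; your extra bookkeeping (ruling out the self-intersection-$0$ boundary curve, the $t=1$ step, and the sign/ratio conventions) fills in details the paper leaves implicit.
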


\begin{proof}
Suppose $D$ represents a smooth rational curve. Obviously i) is satisfied.  If $\exists$ integral -2 divisor $E=x_1H+y_1C$, $\ni$ the ratio $\frac{y_1}{x_1}$ is between $\frac{y_0}{x_0}$ and $-\frac{h}{d}$. Then $E$ is effective and hence $E\in \overline{NE(X)}$, which means the ray spanned by $D$ is not a boundary ray, contradiction. So ii) is satisfied.

Suppose both i) and ii) are satisfied. Then $D\in \overline{NE(X)}$. By ii) and [8, Theorem 2], $D$ spans a boundary ray of $\overline{NE(X)}$ and $D$ represents a smooth rational curve.

\end{proof}

Now let's analysis the slopes of the lines spanned by integral -2 divisors. Suppose $D=xH+yC$ is an integral -2 divisor. By definition, $hx^2+2dxy+cy^2=-2$. Then $\frac{y}{x}=-\frac{d\pm \sqrt{d^2-c(h+\frac{2}{x^2})} }{c}$. When $x$ goes to infinity, the line spanned by $D$ approaches the red lines in the picture above. Therefore, in order to find $D$ satisfying both i) and ii) in Lemma 2.11, we only need to find integral -2 divisors $D$ with $x$ coordinate ``small'', which is pretty easy with the help of some softwares(e.g. Mathematica).

Let's do an example in the following. Suppose $X$ is a smooth projective K3 surface with $Pic\;X=\mathbb{Z}H\oplus \mathbb{Z}C$, $H$ is ample, $h=H^2=6$, $d=H.C=19$, and $c=C^2=48$.

\vspace{2mm}
First step: Use Mathematica to determine all -2 divisors satisfying i) in Lemma 2.11. (Notice that no solutions means there are no -2 divisors at all.)

\vspace{5mm}

\includegraphics{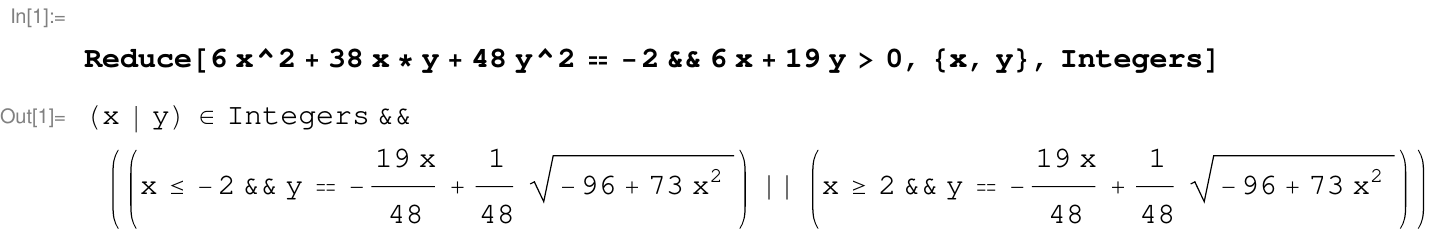}

\vspace{2mm}
From the result of the first step, in order to find integral -2 divisors $D$ satisfying both i) and ii) in Lemma 2.11, we only need to find the integral -2 divisors $D$ with $x$ as small as possible.

Second step: find -2 divisors with small $x$ coordinates:

\vspace{2mm}

\includegraphics{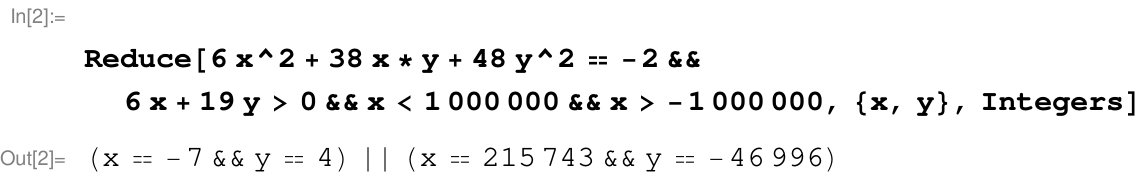}

\vspace{3mm}
Therefore, $\overline{NE(X)}$ is the closed cone spanned by $-7H+4C$ and $215743H-46996C$. Because the nef cone is the dual of $\overline{NE(X)}$, $Nef(X)$ is spanned by  $-59H+34C$ and $1843309H-401534C$.

\vspace{2mm}

\subsubsection{Curves on $X$ deformed with CICY threefolds}

\vspace{5mm}

\begin{theorem}
 Let $d\geq 1$ and $g\geq 0$ be integers. Then in any of the following cases the general Calabi-Yau complete intersection threefold $Y$ of a particular type contains an isolated, smooth curve of degree $d$ and genus $g$:

\vspace{5mm}

(a) $Y=(5)\subset \mathbb{P}^4:  g=23$ and $d=19$; $g=24$ and $d=20$; $g=25$ and $d=19, 20$.

(b)  $Y=(4,2)\subset \mathbb{P}^5:$ $g=16$ and $d=20$; $g=18$ and $d=18, 19$; $g=19$ and $d=19$; $g=20$ and $d=21$; $g=21$ and $d=19$; $g=23$ and $d=21$;  $g=24$ and $d=21, 22$; $g=27$ and $d=23$; $g=28$ and $d=23$.

(c) $Y=(2,2,3 )\subset \mathbb{P}^6: g=4$ and $d=8$.

(d) $Y=(2,2,2,2 )\subset \mathbb{P}^7:$ $g=4$ and $d=10$; $g=6$ and $d=11$.

\end{theorem}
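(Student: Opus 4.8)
The plan is to run, for each listed triple (CICY type, $g$, $d$), exactly the machine used in the proof of Theorem 2.9, changing only the verification of $(\star)$. For each case I would first invoke Theorem 2.3 to produce a complete-intersection K3 surface $X$ --- of a type recorded, together with the integers $a=a_{r-2}\le b=a_{r-3}$, in a table as in Theorem 2.9 --- with $Pic\,X=\mathbb{Z}H\oplus\mathbb{Z}C$, where $H$ is the hyperplane section and $C$ is a smooth irreducible curve of degree $d$ and genus $g$, and set $\mathcal{L}=\mathcal{O}_X(C)$. Embedding $X$ into a nodal CICY threefold $Y$ by the construction of \S 2 (with general coefficients $\alpha_{ij}$), the node set becomes $S=A\cap B$ with $A\in|\mathcal{O}_X(a)|$ and $B\in|\mathcal{O}_X(b)|$. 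Then the conditions (A1)--(A4), (A6), (A7), the second half of (A5'), the inequality $l\ge g+2$, and the vanishing $H^0(X,\mathcal{L}(-a))=0$ (via [6, Prop.~1.3] and condition (7.1)) are all checked exactly as in the proof of Theorem 2.9, using [5, Lemma 6.1, Lemma 6.2, Prop.~6.5] and the relevant parts of the proof of [5, Prop.~7.2]; that $X$ now carries a $-2$ divisor plays no role in any of these.

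To obtain $(\star)$ I would apply Corollary 1.8 with $A_0\in|\mathcal{O}_X(a)|$ any fixed smooth irreducible member. Its hypothesis $H^1(X,\mathcal{O}_X(b-a))=0$ is immediate, since $(b-a)H$ is a nonnegative multiple of the ample class $H$ (Kodaira vanishing, or triviality when $a=b$). Condition (i), $\frac{l}{2}\le\dim|\mathcal{O}_{A_0}(b)|$, reduces to $g_{A_0}\le\frac{l}{2}$, where $2g_{A_0}-2=a^2H^2$ and $\deg_{A_0}\mathcal{O}_{A_0}(b)=abH^2=l$: when $a<b$ this follows from Riemann--Roch on $A_0$, and when $a=b$ one notes that $\mathcal{O}_{A_0}(b)\cong\omega_{A_0}$ by adjunction, which gives equality. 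So $(\star)$ comes down entirely to condition (ii), and by the last assertion of Corollary 1.8 it suffices to prove $H^1(X,\mathcal{L}(-a-b))=0$, equivalently, by Serre duality, $H^1(X,\mathcal{O}_X((a+b)H-C))=0$.

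This last vanishing is the only genuinely new point, and is where the apparatus of \S 2.2.1 enters: because $X$ has Picard number $2$ and a $-2$ divisor, Lemma 2.8 no longer applies. Case by case I would normalize the basis of $Pic\,X$ as in Lemma 2.10, then use Lemma 2.11 together with a short software-assisted enumeration of the $-2$ divisors $xH+yC$ with $|x|$ small to pin down the two extremal rays of $\overline{NE(X)}$, and dualize to read off the two rays of $Nef(X)$. With the cones explicit one locates the class $D:=(a+b)H-C$: in the cases at hand $D$ lies in $Nef(X)$ with $D^2>0$, so $H^1(X,\mathcal{O}_X(D))=0$ by the Kawamata--Viehweg vanishing theorem. (If $D$ were not nef one would first subtract off its negative part --- a combination of the extremal $-2$ curves visible on the cone --- to reduce to a nef and big class, using the standard cohomology of line bundles on K3 surfaces; and if neither $\pm D$ were effective, Riemann--Roch would give the vanishing, the only delicate subcase being $D^2=-2$, which is excluded by checking that $D$ is not a $-2$ class.) Once $H^1(X,\mathcal{L}(-a-b))=0$ is known, $(\star)$ follows by Bertini exactly as in Theorem 2.9, all of (A1)--(A7) hold, and Theorem 0.1 yields the asserted smooth isolated curve of degree $d$ and genus $g$ in the general $Y$.

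The main obstacle I anticipate is precisely the per-case computation of $\overline{NE(X)}$ and $Nef(X)$ together with the ensuing verification that $D=(a+b)H-C$ lies in $Nef(X)$ and is big --- this is the content not already contained in [5] or in the proof of Theorem 2.9 --- while keeping track of the $D^2=-2$ edge case; everything else is a routine transcription of the argument for K3 surfaces without $-2$ divisors.
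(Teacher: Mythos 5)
Your overall framework is exactly the paper's: Theorem 2.3 for the K3 surface, the embedding of \S 2 into a nodal CICY, the reduction of everything except $(\star)$ to [5] as in Theorem 2.9, and Corollary 1.8 (with the cone machinery of \S 2.2.1) for $(\star)$; your uniform Riemann--Roch verification of condition (i) of Corollary 1.8 is fine and even tidier than the paper's case-by-case numerics. The gap is in the one step that is genuinely new, the vanishing $H^1(X,\mathcal{O}_X(D))=0$ for $D=(a+b)H-C$. Your main claim that ``in the cases at hand $D$ lies in $Nef(X)$ with $D^2>0$'' is false for more than half of the listed cases: in the paper's Table 3 one has $D^2=-2$ in ten cases (e.g. $(g,d)=(25,20),(16,20),(20,21),(24,22),(28,23),(4,8),(4,10),(6,11)$) and $D^2=-4$ in two (e.g. $(24,20),(27,23)$), so Kawamata--Viehweg covers only the remaining eight.

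Your fallbacks do not repair this. When $D^2=-2$ the divisor $D$ \emph{is} a $(-2)$ class, so your remark that the delicate subcase $D^2=-2$ ``is excluded by checking that $D$ is not a $-2$ class'' is simply wrong in these cases; moreover $D$ is effective there, so the ``neither $\pm D$ effective, use Riemann--Roch'' branch does not apply either. And the ``subtract the negative part to reduce to a nef and big class'' suggestion collapses precisely here: in these cases $D$ turns out to be an extremal irreducible $(-2)$ curve, so its Zariski decomposition has trivial positive part, and in any event $h^1(\mathcal{O}_X(D))$ for an effective $D$ is not controlled by the positive part alone (one has $h^1(\mathcal{O}_X(D))=h^0(\mathcal{O}_D)-1$). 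The correct argument, which is what the paper does, is to use the computed $\overline{NE(X)}$ and Lemma 2.11 to verify that $D$ spans a boundary ray and is represented by a smooth rational curve; then $h^0(\mathcal{O}_X(D))=1$, $h^0(\mathcal{O}_X(-D))=0$, and Riemann--Roch gives $h^1(\mathcal{O}_X(D))=0$. For the $D^2=-4$ cases one checks that both $|D|$ and $|-D|$ are empty and again applies Riemann--Roch. So the cone computations are not merely a tool to show $D$ is nef; in most of the cases of this theorem they are needed to identify $D$ as an irreducible $(-2)$ curve, and without that identification your proof of condition (ii) of Corollary 1.8 fails for those cases.
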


\begin{proof}
Case (a) g=25 and d=19:

By Theorem 2.3, there exists a $K3$ surface $X$ of degree 6 in $\mathbb{P}^4$ with $Pic\; X\cong \mathbb{Z}H\oplus \mathbb{Z}C$, where $H$ is the hyperplane section of $X$ and $C$ is a smooth irreducible curve of degree 19 and genus 25. Clearly, $X$ is actually a complete intersection K3 surface of type (2,3) in $\mathbb{P}^4$. $\mathcal{L}$ is defined to be the line bundle $\mathcal{O}_X(C)$.

Using notations introduced above: $r=4, \mu =4, a_1=3, a_2=2, b_1=5, l=36, g=25, d=19, a=2$ and $b=3$.

Similar to the proof of Theorem 2.9, the condition (7.1) is satisfied since $19\cdot 2>2^2\cdot(4-1)+25$, and the condition (7.2) is not satisfied. As before we only need to prove the following two statements to  get the conclusion of [5, Prop. 7.2]:

Statement 1). $l\geq g+2$, where $l$ is the number of nodes on a general quintic threefold containing $X$ as before.

Statement 2). For general $\alpha_{ij}$, the set of nodes $S$ imposes independent conditions on $|\mathcal{L}|$, $(\star)$.

$l=36>27=g+2$. Statement 1) is proved.

As in \S 1, we let $n=h^0(X, \mathcal{L})=26$.  Next we are going to use Corollary 1.8 to show $(\star)$. 

Notice that in \S 1, we assume throughout $H^0(X,\mathcal{L}(-a))=0$. So we need to show that in current situation we do have $H^0(X,\mathcal{L}(-a))=0$. Actually, we have proved that the condition (7.1) implies $H^0(X,\mathcal{L}(-a))=0$ in the proof of Theorem 2.9.

 Now as in \S 1 we fix any smooth irreducible $A_0\in |\mathcal{O}_X(2)|$. First of all, condition i) in Cor. 1.8 is satisfied since 
$l=36$ and $dim|\mathcal{O}_{A_0}(3)|=23$. In order to show condition ii), we only need to prove $H^1(X,\mathcal{L}(-5))=0$. Using the method introduced above we can find the nef cone $Nef(X)$ explicitly. $Nef(X)$ is spanned by  $-59H+34C$ and $1843309H-401534C$. Therefore $5H-C$ is a nef divisor. $ (5H-C)^2=8>0$, so $5H-C$ is nef and big and hence $H^1(X,\mathcal{L}(-5))\cong H^1(X,\mathcal{O}_X(5H-C))^{\vee}=0$ by Kawamata-Viehweg vanishing and Serre duality.  Therefore, condition ii) in Corollary 1.8 is also true. So $(A_0, \mathcal{O}_X(3))$ can impose independent conditions on $|\mathcal{L}|$. Then by Bertini's Theorem, for general $\alpha_{ij}$, the set of nodes $S$ imposes independent conditions on $|\mathcal{L}|$. Therefore, we have the conclusion of [5, Prop7.2], which says the conditions (A1)-(A7)are satisfied. Then by Theorem 0.1, the general quintic threefold in $\mathbb{P}^4$ contains an isolated, smooth curve of degree 19 and genus 25.

\vspace{5 mm}
 The proofs for all the other cases are similar to that for the case (a) $g=25$ and $d=19$. However, as before we need to specify what the complete intersection type of K3 surface is for each case. The information needed is listed in Table 3 in the Appendix B. (All K3 surfaces $X$ in this table have -2 divisors.)

\vspace{5 mm}

\end{proof}

\begin{center}
\section{Curves on rational surfaces in nodal Calabi-Yau threefolds}
\end{center}

\begin{theorem}
Let $d\geq 1$ and $g\geq 0$ be integers. Then in any of the following cases the general Calabi-Yau complete intersection threefold $Y$ of a particular type contains an isolated, smooth curve of degree $d$ and genus $g$:

(i) $Y=(3,3)\subset \mathbb{P}^5: g=3$ and $d=6$.

(ii) $Y=(2,4)\subset \mathbb{P}^5: g=3$ and $d=6$.

\end{theorem}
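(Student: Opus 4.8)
\emph{Plan of proof.} The plan is to verify, for each of the two cases, the hypotheses (A1)--(A7) of Theorem 0.1 with a suitable rational surface $X$, line bundle $\mathcal{L}$, and nodal CICY threefold $Y_0\supset X$; Theorem 0.1 then produces a smooth isolated curve of degree $6$ and genus $3$ in the general deformation $Y_t$. In case (i) I take $X$ to be $\mathbb{P}^2$ blown up at six general points, realized as a smooth cubic surface in a linear $\mathbb{P}^3\subset\mathbb{P}^5$, so that $\mathcal{O}_X(1)=H=-K_X$, and set $\mathcal{L}=\mathcal{O}_X(C)$ with $C=H+\ell$, $\ell$ the pull-back of $\mathcal{O}_{\mathbb{P}^2}(1)$; then $C^2=10$, $C\cdot H=6$, $g(C)=3$, and $Y_0=Z(s_{0,1},s_{0,2})$ for a general pair of cubics vanishing on $X$. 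This is the setting of Theorem 0.1 with $P=\mathbb{P}^5$, $r=5$, $\mathcal{E}=\mathcal{O}_{\mathbb{P}^5}(3)^{\oplus 2}$ and $\mathcal{M}_{r-3}=\mathcal{O}_{\mathbb{P}^5}(3)$. In case (ii) I take $X=(2)\subset\mathbb{P}^3\subset\mathbb{P}^5$, i.e. $\mathbb{P}^1\times\mathbb{P}^1$ with $\mathcal{O}_X(1)=\mathcal{O}_X(1,1)$, $\mathcal{L}=\mathcal{O}_X(2,4)$ (so $C^2=16$, $C\cdot H=6$, $g(C)=3$), and $Y_0=Z(s_{0,1},s_{0,2})$ for a general quadric and quartic vanishing on $X$; now $\mathcal{E}=\mathcal{O}_{\mathbb{P}^5}(2)\oplus\mathcal{O}_{\mathbb{P}^5}(4)$, $\mathcal{M}_{r-3}=\mathcal{O}_{\mathbb{P}^5}(4)$. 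A Bertini analysis in the style of [5, \S 6] should show that for general coefficients $Y_0$ acquires only ordinary double points, all lying on $X$, and that the node set $S$ is the rank-drop locus of an auxiliary $2\times3$ matrix of forms; its two $2\times2$ minors restrict on $X$ to two general members $A,B$ of $|\mathcal{O}_X(a)|$, whence $a=b=2$ and $l=|S|=(2H)^2=12$ in case (i), and $a=b=3$, $l=(3H)^2=18$ in case (ii).

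I would next dispose of all conditions except $(\star)$. (A1) is adjunction for nodal complete intersections, $\omega_{Y_0}=\mathcal{O}_{Y_0}$; the remaining part of (A2) (that $Z=Z(s_{0,1})$ is smooth along $X$ and that $S$ consists of the only singularities of $Y_0$ on $X$) follows from genericity of the coefficient forms, and the numerical part $l\geq\dim|\mathcal{L}|+2$ reads $12\geq10$, resp. $18\geq16$. Since $-K_X$ is very ample on the cubic surface and $\ell$ is globally generated, $\mathcal{L}=H+\ell$ is very ample in case (i); $\mathcal{O}_X(2,4)$ is visibly very ample in case (ii); so (A3) and (A4) follow from Bertini. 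For (A6) I would use the filtration $C\subset X\subset\mathbb{P}^3\subset\mathbb{P}^5$ and observe that each of $\mathcal{N}_{C/X}=\mathcal{O}_C(C)$, $\mathcal{N}_{X/\mathbb{P}^3}|_C$ and $\mathcal{N}_{\mathbb{P}^3/\mathbb{P}^5}|_C=\mathcal{O}_C(1)^{\oplus 2}$ has degree exceeding $2g(C)-2=4$, hence $H^1=0$ for each, giving $H^1(C,\mathcal{N}_{C/\mathbb{P}^5})=0$ as in part of [5, Prop.\ 7.2]. For the second part of (A5$'$): $H^1(C,\mathcal{N}_{C/X})=H^1(C,\mathcal{O}_C(C))=0$ since $C^2>2g(C)-2$, while $H^0(C,\mathcal{N}_{X/Y_0}\otimes\mathcal{O}_C)=0$ because by adjunction this sheaf agrees off $S$ with $\omega_X\otimes\mathcal{O}_C$, of negative degree ($K_X\cdot C<0$); hence $\gamma_C$ is (trivially) an isomorphism. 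For (A7): the restriction $H^0(\mathbb{P}^5,\mathcal{M}_{r-3})\to H^0(X,\mathcal{M}_{r-3}|_X)$ is surjective, the Koszul resolution of $X\subset\mathbb{P}^5$ having terms with no intermediate cohomology, so one need only analyse $H^0(X,\mathcal{M}_{r-3}|_X)\to H^0(S,\mathcal{O}_S)\cong\mathbb{C}^l$; its cokernel is $H^1(X,\mathcal{I}_{S/X}\otimes\mathcal{M}_{r-3}|_X)$, and since $\mathcal{M}_{r-3}|_X\otimes\mathcal{O}_X(-A-B)\cong\omega_X$ in both cases, the Koszul resolution of $S=A\cap B$ together with $H^1(X,\mathcal{M}_{r-3}|_X(-A))=H^1(X,\mathcal{M}_{r-3}|_X(-B))=0$ and $H^2(X,\omega_X)\cong\mathbb{C}$ shows this cokernel is exactly one-dimensional, i.e. the image has codimension one, exactly as in [5, Lemma 6.2].

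The core of the proof is $(\star)$, for which I would invoke Corollary 1.8. First the standing assumptions of \S 1: $l>n=h^0(X,\mathcal{L})$ (namely $12>9$ and $18>15$) and $H^0(X,\mathcal{L}(-a))=0$ --- the class $\mathcal{L}(-a)=\mathcal{O}_X(C-2H)$ has degree $0$ against the ample $-K_X$ yet is nonzero, so it is non-effective, and in case (ii) it is $\mathcal{O}_X(-1,1)$. Then $H^1(X,\mathcal{O}_X(b-a))=H^1(X,\mathcal{O}_X)=0$ since $a=b$. Condition (i) of Corollary 1.8 reads $\tfrac{l}{2}\leq\dim|\mathcal{O}_{A_0}(b)|$: here $A_0\in|\mathcal{O}_X(a)|$ is a smooth curve of genus $4$, $\mathcal{O}_{A_0}(b)$ has degree $l>2g(A_0)-2$, so $\dim|\mathcal{O}_{A_0}(b)|=l-4$ by Riemann--Roch, and $\tfrac{l}{2}\leq l-4$ holds as $l\geq8$. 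Condition (ii) of Corollary 1.8 follows from $H^1(X,\mathcal{L}(-a-b))=0$, the one genuinely delicate vanishing: in case (ii) this is $H^1(\mathbb{P}^1\times\mathbb{P}^1,\mathcal{O}(-4,-2))=0$ by Künneth, while in case (i) it is $H^1(X,\mathcal{O}_X(C-4H))$, which by Serre duality (recall $K_X=-H$) is dual to $H^1(X,\mathcal{O}_X(K_X+(4H-C)))$; one checks the divisor $4H-C$ is ample on the cubic surface by verifying $(4H-C)^2>0$ and $(4H-C)\cdot\ell'>0$ for each of the $27$ lines $\ell'$, and then Kodaira vanishing finishes it. Granting $(\star)$, all of (A1)--(A7) hold and Theorem 0.1 gives the result.

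The principal obstacle I anticipate is twofold. First, pinning down the nodal model: one must carry out the Bertini analysis (along the lines of [5, \S 6], Lemmas 6.1--6.2 and Prop.\ 6.5) to be certain that the general $Y_0$ in the relevant family has exactly $l$ ordinary double points, all of them on $X$, forming the complete intersection $A\cap B$ with $A,B\in|\mathcal{O}_X(a)|$ --- equivalently, to justify the values of $a$, $b$ and $l$ used above. Second, the vanishing $H^1(X,\mathcal{L}(-a-b))=0$ in case (i): since $\mathcal{L}(-a-b)$ carries no positivity, one is forced through Serre duality to a twist of $K_X$ whose ampleness on the cubic surface must be checked against all $27$ lines, a finite but not entirely automatic verification. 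Everything else reduces to routine cohomology on del Pezzo surfaces and on $\mathbb{P}^1\times\mathbb{P}^1$, together with the Fano/Kodaira vanishing machinery already used in the proofs of Theorem 2.9 and Theorem 2.12.
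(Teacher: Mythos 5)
Your proposal is correct and follows essentially the same route as the paper: the same choices of $X$ and $\mathcal{L}$ in both cases, the same numerics ($a=b=2$, $l=12$, $n=9$ for (i); $a=b=3$, $l=18$, $n=15$ for (ii)), and the same reduction to checking (A1)--(A7) with $(\star)$ handled by Corollary 1.8. The only notable (cosmetic) differences are that the paper obtains condition ii) of Corollary 1.8 in case (i) more directly, via Lemma 3.3 ($H^0(A,\mathcal{O}_A(H_A-l_A))=0$ from $H^0(X,\mathcal{O}_X(H-l))=0$ and Kodaira vanishing on $X$), so no ampleness check of $4H-C$ against the $27$ lines is needed, and that for the second half of (A5$'$) it quotes the duality $H^0(C,\mathcal{N}_{X/Y}\otimes\mathcal{O}_C)\cong H^1(C,\mathcal{N}_{C/X})$ of [5, (4.2)] rather than your ``agrees off $S$'' degree argument, which is the safer formulation when $C$ passes through nodes.
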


\begin{remark}
In order to prove this theorem, we choose appropriate complete intersection rational surfaces $X$ and line bundles $\mathcal{L}$ on $X$, check conditions (A1)-(A7) in [5] and then apply Theorem 0.1. For case (i) we choose $X=(3)\subset\mathbb{P}^3, \mathcal{L}=\mathcal{O}_X(H+l)$(which will be explained in the proof); for case (ii) we choose $X=(2)\subset\mathbb{P}^3, \mathcal{L}=(2,4)\in \mathbb{Z}\times\mathbb{Z}\cong Pic(X)$.(cf. [4, Ch.V \S 4] )

The proof for case (ii) is completely similar to that for case (i), so we will only prove case (i).
\end{remark}

Notations as in [5]: \\$\mathcal{P}=\mathbb{P}^5$ , $\mathcal{E}=\mathcal{M}_1\oplus\mathcal{M}_2=\mathcal{O}_{\mathbb{P}^5}(3)\oplus\mathcal{O}_{\mathbb{P}^5}(3)$, \\ $X$ is a smooth cubic surface in $\mathbb{P}^3\subset\mathbb{P}^5$ defined by a cubic homogeneous form $F=0$ (for  simplicity, we will assume $F=x_0^3+x_1^3+x_2^3+x_3^3$).\\ $Z=\{$\~{F}$=x_0^3+x_1^3+x_2^3+x_3^3+x_4^3+x_5^3=0\}\subset\mathbb{P}^5$ smooth cubic hypersurface containing $X$, \\ $Y=\{$\~{F}$=x_4G+x_5P=0 \}\subset Z$ where $G, P\in H^0(\mathbb{P}^5, \mathcal{O}_{\mathbb{P}^5}(2))$.
\\  (*) $X\cong$ Blow-up of $\mathbb{P}^2$ at six points. $H=3l-\sum_{i=1}^{6} e_i$, where $l$ is pullback of $\mathcal{O}_{\mathbb{P}^2}(1)$, $H$ is the hyperplane section of $X$. Let $\mathcal{L}$ be the line bundle $\mathcal{O}_X(H+l)$ on $X$.

\begin{lemma}
Suppose $A$ is a smooth curve of genus 4 and degree 6 in $\mathbb{P}^3$, and $A$ is complete intersection of a smooth quadric surface and a smooth cubic surface X. Then $H^0(A, \mathcal{O}_A(H_A-l_A))=0$,  where $H_A$ and $l_A$ are restriction of $H$ and $l$ to $A$($H$ and $l$ which are line bundles on $X$ are defined as above) .
\end{lemma}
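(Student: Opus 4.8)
The plan is to reduce the statement to two cohomology vanishings on the cubic surface $X$. First I would pin down the class of $A$ on $X$: since $A$ is cut out on $X$ by a quadric surface in $\mathbb{P}^3$, it lies in $|2H|$, and this is consistent with the numerics, $\deg A = 2H^2 = 6$ and $2g-2 = 2H\cdot(2H+K_X) = 2H\cdot H = 6$, using $K_X = -H$ for a cubic surface. With $H = 3l - \sum_{i=1}^{6} e_i$ one has $l^2 = 1$, $l\cdot e_i = 0$, $e_i^2 = -1$, hence $l\cdot H = 3$ and $H^2 = 3$, so $(H-l)\cdot A = (H-l)\cdot 2H = 2(3-3) = 0$. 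Thus $\mathcal{O}_A(H_A - l_A)$ has degree $0$, and the task is precisely to show it is not the trivial bundle.

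Next I would feed the class $A \sim 2H$ into the restriction exact sequence on $X$: since $\mathcal{O}_X(H-l-A) = \mathcal{O}_X(-H-l)$, we get
\[
0 \longrightarrow \mathcal{O}_X(-H-l) \longrightarrow \mathcal{O}_X(H-l) \longrightarrow \mathcal{O}_A(H_A - l_A) \longrightarrow 0 .
\]
Taking cohomology, $H^0(A, \mathcal{O}_A(H_A-l_A))$ sits in an exact sequence with $H^0(X, \mathcal{O}_X(H-l))$ on the left and $H^1(X, \mathcal{O}_X(-H-l))$ on the right, so it is enough to show both of those vanish. For the left term, $H-l = 2l - \sum_{i=1}^{6} e_i$, whose sections correspond to conics in $\mathbb{P}^2$ passing through the six points blown up to produce $X$; since $X$ is a smooth cubic surface, those six points are in general position and in particular do not all lie on a conic, so $H^0(X, \mathcal{O}_X(H-l)) = 0$. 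For the right term, Serre duality on the cubic surface (with $K_X = -H$) identifies $H^1(X, \mathcal{O}_X(-H-l))$ with the dual of $H^1(X, \mathcal{O}_X(K_X + H + l)) = H^1(X, \mathcal{O}_X(l))$, and $l = \pi^*\mathcal{O}_{\mathbb{P}^2}(1)$ for the blow-down $\pi : X \to \mathbb{P}^2$; since $\pi_*\mathcal{O}_X = \mathcal{O}_{\mathbb{P}^2}$ and $R^q\pi_*\mathcal{O}_X = 0$ for $q > 0$, the Leray spectral sequence gives $H^1(X, \mathcal{O}_X(l)) \cong H^1(\mathbb{P}^2, \mathcal{O}_{\mathbb{P}^2}(1)) = 0$ (alternatively, write $\mathcal{O}_X(l) = K_X + (H+l)$ with $H+l$ ample and apply Kawamata--Viehweg vanishing). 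Combining the two vanishings with the long exact sequence yields $H^0(A, \mathcal{O}_A(H_A - l_A)) = 0$.

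The only step where genuine geometry enters is the vanishing $H^0(X, \mathcal{O}_X(H-l)) = 0$, i.e.\ the classical fact that the six points blown up to obtain a smooth cubic surface cannot lie on a conic; everything else is bookkeeping with intersection numbers on the del Pezzo surface, Serre duality, and the blow-down map. So that is the point I would state most carefully, checking that the ``general position'' conditions packaged into ``$X$ is a smooth cubic surface'' really do preclude a conic through all six points. (One could instead argue directly that $l_A \not\cong \omega_A = H_A$ using degrees and the geometry of $\pi|_A$, but the cohomological route above is cleaner and self-contained.)
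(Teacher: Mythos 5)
Your proposal is correct and follows essentially the same route as the paper: restrict $\mathcal{O}_X(H-l)$ to $A\sim 2H$ via the sequence $0\to\mathcal{O}_X(-H-l)\to\mathcal{O}_X(H-l)\to\mathcal{O}_A(H_A-l_A)\to 0$ and kill the two outer cohomology groups. The only (harmless) differences are in the justifications: the paper gets $H^0(X,\mathcal{O}_X(H-l))=0$ from $(H-l)\cdot H=0$ with $\mathcal{O}_X(H)\ncong\mathcal{O}_X(l)$ rather than from the no-conic-through-six-points fact, and it quotes Kodaira vanishing for $H^1(X,\mathcal{O}_X(-H-l))=0$ where you pass through Serre duality and Leray.
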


\begin{proof}
Consider short exact sequence of sheaves on $X$ : $$0\rightarrow \mathcal{O}_{X}(-2H)\rightarrow \mathcal{O}_X\rightarrow \mathcal{O}_A\rightarrow 0$$ tensoring $\mathcal{O}_{X}(H-l)$ , we get $$0\rightarrow \mathcal{O}_{X}(-H-l)\rightarrow \mathcal{O}_X(H-l)\rightarrow \mathcal{O}_A(H_A-l_A)\rightarrow 0$$

Taking cohomology groups, we have $H^0(X,\mathcal{O}_{X}(H-l))\rightarrow H^0(A, \mathcal{O}_A(H_A-l_A))\rightarrow H^1(X, \mathcal{O}_X(-H-l))$ exact.  $H^0(X,\mathcal{O}_{X}(H-l))=0$ since $(H-l).H=0$ and $\mathcal{O}_X(H)\ncong\mathcal{O}_X(l)$,  $H^1(X, \mathcal{O}_X(-H-l))=0$ (by Kodaira vanishing), then $H^0(A, \mathcal{O}_A(H_A-l_A))=0$ .

\end{proof}

\begin{lemma}
There exist $G$, $P$ satisfying the following conditions: \\ 1) $Y$ only has nodal singularities which are in $X$ \\ 2) $\{ G=x_4=x_5=0\}\subset \mathbb{P}^3$ is a smooth quadric surface. $A:=X\cap \{G=0 \}$ is a smooth curve of genus 4 and degree 6 \\ 3) $S:= $the set of nodes of $Y=X\cap\{ G=P=0\}=$12 distinct points \\ 4) $S$ imposes independent conditions on $\mid\mathcal{L}\mid$
\end{lemma}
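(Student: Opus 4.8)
The plan is to establish conditions 1)--4) essentially one at a time, reducing each to a Bertini-type genericity argument combined with the cohomological input already available. Here $X = \{F = 0\} \subset \mathbb{P}^3 \subset \mathbb{P}^5$ is our fixed smooth cubic surface, and the free parameters are the two quadratic forms $G, P \in H^0(\mathbb{P}^5, \mathcal{O}_{\mathbb{P}^5}(2))$ defining $Y = Z(\widetilde{F}, x_4 G + x_5 P)$ inside the smooth cubic fourfold $Z$. Recall from the setting that the nodes of $Y$ are exactly the points where $x_4 = x_5 = 0$ and $G = P = 0$, i.e. on the cubic surface $X$ they are cut out by the two quadrics $G|_X$ and $P|_X$; thus in the notation of \S 1 we have $a = b = 2$, $|\mathcal{O}_X(a)| = |\mathcal{O}_X(2H)|$, and $l = (2H)^2 = 4 \cdot 3 = 12$ (distinct when $G|_X \neq P|_X$, which is generic). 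I would treat $(G, P)$ as ranging over a suitable open dense subset of $H^0(\mathbb{P}^5,\mathcal{O}(2))^{\times 2}$ and show each of 1)--4) holds on a dense open subset, then intersect.

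For 2), note $\{G = x_4 = x_5 = 0\}$ is a quadric surface $Q_G \subset \mathbb{P}^3$; for general $G$ this is smooth, and then $A := X \cap Q_G = X \cap \{G|_X = 0\}$ is a member of $|\mathcal{O}_X(2H)|$, which is very ample on $X$ (since $-K_X = H$ is ample and $2H$ is very ample on a del Pezzo surface), so by Bertini the general $A$ is smooth and irreducible; its genus and degree are computed by adjunction on $X$: $\deg A = 2H \cdot H = 6$ and $2g_A - 2 = (2H)(2H + K_X) = (2H)(H) = 6$, giving $g_A = 4$, as claimed. For 1), the threefold $Y$ lies in the smooth $Z$ and is cut out by the single extra form $x_4 G + x_5 P$; this is exactly the local complete intersection / nodal-threefold construction of \S 2 (Table 1, row $(2,4) \subset \mathbb{P}^5$ with $r = 5$), and the standard Bertini argument there---which the paper attributes to [5, \S 6]---shows that for general $(G,P)$ the only singularities of $Y$ are ordinary double points lying on $X$, of number $l = 12$. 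For 3), once $G|_X, P|_X$ are two general quadrics on $X$ their complete intersection $S = X \cap \{G = P = 0\}$ is $12$ reduced points (Bertini again: the linear system $|\mathcal{O}_X(2H)|$ is base-point-free and very ample, so a general pencil member meets a fixed general member transversally).

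The substantive point is 4). I would apply Method I, specifically Corollary 1.8, with $a = b = 2$, $\mathcal{L} = \mathcal{O}_X(H + l)$, and the fixed smooth irreducible $A_0 = A \in |\mathcal{O}_X(2H)|$ produced in 2). First I must check the standing hypothesis $(1.1)$ of \S 1: $l = 12 > n = h^0(X, \mathcal{L})$ and $H^0(X, \mathcal{L}(-2H)) = 0$. Here $\mathcal{L}(-2H) = \mathcal{O}_X(H + l - 2H) = \mathcal{O}_X(l - H)$, and since $(l - H)\cdot H = l\cdot H - H^2 = 1 - 3 = -2 < 0$ with $H$ ample, this has no sections; and $n = h^0(X, \mathcal{O}_X(H+l))$ is computed by Riemann--Roch on the del Pezzo surface ($H^1 = H^2 = 0$ by Kodaira vanishing applied to $K_X + (\text{ample})$, so $n = \chi(\mathcal{O}_X(H+l)) = 1 + \tfrac12 (H+l)(H+l-K_X) = 1 + \tfrac12(H+l)(2H+l)$; with $H\cdot l = 1$, $H^2 = 3$, $l^2 = 1$ this is $1 + \tfrac12(6 + 3 + 2 + 1) = 1 + 6 = 7$, in particular $< 12$). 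Then Corollary 1.8 requires (i) $\tfrac{l}{2} = 6 \le \dim |\mathcal{O}_{A_0}(2H)|$ and (ii) $H^0(A_0, \mathcal{L}\otimes\mathcal{O}_{A_0}(-2H)) = 0$, the latter being implied by $H^1(X, \mathcal{L}(-a-b)) = H^1(X, \mathcal{O}_X(H + l - 4H)) = H^1(X, \mathcal{O}_X(l - 3H)) = 0$. For (i): by the exact sequence $0 \to \mathcal{O}_X(0) \to \mathcal{O}_X(2H) \to \mathcal{O}_{A_0}(2H) \to 0$ and $H^1(\mathcal{O}_X) = 0$ we get $h^0(A_0, \mathcal{O}_{A_0}(2H)) = h^0(X,\mathcal{O}_X(2H)) - 1$; the restriction of $|\mathcal{O}_X(2H)|$ to $A_0$ embeds $A_0$ (it is very ample) so $\dim|\mathcal{O}_{A_0}(2H)| = h^0(X,\mathcal{O}_X(2H)) - 2$, and $h^0(X, \mathcal{O}_X(2H)) = \chi(\mathcal{O}_X(2H)) = 1 + \tfrac12(2H)(3H) = 1 + 9 = 10$, giving $\dim|\mathcal{O}_{A_0}(2H)| = 8 \ge 6$. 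For (ii), the cohomological vanishing: $H^1(X, \mathcal{O}_X(l - 3H)) = 0$; this is the one genuinely delicate-looking step and is exactly the $(\star)$-type vanishing Lemma~\ref{} in \S 3 is designed to handle, but on the del Pezzo surface $X$ it follows from Serre duality $H^1(X, \mathcal{O}_X(l - 3H)) \cong H^1(X, \mathcal{O}_X(K_X + 3H - l))^\vee = H^1(X, \mathcal{O}_X(2H - l))^\vee$ together with Kodaira vanishing, since $2H - l = 2(3l - \sum e_i) - l = 5l - 2\sum e_i$ has positive intersection with every line and with $H$, hence (one checks via the explicit $(-1)$-curves on the cubic surface) is ample, so $H^1$ of $K_X + (\text{ample})$ vanishes. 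I expect this ampleness/vanishing verification on the cubic surface to be the main obstacle---one has to rule out all $27$ lines (and conics) as obstructions---but it is a finite, mechanical check. Granting it, Corollary 1.8 gives that $(A_0, \mathcal{O}_X(2H))$ can impose independent conditions on $|\mathcal{L}|$: there is $B \in |\mathcal{O}_X(2H)|$ (coming from a quadric $P$) with $A_0 \cap B$ a set of $12$ distinct points imposing independent conditions on $|\mathcal{L}|$; since $A_0$ was an arbitrary smooth member of $|\mathcal{O}_X(2H)|$ and (as in Remark 1.7 of \S 1) the locus of such pairs is dense in $|\mathcal{O}_X(2H)|\times|\mathcal{O}_X(2H)|$, for general $(G, P)$ the set of nodes $S = X \cap \{G = P = 0\}$ imposes independent conditions on $|\mathcal{L}|$, which is 4). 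Finally, intersecting the four dense open conditions produces the desired $(G,P)$.
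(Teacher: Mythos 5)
Your overall route is the same as the paper's: Bertini for conditions 1)--3), then Method I (Corollary 1.8) with $a=b=2$ and $A_0=A$ for condition 4). The one genuine difference is how condition ii) of Corollary 1.8 is checked: you go through the sufficient surface-level vanishing $H^1(X,\mathcal{L}(-a-b))=0$, whereas the paper proves the curve-level statement directly (Lemma 3.3), twisting $0\to\mathcal{O}_X(-2H)\to\mathcal{O}_X\to\mathcal{O}_A\to 0$ by $H-l$ and using only $H^0(X,\mathcal{O}_X(H-l))=0$ and Kodaira vanishing for $H^1(X,\mathcal{O}_X(-H-l))$; note the bundle in question has degree $0$ on $A$, so the vanishing for $(H-l)|_A$ and for $(l-H)|_A$ are equivalent, and the paper's route avoids any surface $H^1$ computation.

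Your verification of the vanishing, however, contains concrete errors traceable to one wrong intersection number: since $H=3l-\sum_{i=1}^{6}e_i$, one has $H\cdot l=3$, not $1$. Hence $n=h^0(X,\mathcal{L})=9$ (the paper's value), not $7$, and $(l-H)\cdot H=0$, not $-2$, so to conclude $H^0(X,\mathcal{L}(-2H))=0$ you must also observe $l\not\sim H$, as the paper does. More seriously, $2H-l=5l-2\sum e_i$ is \emph{not} ample: it has degree $0$ on the conic classes $2l-e_{i_1}-\cdots-e_{i_5}$, which are among the $27$ lines. And even if it were ample, that is not what Kodaira vanishing requires for $H^1(X,\mathcal{O}_X(2H-l))$: the relevant divisor is $2H-l-K_X=3H-l$. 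Fortunately $3H-l=8l-3\sum e_i$ meets the three types of lines in degrees $3$, $2$, $1$, hence is ample, so $H^1(X,\mathcal{O}_X(2H-l))=H^1(X,K_X+(3H-l))=0$ and, by Serre duality, $H^1(X,\mathcal{O}_X(l-3H))=0$ as you wanted. With that correction your argument closes; the remaining steps (condition i) via $\dim|\mathcal{O}_A(2H)|=8\ge 6$, and the passage from one good pair to general $(G,P)$, which the paper carries out by lifting the divisor on $A$ through the surjection $H^0(\mathbb{P}^5,\mathcal{O}_{\mathbb{P}^5}(2))\to H^0(A,\mathcal{O}_A(2))$ and intersecting with the open set where 1)--3) hold) agree with the paper's proof.
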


\begin{proof} 
By Bertini's Theorem, for general $(G, P)\in H^0(\mathbb{P}^5,\mathcal{O}_{\mathbb{P}^5}(2))\times H^0(\mathbb{P}^5,\mathcal{O}_{\mathbb{P}^5}(2)) $ , conditions 1)-3) are satisfied. That means $\exists $ Zariski open dense subset $U$ of $H^0(\mathbb{P}^5,\mathcal{O}_{\mathbb{P}^5}(2))\times H^0(\mathbb{P}^5,\mathcal{O}_{\mathbb{P}^5}(2))$ such that $\forall (G, P)\in U$ , conditions 1)-3) are satisfied. Now fix $(G, P)\in U$ . 

By Lemma 3.3, the condition ii) in Corollary 1.8 is satisfied. 

The conditions i) in corollary 1.8 are satisfied since using the same notations as in \S 1 $a=b=2, l=12$ and  $n=h^0(X, \mathcal{L})=9$.

$H^0(X,\mathcal{O}_{X}(\mathcal{L}-2H))\cong H^0(X,\mathcal{O}_{X}(l-H))=0$ since $(l-H).H=0$ and $\mathcal{O}_X(H)\ncong\mathcal{O}_X(l)$. Therefore, the assumption (1) in the \S 1 is also satisfied. So by corollary 1.8, $(A, \mathcal{O}_X(2H))$ can imposes independent conditions on $|\mathcal{L}|$.

 Notice that the natural restriction map $\rho : H^0(\mathbb{P}^5, \mathcal{O}_{\mathbb{P}^5}(2))\rightarrow H^0(A, \mathcal{O}_A(2))$ is surjective, so for general $P'\in H^0(\mathbb{P}^5, \mathcal{O}_{\mathbb{P}^5}(2)), \rho (P')$ viewed as an element in $|2H_A|$ imposes independent conditions on $|\mathcal{L}|$. Therefore, $\exists P'\in H^0(\mathbb{P}^5, \mathcal{O}_{\mathbb{P}^5}(2)) $ such that the $(G, P')\in U$ and $\rho (P')$ imposes independent conditions on $|\mathcal{L}|$ .

\end{proof}

\begin{proof} $of$ $Theorem$ $3.1$ (case (i)):  Notations as in Lemma 3.4. We will check the conditions (A1)-(A7) listed in the introduction. \\ (A1): Trivial; \\ (A2) By definition  $Z$ is a smooth hypersurface in $\mathbb{P}^5$ and hence smooth along $X$. The only singularities of $Y$ which lie in $X$ are 12 nodes $\xi_1,..., \xi_{12}$ . Furthermore, by R-R for $X$ , $h^0(X, \mathcal{L})=9$ and hence  $$12\geq dim|\mathcal{L}|+2=10 ;$$ \\ (A3) and (A4): Trivial (Simply because $\mathcal{L}$ is very ample on $X$); \\ (A5): (A5) is equivalent to the condition (A5') : The set of nodes $S$ imposes independent conditions on $|\mathcal{L}|$ and the natural map $\gamma_{C}: H^0(C, \mathcal{N}_{X/Y}\otimes \mathcal{O}_{C})\rightarrow H^1(C, \mathcal{N}_{C/X})$ is an isomorphism for all $C\in |\mathcal{L}|$ . 

Let $C\in |\mathcal{L}|$ , we have exact sequence $$0\rightarrow \mathcal{O}_X\rightarrow \mathcal{L}\rightarrow \mathcal{N}_{C/X}\rightarrow 0$$  taking cohomology groups, we get exact sequence $$H^1(X, \mathcal{L})\rightarrow H^1(C, \mathcal{N}_{C/X})\rightarrow H^2(X, \mathcal{O}_X) $$   Then $H^1(C, \mathcal{N}_{C/X})=0$ because $H^1(X, \mathcal{L})=0$ and $H^2(X, \mathcal{O}_{X})=0$ . By Serre duality, $ H^0(C, \mathcal{N}_{X/Y}\otimes \mathcal{O}_{C})\cong H^1(C, \mathcal{N}_{C/X})$ (cf. [5, (4.2)]) , so we have $ H^0(C, \mathcal{N}_{X/Y}\otimes \mathcal{O}_{C})= H^1(C, \mathcal{N}_{C/X})=0$ and hence $\gamma_{C}$ is an isomorphism for trivial reason. By condition 4) in Lemma 3.4, $S$ imposes independent conditions on $|\mathcal{L}|$ . Therefore, (A5') is satisfied; \\ (A6): $\forall C\in |\mathcal{L}|$ we have exact sequence (cf. [5, (3.9)]) $$0\rightarrow \mathcal{N}_{C/X}\rightarrow \mathcal{N}_{C/\mathbb{P}^5}\rightarrow \mathcal{N}_{X/\mathbb{P}^5}\otimes \mathcal{O}_{C}\rightarrow 0$$ 

Again taking cohomology groups, we get exact sequence $$H^1(C, \mathcal{N}_{C/X})\rightarrow H^1(C, \mathcal{N}_{C/\mathbb{P}^5})\rightarrow H^1(C, \mathcal{N}_{X/\mathbb{P}^5}\otimes \mathcal{O}_{C}) $$ \\ $H^1(C, \mathcal{N}_{X/\mathbb{P}^5}\otimes \mathcal{O}_{C})\cong H^1(C,\mathcal{O}_{C}(3)\oplus \mathcal{O}_{C}(1)\oplus \mathcal{O}_{C}(1))$ , and we have seen $ H^1(C, \mathcal{N}_{C/X})=0$ . In order to show $H^1(C, \mathcal{N}_{C/\mathbb{P}^5})=0$, we just need to show $H^1(C, \mathcal{O}_{C}(1))=H^1(C, \mathcal{O}_{C}(3))=0$ . But by considering exact sequences $0\rightarrow \mathcal{O}_{X}(H-\mathcal{L})\rightarrow \mathcal{O}_X(1)\rightarrow \mathcal{O}_{C}(1)\rightarrow 0$ and $0\rightarrow \mathcal{O}_{X}(3H-\mathcal{L})\rightarrow \mathcal{O}_X(3)\rightarrow \mathcal{O}_{C}(3)\rightarrow 0$ , it's easy to show that $H^1(C, \mathcal{O}_{C}(1))=H^1(C, \mathcal{O}_{C}(3))=0$ ; \\ (A7): We want to show the image of the natural restriction map $$H^0(\mathbb{P}^5 , \mathcal{O}_{\mathbb{P}^5}(3))\rightarrow H^0(S, \mathcal{O}_{\mathbb{P}^5}(3)\otimes \mathcal{O}_{S})$$ has codimension one. Notice that the natural restriction map $$H^0(\mathbb{P}^5 , \mathcal{O}_{\mathbb{P}^5}(3))\rightarrow H^0(A, \mathcal{O}_{A}(3))$$ is surjective, where $A=X\cap\{ G=0\}$ as in Lemma 3.4. Therefore, we just need to show the image of the natural restriction map $$H^0(A , \mathcal{O}_{A}(3))\rightarrow H^0(S, \mathcal{O}_{S}(3))$$ has codimension one. Clearly we have the following exact sequence: $$0\rightarrow \mathcal{O}_{A}(1)\rightarrow \mathcal{O}_A(3)\rightarrow\mathcal{O}_S(3)\rightarrow 0$$  Taking cohomology groups we get exact sequence $$H^0(A , \mathcal{O}_{A}(3))\rightarrow H^0(S, \mathcal{O}_{S}(3))\rightarrow H^1(A , \mathcal{O}_{A}(1))\rightarrow H^1(A, \mathcal{O}_{A}(3))$$ but $H^1(A , \mathcal{O}_{A}(1))\cong H^0(A, \omega_A\otimes \mathcal{O}_{A}(-1))^{\vee}=H^0(A, \mathcal{O}_{A})^{\vee}\cong \mathbb{C}$ and $H^1(A , \mathcal{O}_{A}(3))\cong H^0(A, \omega_A\otimes \mathcal{O}_{A}(-3))^{\vee}=H^0(A, \mathcal{O}_{A}(-2))^{\vee}=0$ . So (A7) is satisfied.

\end{proof}
\newpage
\appendix

\section{Method II}

As mentioned in Page 6, we present another method to check the condition ($\star$) in this appendix. We follow notations and conventions used in \S 1.

\begin{lemma}
Define $Z:=\{(D, B, C)\in A_0^{(n)}\times |\mathcal{O}_{A_0}(b)|\times|\mathcal{L}\otimes\mathcal{O}_{A_0}|  | D\subset B,D\subset C\}$. Let $p: Z\longrightarrow |\mathcal{O}_{A_0}(b)|$ be the projection map. We assume $H^1(X, \mathcal{O}_{X}(b-a))=0$. Then $(A_0, \mathcal{O}_{A_0}(b))$ can impose independent conditions on $|\mathcal{L}|$ if $p$ is not surjective.
\end{lemma}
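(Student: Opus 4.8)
The plan is to run an argument of the same shape as the proof of Proposition 1.5, but to replace its irreducibility input by a direct genericity argument made available by the hypothesis that $p$ is not surjective; in particular, no irreducibility of $Z$ will be needed. First I would note that $Z$ is a closed subscheme of the product $A_0^{(n)}\times|\mathcal{O}_{A_0}(b)|\times|\mathcal{L}\otimes\mathcal{O}_{A_0}|$, being cut out by the two incidence conditions $D\le B$ and $D\le C$; since the three factors are projective, $Z$ is projective and hence $p(Z)$ is Zariski closed in $|\mathcal{O}_{A_0}(b)|$. Thus if $p$ is not surjective, $p(Z)$ is a proper closed subset and its complement $V$ is open and dense. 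I would also record that the locus $U\subset|\mathcal{O}_{A_0}(b)|$ of divisors consisting of $l$ distinct points is open and dense: $\mathcal{O}_X(b)$ is globally generated, hence so is its restriction $\mathcal{O}_{A_0}(b)$, so $|\mathcal{O}_{A_0}(b)|$ is base point free and, in characteristic zero, its general member is reduced. Choose $B_0\in U\cap V$.

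The main step is to show that $B_0$, viewed as a reduced $0$-cycle of length $l$ on $X$, imposes independent conditions on $|\mathcal{L}|$. Suppose not. Since $|B_0|=l>n$, Remark 1.3 yields a subset $D_0\subset B_0$ consisting of $n$ distinct points which lies on some member $C_0\in|\mathcal{L}|$. Because $H^0(X,\mathcal{L}(-a))=0$, the curve $A_0$ is not a component of $C_0$, so $C_0\cap A_0$ is an honest effective divisor on $A_0$, it belongs to $|\mathcal{L}\otimes\mathcal{O}_{A_0}|$, and $D_0\le C_0\cap A_0$. Hence $(D_0,B_0,C_0\cap A_0)\in Z$ and $B_0\in p(Z)$, contradicting $B_0\in V$. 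Therefore $B_0$ imposes independent conditions on $|\mathcal{L}|$.

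It then remains to use the hypothesis $H^1(X,\mathcal{O}_X(b-a))=0$ exactly as in Proposition 1.5: from the exact sequence $0\to\mathcal{O}_X(b-a)\to\mathcal{O}_X(b)\to\mathcal{O}_{A_0}(b)\to 0$ the restriction $H^0(X,\mathcal{O}_X(b))\to H^0(A_0,\mathcal{O}_{A_0}(b))$ is surjective, so $B_0=A_0\cap B$ for some $B\in|\mathcal{O}_X(b)|$; then $A_0\cap B$ is a set of $l$ distinct points imposing independent conditions on $|\mathcal{L}|$, which is precisely what Definition 1.4 requires for the pair $(A_0,\mathcal{O}_X(b))$. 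I expect the middle step to be the only delicate point: one must be careful that the failure of independent conditions genuinely produces $n$ \emph{distinct} points of $B_0$ on a single member of $|\mathcal{L}|$ — not merely at most $n-1$ points, nor a non-reduced subscheme — and that the resulting triple really defines a point of $Z$; this is exactly where the standing hypotheses $l>n$ and $H^0(X,\mathcal{L}(-a))=0$ are used.
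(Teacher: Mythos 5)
Your proposal is correct and follows essentially the same route as the paper's proof: choose a reduced divisor in $|\mathcal{O}_{A_0}(b)|$ avoiding the closed set $p(Z)$, use $H^0(X,\mathcal{L}(-a))=0$ to pass from members of $|\mathcal{L}|$ to divisors in $|\mathcal{L}\otimes\mathcal{O}_{A_0}|$, invoke the numerical criterion of Remark 1.2 together with $l>n$, and lift to $|\mathcal{O}_X(b)|$ via the surjectivity coming from $H^1(X,\mathcal{O}_X(b-a))=0$. The only differences are cosmetic (you make the closedness of $p(Z)$ and the reducedness genericity explicit, lift $B_0$ at the end rather than at the start, and your references ``Remark 1.3''/``Definition 1.4'' should be Remark 1.2 and Definition 1.3).
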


\begin{proof}
Because $H^0(X, \mathcal{L}(-a))=0$, so the natural restriction map $H^0(X, \mathcal{L})\longrightarrow H^0(A_0, \mathcal{L}\otimes\mathcal{O}_{A_0})$ is injective. Then any element $C\in |\mathcal{L}|$ corresponds to a divisor $C\cap A_0\in |\mathcal{L}\otimes\mathcal{O}_{A_0}|$. 

Suppose $p$ is not surjective. Then $|\mathcal{O}_{A_0}(b)|\setminus p(Z)$ is open dense in $|\mathcal{O}_{A_0}(b)|$. Because $H^1(X, \mathcal{O}_{X}(b-a))=0$, the natural restriction map $H^0(X, \mathcal{O}_X(b))\longrightarrow H^0(A_0, \mathcal{O}_{A_0}(b))$ is surjective. So there exists $B\in |\mathcal{O}_X(b)|$ such that $B\cap A_0$ is a set of $l$ distinct points and $B\cap A_0 \in |\mathcal{O}_{A_0}(b)|\setminus p(Z)$. But $B\cap A_0\in |\mathcal{O}_{A_0}(b)|\setminus p(Z)$ precisely means that no member of $|\mathcal{L}\otimes\mathcal{O}_{A_0}|$ can contain $n$ points of $B\cap A_0$. Consequently, at most $n-1$ points of $B\cap A_0$ lies on an element of $|\mathcal{L}|$.
\end{proof}

\begin{remark}
If the restriction map $H^0(X, \mathcal{L})\longrightarrow H^0(A_0, \mathcal{L}\otimes\mathcal{O}_{A_0})$ is an isomorphism, then $(A_0, \mathcal{O}_{X}(b))$ can impose independent conditions on $|\mathcal{L}|$ if and only if $p$ is not surjective.

\end{remark}

 Next consider the following diagram:

\xymatrix{
\phi^{-1}(\mathcal{O}_{A_0}(b)\otimes\mathcal{L}^{\vee}) \ar@{^{(}->}[r] &A_0^{(l-n)}\times A_0^{(m-n)}\ar[r]^-\phi  & Pic^{l-m}A_0\\
Z \ar[u]^q\ar[r]^p &|\mathcal{O}_{A_0}(b)|}

\vspace{2 mm}

where $m=c_1(\mathcal{O}_X(a))\cdot c_1(\mathcal{L}), \phi (D_1,D_2)=\mathcal{O}_{A}(D_1-D_2)$, $q(D,B,C)=(B-D, C-D)$ and $p$ is the natural projection map. In order to show that $p$ is not surjective, it suffices to show that $dimZ<dim|\mathcal{O}_{A_0}(b)|$ . Roughly speaking, $Z$ has ``small" dimension if the fiber $\phi^{-1}(\mathcal{O}_{A_0}(b)\otimes\mathcal{L}^{\vee})$ has ``small" dimension and the fibers of the map $q$ has ``small" dimension. Finally, we need to do the following two steps:

\vspace{6 mm}
 $\mathbf{Step 1}:$ Show that the fiber $\phi^{-1}(\mathcal{O}_{A_0}(b)\otimes\mathcal{L}^{\vee})$ has minimum dimension,

\vspace{8 mm}
 $\mathbf{Step 2)}$:Analyze the map $q$ to show that general fiber of $q$ have small dimensions and the dimensions of special fibers are not too large.

The following general result provide us a way to do Step 1) in the special case $m-n=g_{A_0}-1$.

\begin{lemma}
Let $A$ be a smooth projective curve. Consider the difference map $\phi : A^{(r)}\times A^{(g_A-1)}\longrightarrow Pic^{r-g_A+1}A$, where $\phi (D_1, D_2)=\mathcal{O}_A(D_1-D_2)$, $g_A>1$ and $r>0$. $\mathcal{L}$ is a line bundle on $A$ of degree $r-g_A+1$. Then $dim\phi^{-1}(\mathcal{L})=r-1$ if and only if $H^0(A, \mathcal{L}^{\vee})=0$.
\end{lemma}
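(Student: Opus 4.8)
The object to understand is the fiber $Z:=\phi^{-1}(\mathcal{L})$, which we regard as the incidence variety
\[
Z=\bigl\{(D_1,D_2)\in A^{(r)}\times A^{(g_A-1)}\ \big|\ D_1-D_2\sim\mathcal{L}\bigr\},
\]
$\sim$ denoting linear equivalence. The plan is to estimate $\dim Z$ by comparing it, fiber-by-fiber over the first factor $A^{(r)}$, with a second and much more transparent incidence variety built from the linear system $|K_A+\mathcal{L}|$; Serre duality will match the two.

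For the implication ``$\dim\phi^{-1}(\mathcal{L})=r-1\Rightarrow H^0(A,\mathcal{L}^\vee)=0$'' I would argue by contraposition. If $H^0(A,\mathcal{L}^\vee)\neq 0$ then $\deg\mathcal{L}^\vee=g_A-1-r\geq 0$, so we may fix an effective divisor $E_0$ of degree $g_A-1-r$ with $\mathcal{O}_A(E_0)\cong\mathcal{L}^\vee$. For every $D_1\in A^{(r)}$ the divisor $D_1+E_0$ is effective of degree $g_A-1$ and $D_1-(D_1+E_0)=-E_0\sim\mathcal{L}$, so $(D_1,D_1+E_0)\in Z$. Thus $Z$ contains a copy of $A^{(r)}$ and $\dim Z\geq r>r-1$.

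For the converse, assume $H^0(A,\mathcal{L}^\vee)=0$. By Riemann--Roch and Serre duality, $h^0(K_A+\mathcal{L})=\deg(K_A+\mathcal{L})-g_A+1+h^0(\mathcal{L}^\vee)=(g_A-1+r)-g_A+1+0=r$, so $|K_A+\mathcal{L}|\cong\mathbb{P}^{r-1}$. I would then introduce
\[
\mathcal{I}:=\bigl\{(D_1,G)\in A^{(r)}\times|K_A+\mathcal{L}|\ \big|\ G-D_1\text{ is effective}\bigr\}.
\]
The projection $\mathcal{I}\to|K_A+\mathcal{L}|$ is surjective (an effective divisor of degree $g_A-1+r\geq r$ always has a degree-$r$ subdivisor) with finite fibers (for fixed $G$ there are only finitely many $D_1\leq G$), so $\dim\mathcal{I}=\dim|K_A+\mathcal{L}|=r-1$. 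The crucial step is to show $\dim Z=\dim\mathcal{I}$. Both $Z$ and $\mathcal{I}$ carry projections to $A^{(r)}$, with fibers over $D_1$ equal to $|\mathcal{O}_A(D_1)\otimes\mathcal{L}^\vee|$ and $|(K_A+\mathcal{L})(-D_1)|$ respectively; since $\deg(\mathcal{O}_A(D_1)\otimes\mathcal{L}^\vee)=g_A-1$ has Euler characteristic $0$, Riemann--Roch gives $h^0(\mathcal{O}_A(D_1)\otimes\mathcal{L}^\vee)=h^1(\mathcal{O}_A(D_1)\otimes\mathcal{L}^\vee)=h^0((K_A+\mathcal{L})(-D_1))$ by Serre duality. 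Hence the two projections have the same image $V\subseteq A^{(r)}$ and equal fiber dimension at every point of $V$. Stratifying $V$ by the value of $h^0(\mathcal{O}_A(D_1)\otimes\mathcal{L}^\vee)$ and computing $\dim Z$ and $\dim\mathcal{I}$ one stratum at a time (over each stratum both families are equidimensional) yields $\dim Z=\dim\mathcal{I}=r-1$.

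I expect the fiberwise comparison in the last step to be the main obstacle: neither $Z$ nor $\mathcal{I}$ is irreducible a priori, and one must treat uniformly the loci in $A^{(r)}$ where $h^0(\mathcal{O}_A(D_1)\otimes\mathcal{L}^\vee)$ jumps. This is precisely the point at which Serre duality pays off, converting the hard-to-control special divisors governing the fibers of $Z$ into the manifestly finite fibers of $\mathcal{I}\to|K_A+\mathcal{L}|$. As a byproduct the argument also shows that $\phi^{-1}(\mathcal{L})$ is always nonempty, since $\mathcal{I}\neq\emptyset$.
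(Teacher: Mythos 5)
Your argument is correct and is essentially the paper's own proof in different packaging: your incidence variety $\mathcal{I}$ is, via $(D_1,G)\mapsto (D_1,G-D_1)$, exactly the fiber $\psi^{-1}(\omega_A\otimes\mathcal{L})$ of the paper's composite map $\psi=g\circ f$, your finiteness of $\mathcal{I}\to|K_A+\mathcal{L}|$ is the paper's finiteness of the addition map $f$ over the Abel--Jacobi fiber $g^{-1}(\omega_A\otimes\mathcal{L})=|K_A+\mathcal{L}|$, and your fiberwise Serre-duality comparison over $A^{(r)}$ (using that degree $g_A-1$ has Euler characteristic $0$) is precisely the paper's Riemann--Roch identification $\dim\alpha^{-1}(\mathcal{G}_1,\mathcal{G}_2)=\dim\alpha^{-1}(\beta(\mathcal{G}_1,\mathcal{G}_2))$. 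The only minor divergence is that the paper reads both implications off the single formula $\dim\phi^{-1}(\mathcal{L})=r+\dim|\mathcal{L}^{\vee}|$, whereas you prove the ``only if'' half separately, and slightly more elementarily, by exhibiting the $r$-dimensional family $\{(D_1,D_1+E_0):D_1\in A^{(r)}\}$ inside $\phi^{-1}(\mathcal{L})$.
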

\begin{proof}
Define $f:A^{(r)}\times A^{(g_A-1)}\longrightarrow A^{(r+g_A-1)}$, $g:A^{(r+g_A-1)}\longrightarrow Pic^{r+g_A-1}A$, where $f$ is the addition map and $g(D)=\mathcal{O}_{A}(D), \forall D\in A^{(r+g_A-1)}$.  Let $\psi$ be the composite map $g\circ f$. Consider the following commutative diagram:

\xymatrix{
 A^{(r)}\times A^{(g_A-1)} \ar @{.>}[d]_-{\cong}^-{\tau} \ar[r]^-\alpha & Pic^{r}A\times Pic^{g_A-1}A\ar[d]_-{\cong}^-\beta \ar[r]^-\gamma  &Pic^{r-g_A+1}A\ar[d]_-{\cong}^-\kappa \\
A^{(r)}\times A^{(g_A-1)} \ar@{=}[d] \ar[r]^-\alpha&  Pic^{r}A\times Pic^{g_A-1}A\ar[r]^-\sigma   &Pic^{r+g_A-1}A\ar@{=}[d]  \\
 A^{(r)}\times A^{(g_A-1)} \ar[r]^-f&  A^{(r+g_A-1)} \ar[r]^-g&  Pic^{r+g_A-1}A  }

\vspace{8mm}

where $\alpha (D_1,D_2)=(\mathcal{O}_A(D_1), \mathcal{O}_A(D_2)), \gamma (\mathcal{G}_1, \mathcal{G}_2)=\mathcal{G}_1\otimes\mathcal{G}_2^{\vee}, \tau (D_1, D_2)=(D_1, D_2^{'})$ if $dim|\mathcal{O}_A(D_2)|=0$ and $D_2^{'}\in |\omega_A\otimes \mathcal{O}_A(-D_2)|$, $\beta (\mathcal{G}_1, \mathcal{G}_2)=(\mathcal{G}_1, \omega _A\otimes\mathcal{G}_2^{\vee})$, $\kappa (\mathcal{G})=\omega_A\otimes\mathcal{G}$, $\sigma (\mathcal{G}_1, \mathcal{G}_2)=\mathcal{G}_1\otimes\mathcal{G}_2$ and $\omega_A$ is the canonical sheaf of $A$.

Then $\tau$ is a birational map, $\beta$ and $\kappa$ are isomorphisms. $\phi=\gamma \circ\alpha$, $\psi=\sigma\circ\alpha=g\circ f$. Let $\mathcal{G}\in Pic^{r-g_A+1}A$, then $\gamma^{-1}(\mathcal{G})\cong \sigma^{-1}(\kappa(\mathcal{G}))$. $\forall (\mathcal{G}_1, \mathcal{G}_2)\in \gamma^{-1}(\mathcal{G})$, $dim\alpha^{-1}(\mathcal{G}_1, \mathcal{G}_2)=dim\alpha^{-1}(\beta(\mathcal{G}_1, \mathcal{G}_2))$ by Riemann-Roch. Therefore, $dim\phi^{-1}(\mathcal{G})=dim\psi^{-1}(\kappa(\mathcal{G}))$.

On the other hand, $f$ is a finite surjective map,  so $dim\psi^{-1}(\kappa(\mathcal{G}))=dim g^{-1}(\kappa(\mathcal{G}))=^{R.R.}dim|\omega_A\otimes (\kappa (\mathcal{G}))^{\vee}|+(r+g_A-1)+1-g_A=dim|\omega_A\otimes (\kappa (\mathcal{G}))^{\vee}|+r$.  Therefore, the fiber $\phi^{-1}(\mathcal{G})$ has minimum dimension, say $r-1$, if and only if $dim|\omega_A\otimes (\kappa (\mathcal{G}))^{\vee}|=-1$, i.e. $h^0(A,\omega_A\otimes (\kappa (\mathcal{G}))^{\vee})=0$. But by the definition of the map $\kappa$, $\omega_A\otimes (\kappa (\mathcal{G}))^{\vee}\cong \mathcal{G}^{\vee}$,  so we are done.

\end{proof}

\newpage

\section{Tables for the proof of Theorem 2.9 and Theorem 2.12}

$$\text{Table 2}$$
 \begin{tabular}{|p{2.5cm}|p{3cm}|p{3cm}| p{2.5cm}|c|}
 \hline 
 ($g$, $d$) &$Y$  &$X$  & $D:=(a+b)H-C$ 
 & $D^2$\\
 \hline 
 (23, 18) & $(5)$ & $(3,2)$  &$5H-C$&14 \\
 \hline 
 (24, 19) & $(5)$ & $(3,2)$  &$5H-C$&6\\
 \hline

 (26, 20) & $(5)$ & $(3,2)$  &$5H-C$ & 0\\
 \hline 
 (27, 20) & $(5)$ & $(3,2)$  &$5H-C$ & 2\\
 \hline 
 (29, 21) & $(5)$ & $(3,2)$  &$5H-C$ & -4\\
 \hline 
 (16, 17) & $(2,4)$ & $(2,2,2)$  &$4H-C$ & 22 \\
 \hline 
 (16, 18) & $(2,4)$ & $(2,2,2)$  &$4H-C$ & 14 \\
 \hline 
 (16, 19) & $(2,4)$ & $(2,2,2)$  &$4H-C$ & 6 \\
 \hline 
 
 (17, 17) & $(2,4)$ & $(2,2,2)$  &$4H-C$ & 24\\
 \hline 
 (17, 18) & $(2,4)$ & $(2,2,2)$  &$4H-C$ & 16\\
 \hline 
 (17, 19) & $(2,4)$ & $(2,2,2)$  &$4H-C$ & 8 \\
 \hline 
 (17, 20) & $(2,4)$ & $(2,2,2)$  &$4H-C$ & 0\\
 \hline 
 
 (18, 20) & $(2,4)$ & $(2,2,2)$  &$4H-C$ & 2 \\
 \hline 
 (19, 18) & $(2,4)$ & $(2,2,2)$  &$4H-C$ & 20\\
 \hline 
 
 (19, 20) & $(2,4)$ & $(2,2,2)$  &$4H-C$ & 4 \\
 \hline 
 (19, 21) & $(2,4)$ & $(2,2,2)$  &$4H-C$ & -4\\
 \hline 
 (20, 19) & $(2,4)$ & $(2,2,2)$  &$4H-C$ & 14\\
 \hline 
 (20, 20) & $(2,4)$ & $(2,2,2)$  &$4H-C$ & 6 \\
 \hline
 
  (21, 20) & $(2,4)$ & $(2,2,2)$  &$4H-C$ & 8\\
 \hline 
 (21, 21) & $(2,4)$ & $(2,2,2)$  &$4H-C$ & 0 \\
 \hline
 (22, 20) & $(2,4)$ & $(2,2,2)$  &$4H-C$ & 10\\
 \hline 
 (22, 21) & $(2,4)$ & $(2,2,2)$  &$4H-C$ & 2 \\
 \hline
 (23, 20) & $(2,4)$ & $(2,2,2)$  &$4H-C$ & 12\\
 \hline 
 
 (23, 22) & $(2,4)$ & $(2,2,2)$  &$4H-C$ & -4\\
 \hline

 (25, 21) & $(2,4)$ & $(2,2,2)$  &$4H-C$ & 8 \\
 \hline
 (25, 22) & $(2,4)$ & $(2,2,2)$  &$4H-C$ & 0\\
 \hline
 (26, 22) & $(2,4)$ & $(2,2,2)$  &$4H-C$ & 2 \\
 \hline
 (27, 22) & $(2,4)$ & $(2,2,2)$  &$4H-C$ & 4 \\
 \hline
 (29, 23) & $(2,4)$ & $(2,2,2)$  &$4H-C$ & 0\\
 \hline
 (8, 12) & $(3,3)$ & $(3,2,1)$  &$3H-C$ & -4\\
 \hline 
 
 (11, 16) & $(2,2,3)$ & $(2,2,2,1)$  &$3H-C$ & -4\\
 \hline
 (4, 9) & $(2,2,2,2)$ & $(2,2,2,1,1)$  &$2H-C$ & 2\\
 \hline
 
 (5, 10) & $(2,2,2,2)$ & $(2,2,2,1,1)$  &$2H-C$ & 0\\
 \hline
 (5, 11) & $(2,2,2,2)$ & $(2,2,2,1,1)$  &$2H-C$ & -4\\
 \hline

 \end{tabular}

\begin{table}
    \begin{adjustwidth}{-0.5in}{-.5in}  
        \begin{center}
$$\text{Table 3}$$

\begin{tabular}{|c|c|c| p{2.5cm}| p{2.5cm}| p{2cm}| c|p{3cm}|}
\hline 
($g$, $d$) &$Y$  &$X$  & $\overline{NE(X)}$ spanned by & $Nef(X)$ spanned by&$D:=(a+b)H-C$ 
& $D^2$& reason for $h^1(X,\mathcal{O}_X(D))=0$\\
\hline 

(23,19) &  $(5)$ &$(3,2)$   &$-34451H+22588C$, $827H-172C$  & $-339303H+222466C$, $8145H-1694C$ &$5H-C$& 4& $D$ nef and big\\
\hline

(24,20) &  $(5)$ &$(3,2)$   &$-3H+2C$, $192H-37C$  & $-16H+11C$, $1069H-206C$ &$5H-C$& -4& Both $|D|$ and $|-D|$ are empty, then use R-R \\
\hline
(25,19) &  $(5)$ &$(3,2)$   & $-7H+4C$, $215743H-46996C$  & $-59H+34C$, $1843309H-401534C$ &$5H-C$& 8& $D$ nef and big\\
\hline 
(25,20) &  $(5)$ &$(3,2)$   & $-11H+7C$, $5H-C$  & $-37H+58C$, $5H-26C$ &$5H-C$& -2& $|D|$ contains a smooth rational curve.\\
\hline 

(16,20) &  $(2,4)$ &$(2,2,2)$   & $-H+C$, $4H-C$  & $-5H+6C$, $25H-6C$ &$4H-C$& -2& $|D|$ contains a smooth rational curve.\\
\hline 

(18,18) &  $(2,4)$ &$(2,2,2)$   & $-147H+109C$, $3H-C$  & $-530H+393C$, $10H-3C$ &$4H-C$& 18& $D$ nef and big\\
\hline 
(18,19) &  $(2,4)$ &$(2,2,2)$   & $-6H+5C$, $32006H-9005C$  & $-56H+47C$, $301944H-84953C$ &$4H-C$& 10& $D$ nef and big\\
\hline 

(19,19) &  $(2,4)$ &$(2,2,2)$   & $-17H+13C$, $66233H-19237C$  & $-145H+111C$, $565895H-164361C$ &$4H-C$& 12& $D$ nef and big\\
\hline 

(20,21) &  $(2,4)$ &$(2,2,2)$   & $-425540H+366241C$, $4H-C$  & $-4980818H+4286741C$, $46H-11C$ &$4H-C$& -2& $|D|$ contains a smooth rational curve.\\
\hline 
(21,19) &  $(2,4)$ &$(2,2,2)$   & $-707H+449C$, $3H-C$  & $-4527H+2875C$, $17H-5C$ &$4H-C$& 16& $D$ nef and big\\
\hline

(23,21) &  $(2,4)$ &$(2,2,2)$   & $-13019H+9005C$, $19H-5C$  & $-122821H+84953C$, $179H-47C$ &$4H-C$& 4& $D$ nef and big\\
\hline

\end{tabular}

      \end{center}
    \end{adjustwidth}
\end{table}

\begin{table}
    \begin{adjustwidth}{-1in}{-.5in}  
        \begin{center}
$$\text{Table 3 (continued)}$$

\begin{tabular}{|c|c|c|p{2.5cm}| p{2.5cm}| p{2cm}| c|p{3cm}|}
\hline 
($g$, $d$) &$Y$  &$X$  & $\overline{NE(X)}$ spanned by & $Nef(X)$ spanned by&$D:=(a+b)H-C$ 
& $D^2$& reason for $h^1(X,\mathcal{O}_X(D))=0$\\
\hline

(24,21) &  $(2,4)$ &$(2,2,2)$   & $-29952H+19237C$, $48H-13C$  & $-255910H+164361C$, $410H-111C$ &$4H-C$& 6& $D$ nef and big\\
\hline
(24,22) &  $(2,4)$ &$(2,2,2)$   & $-984H+701C$, $4H-C$  & $-5299H+3775C$, $21H-5C$ &$4H-C$& -2& $|D|$ contains a smooth rational curve.\\
\hline 
(27,23) &  $(2,4)$ &$(2,2,2)$   & $-26405H+17077C$, $21H-5C$  & $-280689H+181531C$, $223H-53C$ &$4H-C$& -4& Both $|D|$ and $|-D|$ are empty, then use R-R\\
\hline
(28,23) &  $(2,4)$ &$(2,2,2)$   & $-8899156H+5413465C$, $4H-C$  & $-87646522H+53316447C$, $38H-9C$ &$4H-C$& -2& $|D|$ contains a smooth rational curve.\\
\hline

(4,8) &  $(3,2,2)$ &$(3,2,1,1)$   & $-H+2C$, $2H-C$  & $-2H+5C$, $5H-2C$ &$2H-C$& -2& $|D|$ contains a smooth rational curve.\\
\hline 

(4,10) &  $(2,2,2,2)$ &$(2,2,2,1,1)$   & $-38H+109C$, $2H-C$  & $-137H+393C$, $7H-3C$ &$2H-C$& -2& $|D|$ contains a smooth rational curve.\\
\hline

(6,11) &  $(2,2,2,2)$ &$(2,2,2,1,1)$   & $-258H+449C$, $2H-C$  & $-1652H+2875C$, $12H-5C$ &$2H-C$& -2& $|D|$ contains a smooth rational curve.\\
\hline

\end{tabular}

      \end{center}
    \end{adjustwidth}
\end{table}

As pointed out before, the complete intersection K3 surfaces $X$ do not have -2 divisors in Table 2 but have -2 divisors in Table 3.

\newpage
\section{Summary for the existence of smooth isolated curves in general CICY threefolds known so far}

\vspace{3mm}

Figure 1. Existence of smooth isolated curves in general $Y=(5)\subset\mathbb{P}^4$
\vspace{3mm}

\includegraphics{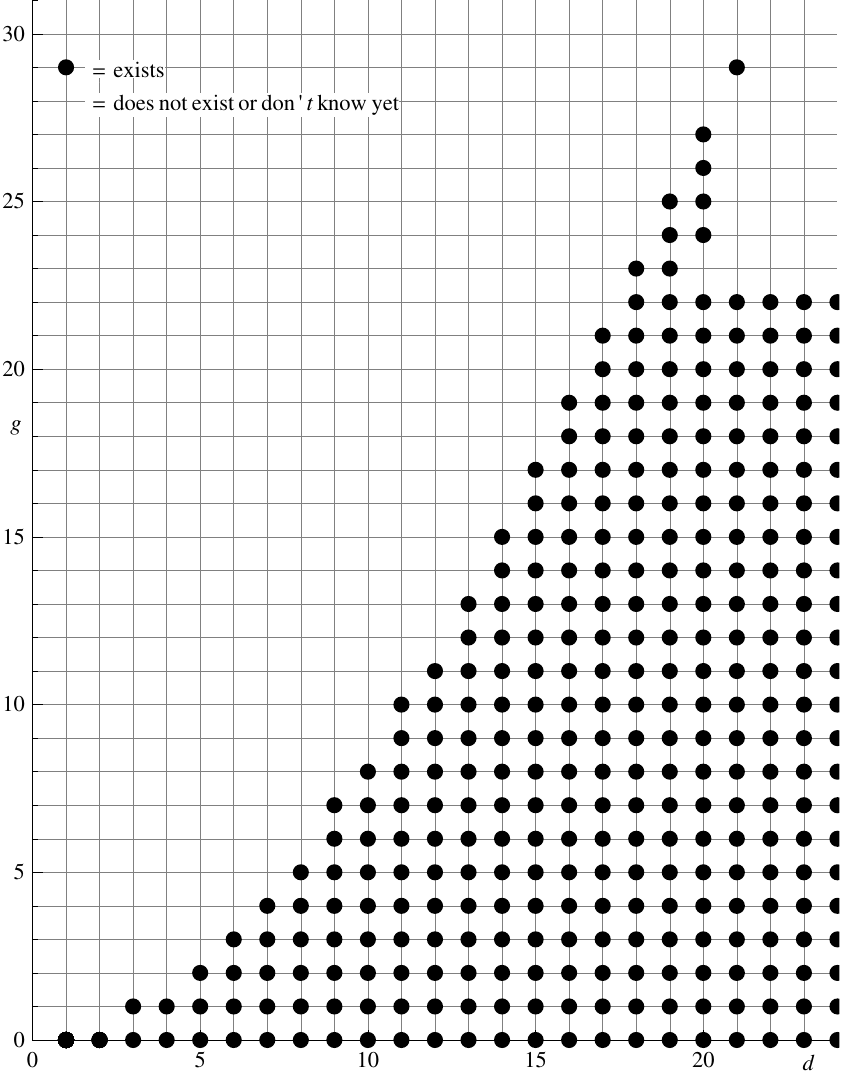}

Figure 2. Existence of smooth isolated curves in general $Y=(3,3)\subset\mathbb{P}^5$
\vspace{2mm}

\includegraphics{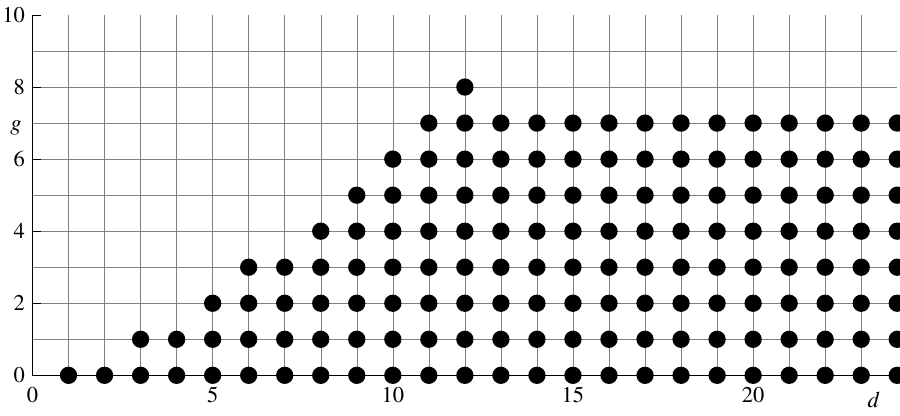}

Figure 3. Existence of smooth isolated curves in general $Y=(2,4)\subset\mathbb{P}^5$
\vspace{2mm}

\includegraphics{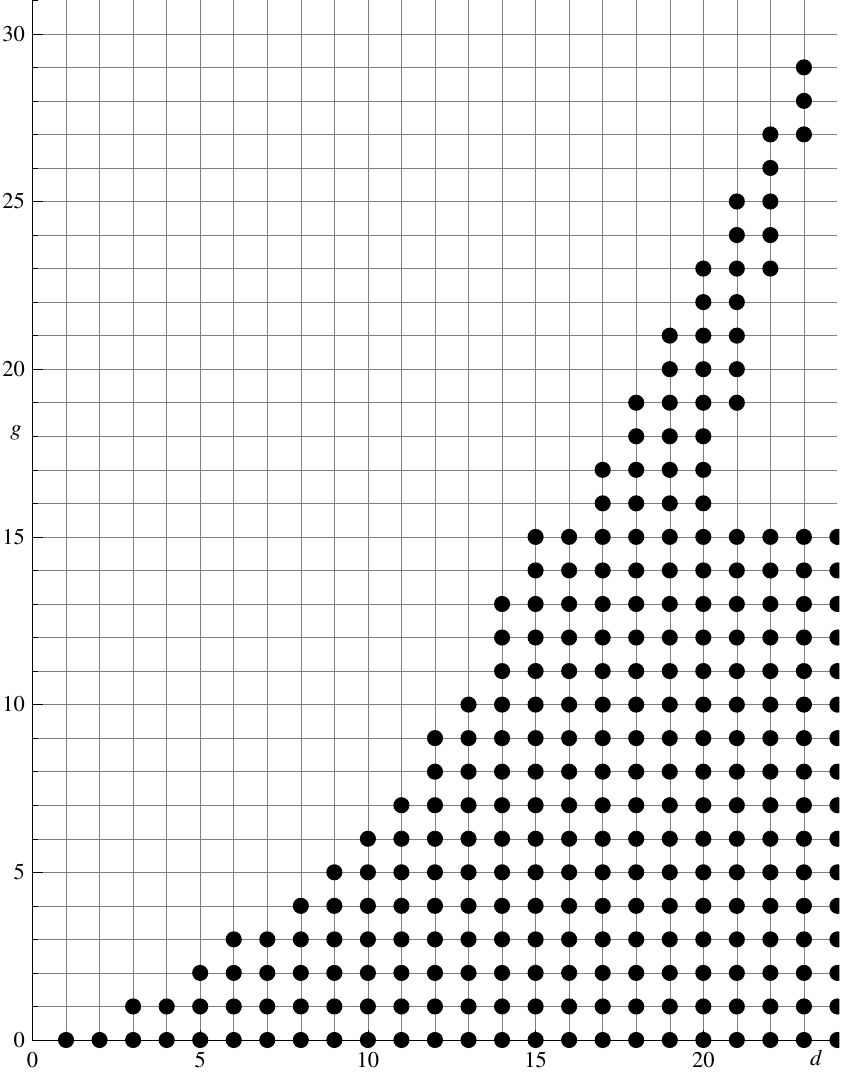}

Figure 4. Existence of smooth isolated curves in general $Y=(2,2,3)\subset\mathbb{P}^6$
\vspace{2mm}

\includegraphics{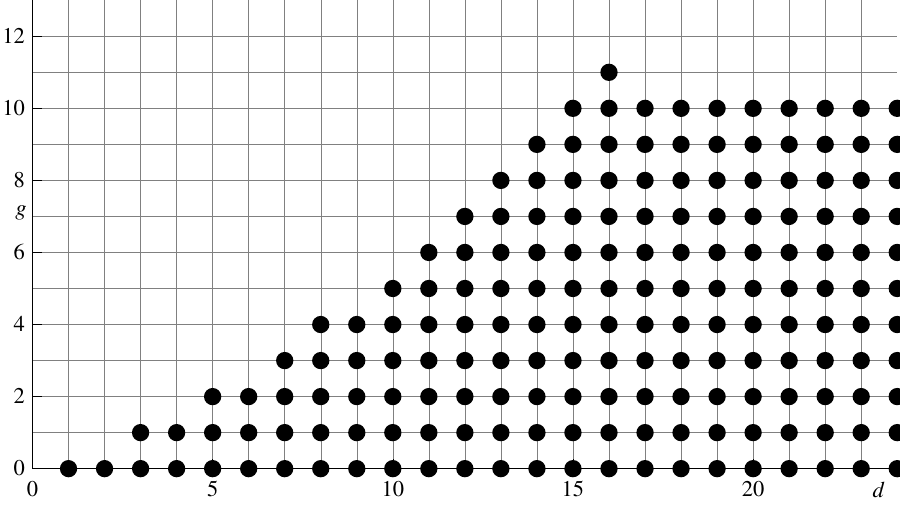}

\newpage
Figure 5. Existence of smooth isolated curves in general $Y=(2,2,2,2)\subset\mathbb{P}^7$
\vspace{2mm}

\includegraphics{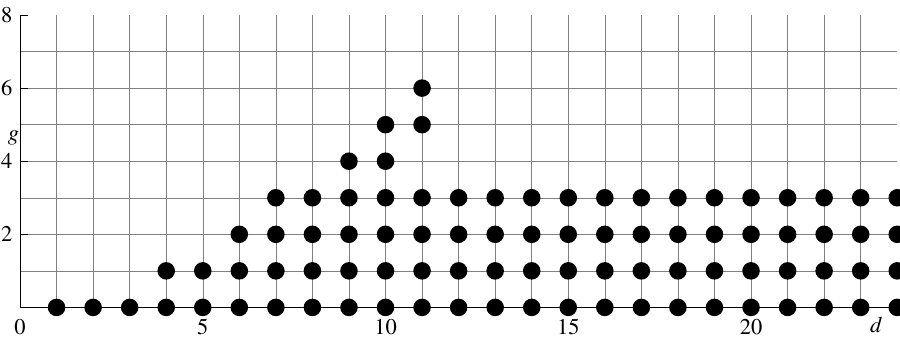}

\vspace{3mm}


\begin{thebibliography}{99}


\bibitem{1}
   E. Arbarello, M.Cornalba, P.A Griffiths,  \emph{Geometry of Algebraic Curves,Volume I}, Springer-Verlag, 1985.
\bibitem{2}
 W.Bath, K. Hulek, C. Peters, A. Van de Ven, \emph{Compact complex surfaces, Second Enlarged Edition}, Ergebnisse der Mathematik und ihrer Grenzgebiete.
3. Folge. A , 4. Springer-Verlag, Berlin, (2004).
\bibitem{3}
 H. Clemens, H.P. Kley, \emph{Counting curves that move with threefolds}, J. Algebraic Geom. 9 (2000), 175-200.

\bibitem{4}
R. Hartshorne, \emph{Algebraic Geometry}, Berlin; Springer 1977.
 \bibitem{5}
     A. L.  Knutsen, \emph{On isolated smooth curves of low genera in Calabi-Yau complete intersection threefolds},   Trans. Amer. Math. Soc. 364, 5243-5264 (2012).
\bibitem{6}
A. L. Knutsen, \emph{Smooth curves on projective K3 surfaces}, Math. Scand. 90 (2002), 215-231.
\bibitem{7}
H. P. Kley, \emph{Rigid curves in complete intersection Calabi-Yau threefolds}, Compos. Math. 123 (2000), 185-208.

\bibitem{8} 
S. J. Kov\'{a}cs, \emph{The cone of curves of a K3 surface}, Math. Ann. 300 (1994), no.4, 681-691.

\bibitem{9}
R. Lazarsfeld, \emph{Positivity in Algebraic Geometry I}, Springer-Verlag, 2004
\bibitem{10} 
B.Saint-Donat, \emph{Projective models of K-3 surfaces}, Amer. J. Math. 96 (1974), 602-639.



   \end{thebibliography}
\end{document}